\providecommand{\R}{}
\providecommand{\Z}{}
\renewcommand{\R}{\mathbb{R}}
\renewcommand{\Z}{\mathbb{Z}}
\newcommand{\E}[1]{{\mathbf E}\left[#1\right]}
\newcommand{\e}{{\mathbf E}}
\newcommand{\V}[1]{{\mathbf{Var}}\left\{#1\right\}}
\newcommand{\p}[1]{{\mathbf P}\left\{#1\right\}}
\newcommand{\psub}[2]{{\mathbf P}_{#1}\left\{#2\right\}}
\newcommand{\I}[1]{{\mathbf 1}_{[#1]}}
\newcommand{\set}[1]{\left\{ #1 \right\}}
\newcommand{\Cprob}[2]{\mathbf{P}\set{\left. #1 \; \right| \; #2}}
\newcommand{\probC}[2]{\mathbf{P}\set{#1 \; \left|  \; #2 \right. }}
 \newcommand{\bag}{\begin{align}}
\newcommand{\bags}{\begin{align*}}
\newcommand{\eag}{\end{align*}}
\newcommand{\eags}{\end{align*}}
\newtheorem{thm}{Theorem}[section]
\newtheorem{lem}[thm]{Lemma}
\newtheorem{prop}[thm]{Proposition}
\newtheorem{cor}[thm]{Corollary}
\newtheorem{fact}[thm]{Fact}
\newcommand\cF{\mathcal F}
\newcommand{\bM}{\mathbf{M}}
\newcommand{\bX}{\mathbf{X}}
\newcommand{\pran}[1]{\left(#1\right)}
\providecommand{\eps}{}
\renewcommand{\eps}{\epsilon}
\providecommand{\ora}[1]{}
\renewcommand{\ora}[1]{\overrightarrow{#1}}
\newcommand{\eqdist}{\ensuremath{\stackrel{\mathrm{d}}{=}}}
\newcommand{\pst}{\ensuremath{\preceq_{\mathrm{st}}}}
\definecolor{clou}{rgb}{0.8,0.25,0.5125}
\newcommand{\bx}{\mathbf{x}}
\newcommand{\bm}{\mathbf{m}}
\begin{document}

\title{The front location in BBM with decay of mass} 
\author{Louigi Addario-Berry}
\author{Sarah Penington}
\address{Department of Mathematics and Statistics, McGill University, 805 Sherbrooke Street West, 
		Montr\'eal, Qu\'ebec, H3A 2K6, Canada}
\address{Department of Statistics, University of Oxford, 24-29 St Giles', Oxford, OX1 3LB, UK}

\email{louigi@problab.ca}
\email{sarah.penington@sjc.ox.ac.uk}
\date{December 9, 2015} 

\begin{abstract} 
We augment standard branching Brownian motion by adding a competitive interaction between nearby particles. Informally, when particles are in competition, the local resources are insufficient to cover the energetic cost of motion, so the particles' masses decay. In standard BBM, we may define the {\em front displacement} at time $t$ as the greatest distance of a particle from the origin. For the model with masses, it makes sense to instead define the front displacement as the distance at which the local mass density drops from $\Theta(1)$ to $o(1)$. We show that one can find arbitrarily large times $t$ for which this occurs at a distance $\Theta(t^{1/3})$ behind the front displacement for standard BBM. 
\end{abstract}

\maketitle

\section{Introduction}\label{sec:intro}
In this work, we propose a mathematical model of competition for resources within a single species, in a growing, spatially structured population, and provide an initial study of the front location in this new setting.
The model is essentially standard one-dimensional branching Brownian motion (BBM), augmented with a destructive, local interaction between particles. We first briefly recall BBM: start from a single particle at a point in $\mathbb{R}$, endowed with an Exp$(1)$ ``branching clock''. The particle moves according to Brownian motion; when its clock rings, it splits in two (branches). The new particles receive independent Exp$(1)$ clocks, and move independently (according to Brownian motion) starting from where the first particle splits, until their own clocks ring and they in turn split, {\em et cetera}. 

Write $n(t)$ for the total number of particles at time $t$, and $\bX(t)= (X_i(t),1 \le i \le n(t))$ for the locations of such particles. 
We assume the particles are listed in a way that makes the vector $\bX(t)$ exchangeable; one possible formalism is via the Ulam-Harris tree, with particles listed lexicographically acording to their label in the tree. We refer the reader to \cite{harris14} for more details on such matters; but many different references are possible. 
We also write $N(t,x) = \{i:X_i(t) \ge x\}$ for the indices of particles with position greater than $x$ at time $t$. 

We sometimes write $(X_i(t),i \ge 1)$, ignoring the fact that $\bX(t)$ has finite length, for convenience. 
We adopt the convention that $X_k(t)=\partial$ for $k > n(t)$ (so $\partial$ is where new babies come from). We refer to ``the particle $X_i(t)$'' as shorthand for ``the particle with position $X_i(t)$ at time $t$''; this is unambiguous at $\mathrm{Leb}$-a.e.\ time $t$. We write $\mathbf{P}$ for the probability measure under which $(\bX(t),t \ge 0)$ has the law of one dimensional BBM with initial individual at $0$, $\mathbf{E}$ for the corresponding expectation, and $(\cF_t,t \ge 0)$ for the filtration generated by the process. 

We now add destructive interaction as follows. Informally, imagine that the particles are, say, amoeba. Motion has an energetic cost, but for a single particle in isolation, this cost is exactly accounted for by the resources available in the environment. When particles are nearby, however -- at distance less than one, say -- they must share resources; in this case individuals do not consume enough to meet their energy expenditure, and their mass decreases. Finally, larger (more massive) individuals consume resources at a greater rate.

Formally, we define a vector $\bM(t)=(M_i(t),i \ge 0)$, and call $M_i(t)$ the {\em mass} of particle $X_i(t)$. 
By convention, if $X_i(t)=\partial$ then $M_i(t)=0$. Write 
\[
\zeta(t,x) = \sum_{\{i:|X_i(t)-x| \in (0,1)\}} M_i(s)
\]
for the total mass of particles within distance one of $x$ at time $t$, excluding any particles at position $x$. 
Then at time $t$, $M_i(t)$ decays at rate $\zeta(t,X_i(t))$. In other words, 
$\mathrm{d} M_i(t) = -M_i(t)\cdot \zeta(t,X_i(t))\mathrm{d}t$, so 
\[
M_i(t) = \exp(-\int_0^t \zeta(s,X_i(s)) \mathrm{d}s)\, .
\]
This should be viewed as defining $(\bM(t),t \ge 0)$ to be the solution of a system of differential equations; the definition makes sense since the system has a unique solution $\mathbf{P}$-almost surely. Furthermore, the process $(\bM(t),t \ge 0)$ is clearly $\cF_t$-adapted. 

For later use, write $(X_{i,t}(s),0 \le s \le t)$ for the ancestral path leading to $X_i(t)$, and let $\sigma_i(t)$ be the final branching time along this path. Also, let $j_{i,t}(s)$ be the index of $X_{i,t}(s)$ among the time-$s$ population, so that $X_{i,t}(s) = X_{j_{i,t}(s)}(s)$. 
We also write $M_{i,t}(s)$ for the mass of the ancestor of $X_i(t)$ at time $s$ (so $M_{i,t}(s)=M_{j_{i,t}(s)}(s)$).

Note that along any given trajectory, mass decreases: $(M_{i,t}(s),0 \le s \le t)$ is decreasing in $s$ for each $1 \le i \le n(t)$. Mass enters the system through branching events, since each ``child particle'' inherits the mass of its parent. This is obviously physically unrealistic in some settings (e.g.\ for amoebae) but may be more realistic in others (e.g.\ in nuclear physics). 

Rather than viewing $M_i(t)$ as a mass, a perspective suggested by a referee is to view $(M_{i,t}(s),0 \le s \le t)$ as recording information about the local density of the environment observed along the ancestral trajectory of the particle $X_i(t)$. The interaction between the dynamics of $\bX(t)$ and $\bM(t)$ makes this point of view slightly complicated to interpret, but here is one possibility.  Imagine adding destructive interaction to a BBM, as follows: whenever two different particles are at distance less than $1$, each kills the other at rate one. Record such a killing event as a mark at the appropriate location of the BBM family tree. Particles with a mark on their ancestral trajectory are {\em ghosts}, which continue to move and reproduce as before, but can no longer kill other particles.  Given the BBM but not the marks, one may ask for the conditional survival probabilities $p_i(t) = \probC{X_i(t)\mbox{ is alive}}{\cF_t}$ of the particles. The vector $\bM(t)$ is a ``linearized'' version of the vector of these survival probabilities.


\subsection{Main result}
Write 
\[
d(t,m) = \min\{x > 0: \zeta(t,x) < m\}, \quad D(t,m) = \max\{x: \zeta(t,x) > m\},
\]
for the leftmost (positive) location at which the total mass of nearby particles falls below $m$, and
the rightmost location at which it exceeds $m$, respectively. 
We prove the following theorem. 
\begin{thm}\label{thm:main}
Write 
$c^*=3^{4/3}\pi^{2/3}/2^{7/6}$. 
Then almost surely, for all $m < 1$, 
\[
\limsup_{t \to \infty} \frac{\sqrt{2}t-d(t,m)}{t^{1/3}} \ge c^* \quad\mbox{and}\quad 
\liminf_{t \to \infty} \frac{\sqrt{2}t-D(t,m)}{t^{1/3}} \le c^*
\]
\end{thm}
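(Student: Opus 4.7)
The constant $c^*=3^{4/3}\pi^{2/3}/2^{7/6}$ is the critical value in the celebrated dichotomy (Berestycki--Berestycki--Schweinsberg, A\"{\i}d\'{e}kon--Jaffuel) for BBM absorbed at the space-time curve $\gamma_c(s)=\sqrt{2}s-cs^{1/3}$: for $c>c^*$ the absorbed process survives with positive probability, while for $c<c^*$ it dies out almost surely.  The heuristic linking this dichotomy to \refT{thm:main} is that a BBM particle still carrying non-negligible mass at time $t$ must have had its ancestor mostly avoid regions of high mass density; the critical curve $\gamma_{c^*}$ marks the natural boundary between the dense ``bulk'' regime and the sparse ``front'' regime.

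\emph{The $\liminf$ inequality.}  Fix $\delta>0$ and consider $t_n=2^n$.  For each $n$ I would produce, with probability bounded below in $n$, a particle $X_{i_n}(t_n)$ near position $\sqrt{2}t_n-(c^*+\delta)t_n^{1/3}$ whose ancestral trajectory stays above $\gamma_{c^*+\delta/2}$ on $[s_0,t_n]$ (with $s_0$ a large fixed constant).  Since $c^*+\delta/2>c^*$, the first-moment estimate for the number of such particles---obtained via many-to-one and a Girsanov shift, reducing to a Brownian ballot estimate against an $s^{1/3}$ barrier with Airy-type asymptotics---is large enough that a second-moment argument yields such a particle with uniform positive probability.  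I would then control its mass $M_{i_n}(t_n)=\exp\bigl(-\int_0^{t_n}\zeta(s,X_{i_n,t_n}(s))\,ds\bigr)$ via a spine-type decomposition, bounding the expected integrand at time $s$ by the expected mass of cousins within distance one of $X_{i_n,t_n}(s)$ and showing this bound is integrable in $s$.  This yields $M_{i_n}(t_n)\ge m$ with probability bounded below, and a Borel--Cantelli argument across the fresh BBM growth windows $[t_n,t_{n+1}]$ upgrades this to the almost sure statement.

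\emph{The $\limsup$ inequality.}  The subcritical side of the dichotomy ($c<c^*$) says that with probability $1-o(1)$, no BBM particle at time $t_n$ has had its ancestor stay above $\gamma_{c^*-\delta/2}$ throughout $[s_0,t_n]$; hence any particle currently beyond $\sqrt{2}t_n-(c^*-\delta)t_n^{1/3}$ must have at some intermediate time $s$ lived below this curve, i.e.\ in the bulk, and lost most of its mass.  A first-moment bound restricted to such trajectories, again obtained through many-to-one, shows the expected total mass beyond $\sqrt{2}t_n-(c^*-\delta)t_n^{1/3}$ tends to $0$; Markov's inequality then gives $\zeta(t_n,x)<m$ for all $x$ beyond this threshold with high probability, and Borel--Cantelli closes the argument.

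The principal obstacle is the mass-integral estimate in the $\liminf$ direction.  The naive particle-count upper bound $\zeta(s,y)\le\#\{\text{particles within }1\text{ of }y\}$ gives a many-to-one first moment of order $e^{\sqrt{2}c^*s^{1/3}}/\sqrt{s}$, whose integral over $[0,t]$ diverges; the argument must instead use that most cousins have themselves already lost essentially all of their mass by time $s$, which turns the estimate into a self-consistent bound on the mass process.  Resolving this self-reference---most likely by induction on nested time scales, coupled with Airy-type tail estimates for Brownian motion above an $s^{1/3}$ barrier---is the principal technical content of the proof.
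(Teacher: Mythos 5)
Your proposal correctly identifies the governing scale ($t^{1/3}$, constant $c^*$), the role of consistent maximal displacement, and the general shape of both halves (supercritical survival above the curve for $\liminf$, subcritical failure for $\limsup$), but in both directions it differs from the paper in ways that leave genuine gaps.

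For the $\liminf$ direction, you propose to bound the surviving particle's mass $M_{i_n}(t_n)=\exp(-\int_0^{t_n}\zeta(s,X_{i_n,t_n}(s))\,ds)$ by directly estimating the expected integrand via the expected mass of cousins, and you yourself identify the fatal obstruction: this is self-referential, and the naive particle-count bound diverges. The paper does not attempt to resolve this self-reference. Instead it sidesteps it entirely with a dichotomy. It first proves (Proposition~\ref{prop:massbound}, a substantial eight-page effort) the non-referential a.s.\ bound $\sup_x\zeta(s,x)\le Z\log s$ eventually, which controls the integrand for the short initial window $s\le t^{1/4}$. Then, letting $\sigma=\inf\{s\ge t^{1/4}: D(s,1/t)\ge g(s)-C^*-1\}$, it argues: if $\sigma\le t$, one already has a high-density point far to the right (and Lemma~\ref{lem:mass_to_one} grows that into density $\ge m$); if $\sigma>t$, the surviving particle's mass integral is controlled deterministically because $\zeta(s,X_{i^*,t}(s))\le 1/t$ for $s\in[t^{1/4},t]$. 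Either branch closes the argument without ever estimating the expected mass of cousins. Note also that even if you could show $M_{i_n}(t_n)\ge m$, that does not give $D(t,m)$ large, since $\zeta$ excludes the particle's own mass; the paper's Lemma~\ref{lem:mass_to_one} is needed to convert a single particle of non-negligible mass into a nearby region of density at least $m$. Your ``Borel--Cantelli across growth windows'' does not obviously supply this.

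For the $\limsup$ direction, ``any particle beyond the curve must have at some time lived below it, and lost most of its mass'' is qualitatively right but not quantitatively adequate: a particle that dips below the curve for an instant loses essentially no mass. The paper's Proposition~\ref{surf_prop} establishes the much stronger quantitative fact that, with probability $\ge 1-e^{-\delta t^{1/3}}$, \emph{every} particle at time $t$ spends at least $Ct^{1/3}$ time below $g$, and it is precisely this linear-in-$t^{1/3}$ occupation time that makes $e^{-mCt^{1/3}/2}$ small enough to beat $|N(t,g(t))|=e^{O(t^{1/3})}$. Proving this occupation-time estimate (via Brownian tube estimates of Roberts and a Jaffuel-style solution of an integral equation) is where most of the work goes. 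You also need the explicit contradiction structure the paper uses: one assumes $d(s,m)>g(s)+1$ for all $s\in[Ct^{1/3}/2,t]$ and uses \emph{that assumption} to bound each particle's mass, then a many-to-one bound on particle counts gives the contradiction. Without the contradiction assumption, the mass is not a trajectory-only functional and cannot be fed directly into a many-to-one computation.
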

A well-known result of \citet{bramson78} states that the rightmost particle location $\max_{i \ge 1} X_{i}(t)$ has median $\mathrm{med}(t)$ satisfying 
\[
\mathrm{med}(t) = \sqrt{2}t - \frac{3}{2^{3/2}} \log t + O(1). 
\]
Furthermore, it turns out \cite{hu09min} that $|\max_{i \ge 1} X_i(t) - \mathrm{med}(t)|$ is almost surely $O(\log t)$, in that $\limsup_{t \to \infty} |\max_{i \ge 1} X_i(t) - \mathrm{med}(t)|/\log t$ is a.s.\ finite. 
In view of this, the theorem states that (1) there are (large) times $t$ at which the first low-density region lags at least distance $c^* t^{1/3}+o(t^{1/3})$ behind the rightmost particle, and (2) there are also (potentially different, large) times $t$ at which there is some high-density region within distance $c^* t^{1/3}+o(t^{1/3})$ of the rightmost particle. 

We believe that in fact almost surely, for all $m < 1$,  
\[
\lim_{t \to \infty} \frac{\sqrt{2}t-d(t,m)}{t^{1/3}} = c^* = \lim_{t \to \infty} \frac{\sqrt{2}t-D(t,m)}{t^{1/3}}\, .
\]
If this is correct, then the front could equivalently be defined as, e.g., a median of $D(t,m)$ or $d(t,m)$ -- or any other fixed quantile of one of these random variables. We provide some justification for our belief in Section~\ref{sec:conc}. That section also contains a few open questions about the model and a discussion of various generalizations of our results (some straightforward, some conjectural), as well as describing variants of the model which have thus far resisted analysis. 

\section{Proof sketch}
Here comes an outline of the key tools in our argument. The first is technical but important and also, we believe, provides important intuition when making heuristic predictions about the behaviour of the process. The remainder gives a fairly detailed overview of the proof. 

\medskip \noindent
{\bf Density self-correction.} It is not hard to see that when $\zeta(t,x)$ is small (much less than one), and this also holds in a region around $x$, then $\zeta(t,\cdot)$ will exhibit exponential growth near $x$, at least for a short time. 
Indeed, we heuristically have 
\[
\frac{\mathrm{d}}{\mathrm{d} t} \zeta(t,x) \approx \zeta(t,x) - \sum_{\{i: |X_i(t)-x| \in (0,1)\}} M_i(t) \cdot \zeta(t,X_i(t))
\, .
\]
This is not exactly correct since it ignores the effect of motion (particles may enter or leave the region near $x$), but it is a useful first approximation. In particular, it suggests that if $\zeta(t,y)$ is small (much less than one) for all $y$ with $|y-x| < 1$, then $\zeta(t,\cdot)$ will exhibit exponential growth near $x$, at least for a short time. This is indeed true; one important consequence is that if $\zeta(t,x) = \eps$ and $\zeta(t,\cdot)$ is not too wild then it is very likely that $\zeta(t',x) = \Theta(1)$ for some $t' = t+\Theta(\log(1/\eps))$. Similarly, when $\zeta(t,y)$ is much larger than $1$ for $y$ near $x$ then $\zeta(t,x)$ will decrease exponentially quickly. We use the self-correcting nature of the density in several places throughout the paper.

As an aside, we remark that if $\zeta(t,y) \approx \zeta(t,x)$ for $|y-x| < 1$ then the above heuristic gives 
$\frac{\mathrm{d}}{\mathrm{d} t} \zeta(t,x) \approx \zeta(t,x) (1-\zeta(t,x))$, which is suggestive of the logistic control; we briefly revisit this connection in the conclusion. 

\medskip \noindent
{\bf Population + no competitors=mass.} Fix $\beta > 0$ and suppose that for some function $f:[0,\infty) \to \R$, for all $s \in [0,t]$, $D(s,\beta) \le f(s)$, or in other words $\zeta(s,x) \le \beta$ for all $x \ge f(s)$. In this case, particles that stay ahead of the moving barrier $f$ are in a relatively sparse environment, so do not lose mass too quickly. 
More precisely, if $X_i(t)$ satisfies $X_{i,t}(s) \ge f(s)$ for all $s \in [0,t]$ then $M_i(t) \ge e^{-\beta t}$. 
It follows that for any $x \ge f(t)+1$, 
\[
\zeta(t,x) \ge e^{-\beta t} \cdot \# \{i: |X_i(t) - x| < 1, \forall s \in [0,t], X_{i,t}(s)> f(s)\}\, .
\]
For such $x$, if $\# \{i: |X_i(t) - x| < 1, \forall s \in [0,t], X_{i,t}(s)> f(s)\} > \beta e^{\beta t}$ then $\zeta(t,x) \ge \beta$, contradicting the assumption that $D(t,\beta) \le f(t)$. 

\medskip \noindent
{\bf Surfing the wave.} 
To exploit the above contradiction, we require that with high probability there are many particles staying ahead of some barrier. Such results are available: it  follows fairly straightforwardly from recent studies of {\em consistent maximal displacement} for BBM \cite{matt_cmd} that for $c>c^*$, for all large times $t$ there are $e^{\Theta(t^{1/3})}$ particles at time $t$ with which have stayed ahead of the curve $f(s)=\sqrt{2}s - cs^{1/3}$. This allows us to take $\beta=t^{-1}$ above and obtain that there is $s \in [0,t]$ and $x \ge f(s)$ such that $\zeta(s,x) \ge t^{-1}$. Since the local density grows exponentially in regions with small density, we will with high probability find $s'$ with $\zeta(s',x) > b > 0$ and $s'-s=O(\log t)$. For such $s'$ we have $f(s')=f(s) +O(\log  t)$ so $x \ge \sqrt{2}s' - c(s')^{1/3} - O(\log t)$ so with high probability $d(s',b) \ge \sqrt{2}s' - c(s')^{1/3} - O(\log t)$. 

The lower bound is practically complete, but we must rule out the possibility that $s'=O(1)$ for all $t$. To do so, we first establish that 
\[
\sup_{t >0} \frac{\max\{\zeta(t,x),x \in \R\}}{\log(t+2)}  =: Z <\infty \quad\mbox{almost surely}\, .
\]
Proving this is harder than might be expected; its proof, given in Section~\ref{sec:max_density}, occupies 8 pages and is perhaps the most technically challenging part of the paper. 

Once we prove that $Z < \infty$, we then reprise the above argument, but with a variable mass bound 
\[
\beta=\beta(s)=\begin{cases}
				Z\log(s+2)& \mbox{ for } s \le t^{1/4} \\
				t^{-1}	& \mbox{ for } s \in (t^{1/4},t]\, .
			\end{cases}
\]
The loss of mass before time $t^{1/4}$ is insignificant compared with that which follows, so essentially the same argument as above yields that there is $s \in [0,t]$ and $x \ge f(s)$ such that $\zeta(s,x) \ge \beta(s)$. On the other hand, this can not happen for $s < t^{1/4}$ by the definition of $Z$, so it must happen later. This is enough to conclude the lower bound. The details of this argument appear in Section~\ref{sec:lower}. 

\medskip \noindent
{\bf Competition implies decay.} 
For the upper bound, given in Section~\ref{sec:ub} (with some technical lemmas deferred to an appendix), we invert the above argument by contradiction. 
In brief: if all particles to the right of a given curve have spent large amounts of time in high-mass environments, then all such individuals will have very low weight; if furthermore there are not many of them, then their total weight is also small. 

More precisely, suppose that for some $C > 0$ and some function $g:[0,\infty) \to \R$, for all $s \in [Ct^{1/3}/2,t]$, we have $d(s,m) \ge g(s)$, so $\zeta(s,x) \ge m$ for all $x$ with $x\in [0,g(s)]$. 
Then for all $i$, 
\[
M_i(t) \le \exp(-m\cdot \mathrm{Leb}(s \in [Ct^{1/3}/2,t]: |X_{i,t}(s)| \in [0,g(s)]))\, .
\]
It follows that if all particles with $X_i(t) \ge g(t)$ have $\mathrm{Leb}(s \in [Ct^{1/3}/2,t]: |X_{i,t}(s)| \in [0,g(s)]) \ge \ell$ then 
for all $x \ge g(t)+1$, recalling the notation $N(t,x)$ from the introduction, 
\[
\zeta(t,x) \le e^{-m \ell} \cdot |N(t,g(t))|\, .
\]
If $|N(t,g(t))| \le m e^{m \ell}$, this is in contradiction with the assumption that $d(s,m) \ge g(s)$. 

\medskip \noindent
{\bf Whitecaps are just foam.} 
Once again using estimates related to consistent maximal displacement for BBM, we show that for $c<c^*$, with $g(s)=\sqrt{2}s-c s^{1/3}$, with high probability every particle with $X_i(t)> g(t)$ indeed spends at least a time $Ct^{1/3}$ behind the curve $g$. Under the above assumption, it follows that the particles counted by $N(t,g(t))$ are as insubstantial as sea spray; for all $x \ge g(t)+1$, 
\[
\zeta(t,x) \le e^{-mCt^{1/3}/2}\cdot \# \{i: |X_i(t) - x| < 1\} \le e^{-mCt^{1/3}/2} \cdot |N(t,g(t))|. 
\] 
Standard and simple arguments for BBM show that $|N(t,g(t))| = e^{O(t^{1/3})}$ with high probability, so we obtain a contradiction for large $t$ if $C$ is sufficiently large. It follows that with high probability there is $s \in [Ct^{1/3}/2,t]$ and $x\in [0,g(s)]$ such that $\zeta(s,x) <m$; for such $s$ we have $d(s,m) \le g(s)$. This is the content of Proposition~\ref{surf_prop}. 

\section*{Definitions}
We sometimes need to consider the evolution of a subset of the particles starting at a time greater than zero, so it is useful to allow initial conditions other than a single mass-one particle at the origin. Generally, for $\bx=(x_1,\ldots,x_k) \in \R^k$ and $\bm=(m_1,\ldots,m_k) \in (0,\infty)^k$, we write $\mathbf{P}_{\bx,\bm}$ for the probability measure corresponding to an initial condition 
with a particle of mass $m_i$ at location $x_i$ for each $1 \le i \le k$. We write $\mathbf{P}=\mathbf{P}_{(0),(1)}$ for the default initial condition. 

We say a random variable $X$ is {\em geometric with parameter $p$}, or is $\mathrm{Geom}(p)$-distributed, if $\p{X=k}=(1-p)^{k-1} p$ for positive integer $p$. 

\section{Upper bound}\label{sec:ub}

Recall from the introduction that $c^* = 3^{4/3}\pi ^{2/3}2^{-7/6}$. The next proposition is a restatement of the upper bound from Theorem~\ref{thm:main}. 

\begin{prop} \label{upper_bound}
For any $m>0$, almost surely
\[
\limsup _{t\rightarrow \infty} \frac{\sqrt 2 t-d(t,m)}{t^{1/3}}\geq c^*. 
\]
\end{prop}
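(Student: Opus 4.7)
The plan is to prove the statement by contradiction, following the ``competition implies decay'' strategy from the proof sketch in Section~2. Fix $m > 0$ and $c < c^*$; it suffices to show that almost surely there exist arbitrarily large $s$ with $d(s, m) \le g(s)$, where $g(s) := \sqrt{2} s - c s^{1/3}$. Indeed, $d(s,m) \le g(s)$ means $\sqrt{2}s - d(s,m) \ge c s^{1/3}$, so a.s.\ $\limsup_{s\to\infty}(\sqrt{2}s - d(s,m))/s^{1/3} \ge c$, and letting $c \uparrow c^*$ along a countable sequence completes the proof.

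Fix a large time $t$ and a constant $C = C(c,m)$ to be chosen. Suppose for contradiction that $d(s, m) \ge g(s)$ for every $s \in [Ct^{1/3}/2, t]$; then $\zeta(s, x) \ge m$ for every such $s$ and every $x \in [0, g(s)]$. The first task is to show that, with high probability, every particle $X_i(t)$ satisfying $X_i(t) \ge g(t)$ has an ancestral path $(X_{i,t}(s), 0 \le s \le t)$ whose Lebesgue-measure of times $s \in [Ct^{1/3}/2, t]$ with $X_{i,t}(s) \in [0, g(s)]$ is at least $Ct^{1/3}$. This is the consistent-maximal-displacement-type input sketched in Section~2: for $c < c^*$ the target $g(t)$ is above the natural consistent-minimum curve, forcing any trajectory terminating near $g(t)$ to follow the characteristic bridge profile and therefore to dip well below $g(s)$ over a window of length $\Omega(t^{1/3})$. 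Combining this with the contradictory hypothesis and integrating the mass ODE $\mathrm{d}M_i(s)/\mathrm{d}s = -M_i(s)\zeta(s, X_{i,t}(s))$ gives $M_i(t) \le \exp(-mCt^{1/3}/2)$ for every such $i$. Hence for every $x \ge g(t) + 1$,
\[
\zeta(t, x) \le e^{-mCt^{1/3}/2} \cdot |N(t, g(t))|.
\]
A standard first-moment (many-to-one) bound for BBM yields $|N(t, g(t))| \le e^{K t^{1/3}}$ with high probability for an explicit constant $K = K(c)$. Choosing $C$ large enough that $mC/2 > K$ gives $\zeta(t, x) \to 0$ uniformly in $x \ge g(t) + 1$, contradicting $d(t, m) \ge g(t)$ once $t$ is large.

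To upgrade the contradiction to an almost-sure statement, I would show that the bad event (that $d(s, m) \ge g(s)$ holds throughout $s \in [Ct^{1/3}/2, t]$) has probability decaying at least polynomially in $t$, and then apply Borel--Cantelli along the geometric sequence $t_k = 2^k$: in each interval some witness $s_k$ must exist with $d(s_k, m) < g(s_k)$, producing a sequence $s_k \to \infty$ as required. The main obstacle is the consistent-maximal-displacement step: one needs a uniform statement that \emph{every} particle reaching above $g(t)$ has ancestry lingering below $g$ for time $\Omega(t^{1/3})$, not merely a typical one. Establishing this requires a union bound over possible last-exit times from $\{x \le g(s)\}$ together with ballot-style estimates for Brownian bridges above curved barriers, and it is precisely here that the sharp constant $c^* = 3^{4/3}\pi^{2/3}/2^{7/6}$ enters, via the Airy/cube-root asymptotics for exit probabilities from parabolic barriers. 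The complementary BBM count bound $|N(t, g(t))| \le e^{Kt^{1/3}}$ is comparatively routine. The whole argument is encoded in Proposition~\ref{surf_prop} referenced in the sketch, and I expect the implementation to occupy the bulk of Section~\ref{sec:ub}.
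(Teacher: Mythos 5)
Your proposal matches the paper's strategy closely: the key lemma is exactly Proposition~\ref{surf_prop} (``no one can surf $g$''), the mass-decay bound and the first-moment count on $|N(t,g(t))|$ are combined just as you describe, and the constant $C=C(c,m)$ is chosen to produce the contradiction. Two small points you have not addressed. First, because $d(t,m)$ is defined via $\{x>0\}$, the hypothesis $d(s,m)>g(s)+1$ only forces $\zeta(s,x)\ge m$ for $x\in(0,g(s)+1]$, so the paper must separately show (via the set $A_2$ and a spinal change of measure) that particles above $g(t)$ are unlikely to have visited $(-\infty,0)$ during $[Ct^{1/3}/2,t]$; this is a routine estimate but it is a necessary step in turning ``time below $g$'' into ``time in $(0,g(s))$''. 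Second, the Borel--Cantelli route you propose along $t_k=2^k$ would work, but the paper obtains the almost-sure statement more directly: the events $\{\exists\, s\ge n: d(s,m)\le g(s)+1\}$ are decreasing in $n$, and the convergence $\p{\exists\, s\in[Ct^{1/3}/2,t]:d(s,m)\le g(s)+1}\to 1$ already shows each of these events has probability one, with no summability needed.
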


For the remainder of the section, we fix $c \in (0,c^*)$ and let $g(s)=\sqrt 2 s-c s^{1/3}$ for $s\geq 0$. 
The following is the key step of the proof.

\begin{prop} \label{surf_prop} (``No one can surf $g$")
For any $C>0$, there exists $\delta = \delta (c,C)>0$ such that for $t$ sufficiently large  
\[
\p{\exists i \in N(t)\text{ s.t. } \mathrm{Leb} (\{s\leq t:X_{i,t}(s)\leq g(s)\})\leq Ct^{1/3}}\leq e^{-\delta t^{1/3}}. 
\]
\end{prop}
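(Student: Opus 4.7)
The plan is a first-moment argument combined with a consistent maximal displacement (CMD) estimate of the type proved in~\cite{matt_cmd}. First I would apply Markov's inequality together with the many-to-one lemma for BBM, giving
\[
\mathbf{P}\pcur{\exists i \in N(t): \mathrm{Leb}(\pcur{s \le t: X_{i,t}(s) \le g(s)}) \le Ct^{1/3}} \le e^t\, \mathbf{P}\pcur{\mathrm{Leb}(\pcur{s \le t: B_s \le g(s)}) \le Ct^{1/3}},
\]
with $B$ a standard Brownian motion started at $0$. A Girsanov change of measure writing $B_s = W_s + \sqrt 2 s$ (so that $W$ is a standard BM under the new measure) rewrites this as $\mathbf{E}[e^{-\sqrt 2 W_t}\,\I{L \le Ct^{1/3}}]$ with $L := \mathrm{Leb}(\pcur{s \le t: W_s \le -cs^{1/3}})$; it therefore suffices to bound this expectation by $e^{-\delta t^{1/3}}$.

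Next, conditioning on $W_t = z$ and completing the square in $z$, the expectation equals
\[
e^t \int \frac{e^{-(z+\sqrt 2 t)^2/(2t)}}{\sqrt{2\pi t}}\,\mathbf{P}^{br}_{0\to z}\pcur{L \le Ct^{1/3}}\,dz,
\]
where $\mathbf{P}^{br}_{0\to z}$ denotes the law of a Brownian bridge on $[0,t]$ from $0$ to $z$. The Gaussian weight is sharply peaked at $z = -\sqrt 2 t$, but there the bridge has drift $-\sqrt 2$ and is typically far below $-cs^{1/3}$, so $\pcur{L \le Ct^{1/3}}$ is essentially impossible; more precisely, the bridge mean $zs/t$ can sit above $-cs^{1/3}$ for all $s \le t$ only when $z \ge -ct^{1/3}$, so the dominant contribution to the integral comes from $z \approx -ct^{1/3}$, where the prefactor $e^t \cdot e^{-(z + \sqrt 2 t)^2/(2t)}/\sqrt{2\pi t}$ is of order $e^{\sqrt 2 c\, t^{1/3}}/\sqrt t$. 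The crux is then a bridge CMD estimate: for $z$ in the relevant range,
\[
\mathbf{P}^{br}_{0\to z}\pcur{L \le Ct^{1/3}} \le \exp(-\lambda(c)\, t^{1/3}),
\]
with a bridge CMD exponent $\lambda(c) > \sqrt 2 c$ for $c < c^*$. This strict inequality is the content of the definition of $c^*$: the constant is chosen so that $\lambda(c^*) = \sqrt 2 c^*$, i.e., the CMD cost exactly balances the Girsanov drift exponent at criticality. Combining these bounds yields the desired $e^{(\sqrt 2 c - \lambda(c))t^{1/3}} \le e^{-\delta t^{1/3}}$ with $\delta := \lambda(c) - \sqrt 2 c > 0$.

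The main obstacle is the bridge CMD estimate in the previous paragraph. The event $\pcur{L \le Ct^{1/3}}$ is strictly weaker than ``$W_s \ge -cs^{1/3}$ for every $s \le t$'', to which the standard CMD bounds of~\cite{matt_cmd} apply directly, so one cannot simply quote these. Two plausible routes suggest themselves: (a) show that on $\pcur{L \le Ct^{1/3}}$ the path actually stays above a slightly shifted curve $-cs^{1/3} - h(s)$ for \emph{every} $s$ (with $h(s) = o(s^{1/3})$ uniformly in $s \in [t^{1/3}, t]$, the brief excursions below the original curve being absorbed into the shift), then invoke a shifted CMD estimate, which retains the $c^*$ threshold provided the shift is small enough; or (b) decompose via the last time $s^*$ at which the path meets the barrier and apply the standard CMD bound on $[s^*, t]$, with a union bound over a discretization of $s^*$. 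Either route introduces additional entropy cost that must not consume the gap $\lambda(c) - \sqrt 2 c$; controlling this carefully is what forces the precise constant $c^*$ to appear in the statement rather than some cruder bound.
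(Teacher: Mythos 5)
Your proposal correctly identifies the overall skeleton (many-to-one plus a CMD-type barrier estimate) and, to its credit, names the genuine crux: the event $\{\mathrm{Leb}(\{s\le t: B_s \le g(s)\})\le Ct^{1/3}\}$ is strictly weaker than $\{B_s \ge g(s)\text{ for all }s\}$, so the CMD estimates of \cite{matt_cmd} cannot be quoted directly. But neither of your two proposed routes for closing this gap is correct as stated, and closing it is essentially the entire content of the paper's proof.

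Route (a) is phrased as a deterministic claim — that on $\{L\le Ct^{1/3}\}$ the path stays above a curve shifted by $h(s)=o(s^{1/3})$ — and this is false: an excursion of duration $\le Ct^{1/3}$ can reach arbitrary depth. The correct statement is only probabilistic, and making it so is nontrivial. The paper's Lemma~\ref{tech_lemma} proves exactly this: conditionally on the Lebesgue bound (and on an \emph{upper} barrier), the probability of staying above $b(s)-Kt^{1/6}$ is at least $1/2$. Its proof requires linearizing the barrier on overlapping windows of length $\Theta(t^{1/3})$, observing that conditionally on the excursion intervals below the linearization, the excursions are Brownian excursions/meanders, and bounding their maximum depth given the total excursion time via Lemma~\ref{lem:gb}. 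The constant $Kt^{1/6}$ (not a generic $o(t^{1/3})$) comes from the Gaussian tail of a length-$O(t^{1/3})$ excursion; you need this quantitative form to feed into the barrier estimate. Route (b) (discretizing over the last crossing time) does not obviously work either, because it leaves the path unconstrained before $s^*$ and reintroduces the same problem there.

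There is a second, unaddressed obstacle: after many-to-one, the bound $e^t\,\p{L\le Ct^{1/3}}$ does not close by itself, because paths that drift far \emph{above} the barrier also contribute. The paper caps this by introducing the upper barrier $b+\Delta$ (where $\Delta$ is constructed, following Jaffuel, as a solution to an integral equation so that the tube probability has precisely the right exponent), and then partitions according to the first unit time interval on which $B$ exits through the top; each term is controlled by Lemma~\ref{tailor_made_roberts}, a two-sided tube estimate from \cite{matt_cmd}. Your Brownian-bridge conditioning on $B_t=z$ is one way to mimic this, but you would then need a two-sided \emph{bridge} tube estimate together with the excursion argument inside the bridge, neither of which you supply. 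So the plan has the right high-level shape, but the key lemma — converting the Lebesgue condition into a two-sided tube condition at only a $\Theta(1)$ multiplicative cost — is missing, and that lemma is the heart of the proof.
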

The proof of Proposition \ref{surf_prop} will take up most of this section, but we now give a brief justification of the result, and then show how it is used to prove Proposition \ref{upper_bound}. 
We shall choose a small constant $\beta>0$ and let $b(s)=\sqrt 2 s -c(s+\beta t)^{1/3}$ for $s\in [0,t]$.
Then by adapting the method used in \cite{jaffuel2012} for studying branching random walks, one may show that since $c<c^*$ if $\beta$ is sufficiently small then for any constant $K$,
\[
\p{\exists i\in N(t)\text{ s.t. }X_{i,t}(s)\geq b(s)-Kt^{1/6}\,\forall s\leq t}\leq e^{-\delta t^{1/3}} 
\]
for some $\delta>0$. 

Now fix $K>0$ large. For large $t$, the function $b$ is approximately linear on intervals of length $Ct^{1/3}$. This will allow us to use Brownian scaling to show that if $i\in N(t)$ only spends time $Ct^{1/3}$ time below $b$, then it has conditional probability at least $1/2$ of staying above $b-Kt^{1/6}$, so the probability such an $i\in N(t)$ exists is also $O(e^{-\delta t^{1/3}})$. 
Since $b\leq g$ this gives us Proposition \ref{surf_prop}.

Before giving the details of this argument, we prove Proposition \ref{upper_bound} assuming Proposition \ref{surf_prop}.
\begin{proof}[Proof of Proposition \ref{upper_bound}]
We continue to write $g(s)=\sqrt 2 s -cs^{1/3}$, for fixed $c \in (0,c^*)$ as above. Fix $m > 0$, let $C=4\sqrt 2 c(1+m^{-1})$, and let $\delta = \delta (c, C)$ be as defined in Proposition \ref{surf_prop}. It suffices to show that, as $t\rightarrow \infty$,
\begin{equation} \label{upper_bound_aim}
\p{\exists s\in [Ct^{1/3}/2,t]:d(s,m)\le g(s)+1}\rightarrow 1.
\end{equation}

Next, fix $t$ large. Recalling the notation $N(t,x)=\{i\in N(t):X_i(t)\geq x\}$, let
\[
A_1 =\{i\in N(t,g(t)):\mathrm{Leb} (\{s\leq t:X_{i,t}(s)\le g(s)\})\leq Ct^{1/3}\} 
\]
\[
\text{and }A_2 =\{i\in N(t,g(t)):\exists s\in[Ct^{1/3}/2,t]:X_{i,t}(s)<0\}.  
\]
Also, let $E$ be the event that $d(s,m)>g(s)+1$ for all $s\in [Ct^{1/3}/2,t)$. On the event $E$, if $i\in N(t,g(t))$ and $i\notin A_1 \cup A_2 $ then 
\[
M_i (t)\leq \exp(-m \mathrm{Leb} (\{Ct^{1/3}/2\le s\leq t:X_{i,t}(s)\in (0, g(s))\}))\leq \exp(-mCt^{1/3}/2). 
\]
Since all masses are at most $1$, it follows that on $E$,
\[
\sum_{i\in N(t,g(t))}M_i(t)\leq |A_1 \cup A_2|+ \exp(-mCt^{1/3}/2) |N(t,g(t))|.
\]
Also, for all $y\geq g(t)+1$ we have $\zeta (t,y)\leq \sum_{i\in N(t,g(t))}M_i(t)$; we thus have 
\begin{align}
& \,  \p{\forall s\in [Ct^{1/3}/2,t],d(s,m)> g(s)+1} \nonumber\\
= & \, 
\p{d(t,m)> g(t)+1,E} \nonumber\\
\le & \,  \p{\sum_{i\in N(t,g(t))}M_i(t)>m,E}\nonumber\\
\leq& \,  \p{A_1\cup A_2\neq \emptyset}+\p{|N(t,g(t))|\geq m \exp(mCt^{1/3}/2)}\, . \label{eq:threebounds}
\end{align}

By Proposition \ref{surf_prop}, for $t$ sufficiently large, $\p{A_1\neq \emptyset}\leq \exp (-\delta t^{1/3}).$ Next, using a spinal change of measure,
\[
\p{A_2 \neq \emptyset}\leq \E {|A_2|}\leq e^{\sqrt 2 c t^{1/3}}\p{B(t) \geq -ct^{1/3},\exists s\in [Ct^{1/3}/2,t]:B(s) \leq -\sqrt 2 s}. 
\]
Now partitioning according to the first interval $[j,j+1]$ in which $B(s) \leq - \sqrt 2 s$,
\begin{align*}
\p{\exists s\in [Ct^{1/3}/2,t]:B(s) \leq -\sqrt 2 s}&\leq \sum_{j=\lfloor Ct^{1/3}/2\rfloor}^{\lfloor t \rfloor}\p{\sup_{s\in [j,j+1]}B(s) \geq \sqrt 2 j}\\
&\leq \sum_{j=\lfloor Ct^{1/3}/2\rfloor}^{\lfloor t \rfloor}\p{\sup_{s\le j+1}B(s) \geq \sqrt 2 j}\\
&= 2\sum_{j=\lfloor Ct^{1/3}/2\rfloor}^{\lfloor t \rfloor}\p{B(j+1) \geq \sqrt 2 j}\\
&\leq 2 \sum_{j=\lfloor Ct^{1/3}/2\rfloor}^{\lfloor t \rfloor}\exp(-j^2/(j+1))\\
&\leq 2 \exp(-Ct^{1/3}/3)
\end{align*}
where the equality in the third line follows by the reflection principle, and the final inequality holds for $t$ sufficiently large. Since $C>4\sqrt 2 c$, it follows that 
\[
\p{A_2 \neq \emptyset}\leq e^{\sqrt 2 c t^{1/3}} 2 e^{-Ct^{1/3}/3}\le 2e^{-\sqrt 2 c t^{1/3}/3}. 
\] 
Finally, by another spinal change of measure,
\[
\p{|N(t,g(t))|>x}\leq x^{-1}\E{|N(t,g(t))|}=x^{-1}\E{e^{-\sqrt 2 B(t)}\I{B(t){ \geq -ct^{1/3}}}}\leq x^{-1}e^{\sqrt 2 c t^{1/3}}. 
\]
Combining the bounds on $\p{A_1 \ne \emptyset}$, $\p{A_2 \ne \emptyset}$, and $\p{|N(t,g(t))|>x}$ with 
(\ref{eq:threebounds}), we obtain that 
\[
\p{\forall s\in [Ct^{1/3}/2,t],d(s,m)> g(s)+1} \le 
e^{-\delta t^{1/3}}+2e^{-\sqrt 2 c t^{1/3}/3}+m^{-1}e^{(\sqrt 2 c -mC/2)t^{1/3}}\, ,
\]
which tends to $0$ as $t\rightarrow \infty$ since $C>2\sqrt 2 c m^{-1}$.
This establishes \eqref{upper_bound_aim} and completes the proof.
\end{proof}

For the rest of this section we work towards the proof of Proposition \ref{surf_prop}.
We shall need the following lemma.
Recall that we fixed $c<c^*$.
\begin{lem} \label{tailor_made_roberts}
There exists $\beta>0$ such that for $b(s)=\sqrt 2 s -c(s+\beta t)^{1/3}$ and for $K>0$, $t>0$ both sufficiently large, there exists a function $\Delta:[0,t]\to [t^{1/4},Kt^{1/3}]$ with $\Delta (t)\leq Kt^{1/4}$ and with $|\Delta ' (s)|\le 1$ for all $s\in [0,t]$, such that for all $u\in [0,t]$ and all $x\in [-Kt^{1/6},\Delta (u))$,
\begin{align}
&\p{b(s)-Kt^{1/6}<B(s)<b(s)+\Delta(s) \, \forall s\leq u, B(u)>b(u)+x}\nonumber\\
&\hspace{2cm} \leq \exp (-u-t^{1/3}/K +\sqrt 2 (\Delta (u)-x)).\label{eq:tmr}
\end{align}
\end{lem}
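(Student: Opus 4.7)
The plan is to combine a Cameron--Martin--Girsanov tilt with a Brownian corridor-confinement estimate, using a carefully designed $\Delta$ to extract the precise constant $c^*$.

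\textit{Step 1: Girsanov.} Set $W(s)=B(s)-b(s)$; under $P$, $W$ has drift $-b'(s)=-\sqrt 2 + (c/3)(s+\beta t)^{-2/3}$, and the event reads $\{W(s)\in[-Kt^{1/6},\Delta(s))\,\forall s\le u,\ W(u)\ge x\}$. A Girsanov change of measure based on the tilt $h(s)=b(s)-b(0)$ yields a measure $Q$ under which $\tilde B(s)=W(s)-c(\beta t)^{1/3}$ is a standard Brownian motion, with $dP/dQ = \exp(-\int_0^u b'(s)\,d\tilde B(s)-\tfrac12\int_0^u b'(s)^2\,ds)$. Explicit evaluation gives $\tfrac12\int_0^u b'(s)^2\,ds=u-\sqrt 2 c[(u+\beta t)^{1/3}-(\beta t)^{1/3}]+O(1)$, and integration by parts of the stochastic integral (using $|W(s)|\le(K+1)t^{1/3}$ on the event together with $b''(s)=O((s+\beta t)^{-5/3})$, so $\int_0^u|b''(s)|\,ds=O(t^{-2/3})$) reduces it to $\sqrt 2\tilde B(u)+o(1)$. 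Combined with $W(u)\ge x$ this produces
\begin{equation*}
P(\text{event})\le\exp\!\bigl(-u+\sqrt 2 c(u+\beta t)^{1/3}-\sqrt 2 x+O_{c,\beta}(1)\bigr)\cdot Q\{W(s)\in[-Kt^{1/6},\Delta(s))\,\forall s\le u\}.
\end{equation*}

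\textit{Step 2: Corridor confinement and the critical $c^*$.} I choose $\Delta(s)=D(s+\beta t)^{1/3}$ on the bulk of $[0,t]$, tapered linearly to $t^{1/4}$ near $s=0$ and to $Kt^{1/4}$ near $s=t$ within the regularity $|\Delta'|\le 1$. The constant $D$ must satisfy $D>c$ (so that $W(0)=c(\beta t)^{1/3}$ lies inside the initial corridor) and should optimise the balance between the endpoint contribution $\sqrt 2\Delta(u)=\sqrt 2 D(u+\beta t)^{1/3}$ and the tube-confinement cost $\int_0^u\pi^2/(2\Delta(s)^2)\,ds=\tfrac{3\pi^2}{2D^2}(u+\beta t)^{1/3}+O(1)$. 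Writing $h(D):=D+\tfrac{3\pi^2}{2\sqrt 2 D^2}$, the needed bound holds provided $c<h(D)$ with room to spare for the $-t^{1/3}/K$ slack; a calculus check gives $\min_{D>0}h(D)=3^{4/3}\pi^{2/3}/2^{7/6}=c^*$, attained at $D_*=(3\pi^2/\sqrt 2)^{1/3}$, so the hypothesis $c<c^*$ is exactly what guarantees an admissible $D$. Combined with the classical Brownian tube-confinement bound $Q\{\text{corridor}\}\le\exp(-\tfrac{\pi^2}{2}\int_0^u \Delta(s)^{-2}\,ds+O(\log t))$, the coefficient of $(u+\beta t)^{1/3}$ in the combined exponent becomes $\sqrt 2 (c-h(D))<0$, delivering the $-t^{1/3}/K$ margin once $K$ is large.

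\textit{Main obstacle.} The crux is the quantitative tube-confinement estimate, which must hold uniformly in $u\in[0,t]$ for a corridor whose width varies slowly but substantially --- from order $t^{1/4}$ near the endpoints to order $t^{1/3}$ in the bulk. I would slice $[0,u]$ into pieces of length comparable to $\Delta(s)^2$, on which $\Delta$ is essentially constant by the regularity $|\Delta'|\le 1$; on each piece apply the standard constant-width Dirichlet spectral bound, then accumulate via the Markov property. Boundary corrections from the non-central initial position $W(0)=c(\beta t)^{1/3}$ and from the restricted terminal set $W(u)\ge x$ produce only $O(\log t)$ entropic factors, easily absorbed into the slack provided by $K$. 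The overall strategy mirrors the argument of \cite{jaffuel2012} for branching random walks, adapted to Brownian motion and with the endpoint-taper of $\Delta$ engineered to handle the small-$s$ and $s$-near-$t$ regimes separately.
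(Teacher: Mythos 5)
Your overall strategy (Girsanov tilt to remove the drift $b'$, then a Dirichlet/spectral tube-confinement estimate) is the right one and is essentially what Roberts's Proposition 4 and Lemma 7 in \cite{matt_cmd} package (the paper appeals to those rather than re-deriving them). The construction of $\Delta$, however, contains a genuine gap.

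The obstruction is the endpoint condition $\Delta(t)\le Kt^{1/4}$, which your proposal satisfies by tapering $\Delta(s)=D(s+\beta t)^{1/3}$ down to $Kt^{1/4}$ near $s=t$. That taper destroys exactly the balance on which your $h(D)$ optimisation rests. After reducing via Lemma~\ref{roberts_lemma}, the inequality one must verify uniformly in $u\in[0,t]$ is
\[
\int_0^u \frac{\pi^2}{2L(s)^2}\,ds \;\ge\; \sqrt 2\,c\,u^{1/3} - \sqrt 2\,L(u) + \frac{t^{1/3}}{K} + O(t^{1/6}),\qquad L=\Delta+Kt^{1/6}.
\]
Your calculus check treats $L(u)=D(u+\beta t)^{1/3}$ \emph{also at $u=t$}, which is what makes the endpoint contribution $\sqrt 2 D t^{1/3}$ appear on the right and turns the condition into $c<h(D)$ with $\min h=c^*$. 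But after the taper $L(t)\approx Kt^{1/4}$, so the $-\sqrt 2 L(t)$ term contributes only $O(t^{1/4})$, not $-\sqrt 2 D t^{1/3}$, while the extra confinement cost accumulated in the taper region is only $O(1)$ (a short interval on which $1/L^2\le K^{-2}t^{-1/2}$). Thus at $u=t$ the requirement degenerates to $\tfrac{3\pi^2}{2D^2}\bigl((1+\beta)^{1/3}-\beta^{1/3}\bigr)\ge \sqrt 2 c + 1/K$, i.e.\ (for small $\beta$) $D^2<\tfrac{3\pi^2}{2\sqrt 2\,c}$. Combined with the initial-condition constraint $D>c$, admissible $D$ exist only for $c<(3\pi^2/(2\sqrt 2))^{1/3}$, which is strictly less than $c^*=\tfrac32(3\pi^2/\sqrt 2)^{1/3}$. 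So the constant-$D$-plus-linear-taper corridor covers only a proper subinterval of $(0,c^*)$, and the optimisation of $h(D)$ as written never actually enters (since $h(D)\ge c^*>c$ automatically).

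What the paper does instead---and this is the idea your proposal is missing---is to take $L$ proportional to the rescaled solution $l$ of the Jaffuel integral equation $l(s)=\alpha+cs^{1/3}-\tfrac{\pi^2}{2\sqrt 2}\int_0^s l(u)^{-2}\,du$ with $l(1)=0$. This function decays smoothly to zero as $s\to 1$, so $L$ is already small near $s=t$ (which gives $\Delta(t)=O(t^{1/4})$ after the $u_t$-cutoff) \emph{and} the identity built into the equation yields $\tfrac{\pi^2}{2}\int_0^u L^{-2} = \sqrt 2\,cu^{1/3}-\sqrt 2 L(u)+\tfrac{\sqrt 2}{2}\alpha t^{1/3}+\cdots$ exactly, so the $-\sqrt 2 L(u)$ help and the extra confinement cost from the narrowing corridor balance at every $u$, not just in the bulk. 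The positive constant $\alpha$ (which exists precisely because $c<c^*$, by Jaffuel's Propositions 3.2 and 3.6) then supplies the $t^{1/3}/K$ slack uniformly. A constant-coefficient cube-root corridor cannot reproduce this simultaneous balance: to fix your argument you would need to either borrow Jaffuel's $l$, or manufacture some other $\Delta$ whose decay near $s=t$ is tuned so that the accumulated $\int \pi^2/(2\Delta^2)$ over the narrowing part grows at the same $\Theta(t^{1/3})$ rate at which $\sqrt 2\Delta(u)$ is being lost.
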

We prove Lemma~\ref{tailor_made_roberts} by appealing to technical lemmas from \citep{matt_cmd}, which bound the probability that a Brownian motion stays in a narrow tube of variable width. In order to verify that the results of \citep{matt_cmd} apply for some function $\Delta$ with the above properties, we adapt a technique from \cite{jaffuel2012}. In \cite{jaffuel2012}, the existence of a function analogous to $\Delta$ is constructed as the solution of a certain integral equation. We defer the details of the proof to Appendix \ref{append_upper}. 

From this point on, we let $\beta> 0$ and $b(s)$ be as in Lemma \ref{tailor_made_roberts}. We assume that $t$ is sufficiently large that $b$ is increasing on $[0,\infty)$. 
We now show that if $K$ is sufficiently large, a Brownian motion which spends at most $C t^{1/3}$ time before time $t$ below the curve $b$ has a conditional probability of at least $1/2$ of staying above the curve $b-Kt^{1/6}$ up to time $t$.

\begin{lem} \label{tech_lemma}
Let $(B(s))_{s\geq 0}$ be a Brownian motion started at $0$. Then given $C>0$, there is a constant $K(C)>0$ such that for $t$ sufficiently large, and any measurable function $\Delta:[0,t]\rightarrow (0,\infty)$, 
\begin{align*}
&\p{B(s) \leq b(s)+\Delta(s)\, \forall \, s\leq t, \mathrm{Leb} (\{s\leq t:B(s)\leq b(s)\})\leq Ct^{1/3}}\\
&\le 2 \p{b(s)-Kt^{1/6} < B(s) < b(s)+\Delta(s)\, \forall \, s\leq t}.
\end{align*}
\end{lem}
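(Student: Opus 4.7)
The strategy is to show that, conditional on the LHS event $E = \{B\le b+\Delta \text{ on } [0,t],\ L_t\le Ct^{1/3}\}$ (where $L_t = \mathrm{Leb}(\{s\le t:B(s)\le b(s)\})$), the event $G = \{B > b - Kt^{1/6} \text{ on } [0,t]\}$ holds with conditional probability at least $\tfrac12$, provided $K = K(C)$ is chosen sufficiently large. This immediately yields the lemma, since the RHS event contains $E \cap G$. Setting $Y(s) = B(s) - b(s)$ and $\sigma = \inf\{s\le t:Y(s)\le -Kt^{1/6}\}$, it suffices to show $\mathbf{P}(\sigma\le t \mid E) \le \tfrac12$.

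My first step is to localise $\sigma$ near time $t$. By the strong Markov property at $\sigma$, $(Y(\sigma+r))_{r\ge 0}$ is a Brownian motion with drift $-b'(\sigma+\cdot)$ (uniformly close to $-\sqrt 2$ on $[0,t]$) started at $-Kt^{1/6}$. A standard exponential-martingale (ruin-probability) estimate gives that this process returns to $0$ with probability at most $\exp(-cKt^{1/6})$, negligible for large $K,t$. On the complementary event, $Y\le 0$ throughout $[\sigma,t]$, so $L_t\ge t-\sigma$; combined with $L_t\le Ct^{1/3}$, this forces $\sigma\in[t-Ct^{1/3},t]$ up to a negligible exceptional event.

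For the main bound, I would let $\tau=\sup\{s<\sigma:Y(s)=0\}$, which exists since $Y(0)=-b(0)>0$ for large $t$, and observe that $\sigma-\tau\le Ct^{1/3}$ while $Y$ drops from $0$ to $-Kt^{1/6}$ over $(\tau,\sigma]$. Since $\tau$ is a last-exit time rather than a stopping time, I would discretise: partition $[t-2Ct^{1/3},t]$ into $O(1)$ intervals $I$ of length $t^{1/3}$, and for each $I$ union-bound the probability that $\sup_I Y - \inf_I Y \ge Kt^{1/6}$. By Brownian scaling and the reflection principle, after separating the deterministic drift contribution, each such probability is of order $\exp(-K^2/(2C))$. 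Summing over the $O(1)$ intervals and choosing $K$ sufficiently large in terms of $C$ produces the desired conditional bound $\tfrac12$.

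The main obstacle is the drift correction: a Brownian motion with drift $-\sqrt 2$ over an interval of length $Ct^{1/3}$ accumulates a deterministic shift of $\sqrt 2\, Ct^{1/3}$, which exceeds $Kt^{1/6}$ for fixed $K$ and large $t$, rendering a naive reflection-principle bound useless. One fix is to apply the scaling estimate to the de-drifted increment $Y(\sigma)-Y(\tau)+\sqrt 2(\sigma-\tau)$, which is a mean-zero Brownian quantity and scales in the required way; equivalently, one shifts the threshold in the reflection estimate by the maximum drift contribution, and uses that the actual increment $-Kt^{1/6}$ must then lie at least $K/\sqrt{C}$ standard deviations below its conditional mean. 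A more abstract alternative is a Girsanov change of measure that removes the drift entirely, reducing the question to driftless Brownian motion over windows of length $\asymp t^{1/3}$, on which Brownian scaling applies cleanly and delivers the target tail $\exp(-K^2/(2C))$.
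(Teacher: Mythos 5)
Your high-level strategy -- reduce to showing $\mathbf{P}(A_1^c\mid A_2\cap E)\le\tfrac12$ -- matches the paper's. But the way you propose to bound that conditional probability has a genuine gap, and your own ``main obstacle'' paragraph is where it lives. The issue is that the drift of $Y=B-b$ is $\approx -\sqrt2$, so over a window of length $\sigma-\tau\le Ct^{1/3}$ the deterministic drop is $\approx\sqrt2(\sigma-\tau)$, which for $\sigma-\tau$ of order $t^{1/6}$ or larger already exceeds $Kt^{1/6}$. Neither of your two proposed fixes resolves this. De-drifting by looking at $Y(\sigma)-Y(\tau)+\sqrt2(\sigma-\tau) = B(\sigma)-B(\tau)$ does give a mean-zero quantity, but this quantity equals $-Kt^{1/6}+(b(\sigma)-b(\tau))$, which can be $\emph{positive}$ (and typically is, once $\sigma-\tau\gtrsim Kt^{1/6}$), so no small-probability tail is forced. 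A Girsanov change of measure on the window of length $\Theta(t^{1/3})$ carries a Radon--Nikodym factor of order $e^{\Theta(t^{1/3})}$ on the relevant event (since $B$ moves by $\approx\sqrt2(\sigma-\tau)$ to track $b$), which destroys any estimate obtained under the tilted measure. So the probability you union-bound -- essentially, $\mathbf{P}\big(Y$ drops by $Kt^{1/6}$ within a window of length $Ct^{1/3}\big)$ -- is close to $1$, not $e^{-K^2/(2C)}$, and the argument does not close.

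The missing idea is that the depth bound must be measured $\emph{conditionally on the excursion returning to the barrier}$. The paper replaces $b$ on each window $[iL,(i+2)L]$ by its linear chord $b_i$ (shifted down by $1$, so $b_i\le b$ by convexity), and then conditions on the excursion intervals $\mathcal U_i$ where $B<b_i$. Given the endpoints of such an interval, the path of $b_i-B$ over it is a Brownian excursion (or a meander for the last, incomplete one), and this is a $\emph{driftless, mean-zero}$ object: the bridge from endpoint to endpoint absorbs the slope of $b_i$. The depth of such an excursion of length $\ell\le Ct^{1/3}$ is $O(\ell^{1/2})$ with Gaussian tails, which is how $Kt^{1/6}$ arises. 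This conditioning-on-the-return is exactly what is absent from your union-bound over intervals $I$: you bound $\mathbf{P}(\mathrm{osc}_I Y\ge Kt^{1/6})$ unconditionally, whereas the event $E$ (via $L_t\le Ct^{1/3}$) is what forces $Y$ back to $0$ and thereby tames the drift. Relatedly, the paper also needs to show that the excursion structure survives further conditioning on $A_2\cap E$; it does so by noting $A_2\cap E$ is measurable with respect to $\mathrm{Leb}(\mathcal U)$ and $B$ restricted to the complement of $\mathcal U$. Your proposal has no analogue of this step either.

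A smaller caveat: your localisation argument (ruin estimate showing $Y$ does not return to $0$ after $\sigma$, hence $\sigma\ge t-Ct^{1/3}$) is unconditional, and one still has to explain why it is harmless to subtract that exceptional event inside the conditional probability $\mathbf{P}(\cdot\mid A_2\cap E)$; this is fixable, but it points to the same underlying difficulty that conditioning on $E$ materially changes the law of $Y$.
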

In proving Lemma~\ref{tech_lemma}, we will use the following auxiliary result.
\begin{lem}\label{lem:gb}
Fix non-negative real numbers $(t_i,i \ge 1)$
For each $i \ge 1$ let $(X_i(u),0 \le u \le t_i)$ be either a Brownian meander or a Brownian excursion of length $t_i$. Then writing $T=\sum_{i\ge 1} t_i$, for $x \ge 8 T^{1/2}$ we have 
\[
\p{\max_{i \ge 1} \max_{u \le t_i} X_i(u) \ge x}< e^{-x^2/16T}. 
\]
\end{lem}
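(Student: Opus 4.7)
My plan is to reduce to unit-length pieces by Brownian scaling, establish a Gaussian-type tail bound for the maximum of a single meander or excursion, and then sum these bounds using a union bound that exploits the constraint $\sum_i t_i = T$ to force rapid decay of the tail of the individual pieces as $t_i$ shrinks.

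\textbf{Per-piece tail via unit-length reduction.} By the usual scaling of Brownian meanders and excursions, $\max_{u \le t_i} X_i(u)$ has the law of $\sqrt{t_i}\,\tilde M_i$, where $\tilde M_i$ is the maximum of a unit-length meander (resp.\ excursion). I would establish a bound $\p{\tilde M_i \ge y} \le p(y) e^{-y^2/\gamma}$ for some polynomial $p$ and some constant $\gamma \in (0,16)$. For the excursion, Kennedy's closed form $\p{\max e_1 \ge y} = 2\sum_{k\ge 1}(4k^2 y^2 - 1) e^{-2k^2 y^2}$ yields this immediately with $\gamma=1/2$. For the meander, I would combine Imhof's absolute continuity relation $\p{\max m_1 \ge y} = \sqrt{\pi/2}\,\E{\I{\max R \ge y}/R(1)}$ (with $R$ a three-dimensional Bessel process) with Cauchy--Schwarz and the Chernoff bound $\p{\max R \ge y} \le 2y^2 e^{-y^2/2}$ coming from the Laplace transform $\E{e^{-\lambda \tau_y}} = \sqrt{2\lambda}\,y/\sinh(\sqrt{2\lambda}\,y)$ of the first passage time of $R$. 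This route yields $\gamma = 4$ for the meander, with ample slack.

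\textbf{Union bound and constants.} Reorder so that $t_1 \ge t_2 \ge \cdots$; the inequality $\sum_{j \le i} t_j \ge i\, t_i$ then gives $t_i \le T/i$. Combining with the per-piece tail,
\[
\p{\max_{u \le t_i} X_i(u) \ge x} \le p(x/\sqrt{t_i})\,e^{-x^2/(\gamma t_i)} \le p(x/\sqrt{t_i})\,e^{-i x^2/(\gamma T)}.
\]
Summing this polynomial-times-geometric series in $i \ge 1$ gives a bound of the form $q(x/\sqrt{T})\, e^{-x^2/(\gamma T)}$ for some polynomial $q$. For $x \ge 8 T^{1/2}$ the exponent gap satisfies $(1/\gamma - 1/16)\,x^2/T \ge (1/4 - 1/16)\cdot 64 = 12$, so the polynomial prefactor is comfortably absorbed and the claimed $e^{-x^2/(16T)}$ bound follows.

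\textbf{Main obstacle.} The only delicate point is the meander tail bound: unlike Kennedy's identity for the excursion, the meander has no one-line closed form for the distribution of its maximum. I would want to be careful that the Imhof-then-Cauchy--Schwarz route produces an exponent $\gamma$ strictly less than $16$ and a manageable polynomial prefactor, since otherwise the final summation would not close. An alternative would be a direct reflection-principle computation for Brownian motion conditioned to stay positive, but the Bessel route is cleaner.
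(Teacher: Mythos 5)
Your proof is correct in spirit but takes a genuinely different route from the paper on both halves of the argument.

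For the per-piece tail bound, the paper does not use Kennedy's explicit series or Imhof's absolute-continuity relation. Instead it proves an elementary stochastic comparison: for a unit-length excursion, $\max B^{\mathrm{ex}} \eqdist \max B^{\mathrm{br}} - \min B^{\mathrm{br}} \le 2\max|B^{\mathrm{br}}| \le 4\max|B|$; for a unit-length meander, it bounds $\max B^{\mathrm{me}}$ stochastically by the running maximum of a three-dimensional Bessel process and then applies Pitman's $2M-B$ theorem to reduce to $3\max|B|$. This gives a single clean bound $\p{\max W \ge y} \le 4\p{N \ge y/4}$ covering both cases, with nothing deeper than the reflection principle and Pitman's theorem. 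Your route is heavier in its inputs (Kennedy's identity, Imhof's relation, the Laplace transform of the Bessel first-passage time) but gives tighter exponents ($\gamma = 1/2$ for the excursion, $\gamma = 4$ for the meander), whereas the paper is content with a looser $\gamma = 16$ because the desired final constant is $16T$ anyway.

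For the summation, the paper does something slicker than your reorder-and-geometric-series argument: after the per-piece step it has $\sum_i e^{-x^2/(16t_i)}$, and it observes that $a \mapsto e^{-x^2/a}$ is convex on $[0,x^2/2]$ and vanishes at $0$, so $e^{-x^2/(16t_i)} \le (t_i/T)\,e^{-x^2/(16T)}$ whenever $16T \le x^2/2$ (i.e.\ $x \ge 8T^{1/2}$, hence the hypothesis). Summing over $i$ then collapses immediately to $e^{-x^2/(16T)}$, with no reordering and no control of a polynomial prefactor required. This is why the paper needs the $8t_i^{1/2}/x \le 1$ simplification first, which is exactly where the hypothesis $x \ge 8T^{1/2}$ enters.

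There is one imprecision in your summation step worth flagging. The displayed chain
\[
\p{\max_{u\le t_i} X_i(u) \ge x} \le p(x/\sqrt{t_i})\,e^{-x^2/(\gamma t_i)} \le p(x/\sqrt{t_i})\,e^{-ix^2/(\gamma T)}
\]
only replaces the exponent, not the polynomial, so the right-hand side is not yet a polynomial-times-geometric series in $i$: the prefactor $p(x/\sqrt{t_i})$ is unbounded if some $t_i$ are tiny. What rescues the argument is that $y \mapsto p(y)e^{-y^2/\gamma}$ is bounded (indeed decreasing for $y$ large); so one should either first absorb the polynomial into part of the exponential, replacing the per-piece bound by $C\,e^{-x^2/(2\gamma t_i)}$ with $C=\sup_y p(y)e^{-y^2/(2\gamma)}$, and only then compare $t_i \le T/i$; or apply $t_i \le T/i$ to the whole product $p(y)e^{-y^2/\gamma}$ at once using monotonicity for $y\ge x/\sqrt{T} \ge 8$. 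Either fix is routine and, as you note, there is ample slack with $\gamma = 4$. But as written, the step does not quite parse, and the paper's convexity trick avoids the issue entirely.
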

The proof of Lemma~\ref{lem:gb} is deferred to the appendix. 
\begin{proof} [Proof of Lemma \ref{tech_lemma}]
Write
\begin{align*}
E & = \{B(s) \leq b(s)+\Delta(s)\, \forall \, s\leq t\}\\
A_1 & =\{B(s) \ge b(s)-Kt^{1/6}\, \forall \, s\leq t\}\, ,\\
A_2 &= \{\mathrm{Leb} (\{s\leq t:B(s)\leq b(s)\})\leq Ct^{1/3}\}\, .
\end{align*}
To prove the lemma, it suffices to show that provided $K=K(C)$ is sufficiently large, $\probC{A_1^c}{A_2 \cap E} \le 1/2$, since 
\[
\p{A_1 \cap E} \ge \p{A_1\cap A_2 \cap E}  = \p{A_2 \cap E}(1-\Cprob{A_1^c}{A_2 \cap E}).
\]

Fix $L \in (Ct^{1/3},2Ct^{1/3}]$ so that $n:=t/L$ is integer; this is possible for $t$ large enough. Then, for each $i \le n-2$ let $b_i:[iL,(i+2)L] \to \R$ be defined by 
\begin{equation}\label{eq:to_justify}
b_i(s) = b(iL) + \frac{s-iL}{2L} (b((i+2)L)-b(iL)) - 1.
\end{equation}
Note that by convexity and since the linear terms cancel, for all $i \le n-2$ and $s \in [iL,(i+2)L]$, 
\begin{align*}
b(iL)+ \frac{s-iL}{2L} (b((i+2)L)-b(iL)) & \le b(s)+
2L |b'(i+2L)-b'(iL)| \\
& \le b(s) + (2L)^2 \cdot \frac{2c}{9(\beta t+iL)^{5/3}} \\
& \le b(s) +\frac{32cC^2}{9 \beta^{5/3} t}\, ,
\end{align*}
which is less than $b(s)+1$ for $t$ sufficiently large. 
It follows that for $t$ sufficiently large, $b_i \le b$ on the interval $[iL,(i+2)L]$, for all $i \le n-2$.

Next, for $i \le n-2$ let $g_i = \inf\{s \ge iL: B(s) \ge b_i(s)\}$. Also, for $i < n-2$ let $d_i =\sup\{s \le (i+2)L: B(s) \ge b_i(s)\}$, and let $d_{n-2}=t$. Then write 
\[
\mathcal{U}_i = \{s \in [g_i,d_i]: B(s) \le b_i(s)\}.
\]
For $i < n-2$, this is the set of times when $B$ is performing an excursion below $b_i$ which starts at or after time $iL$ and ends at or before time $(i+2)L$. 
For $i=n-2$ we have $(i+2)L=t$, and in this case we include a final excursion below $b_i$ which does not end before time $t$ if it starts at or after time $iL$. 
The set $\mathcal{U}_i$ is a union of closed intervals, which we enumerate as $\{[l_{i,j},r_{i,j}],j \ge 1\}$ according to a fixed rule (in decreasing order of size, say). 

For all $i < n-2$, conditional on $\mathcal{U}_i$, for each $j \ge 1$ the function 
\begin{equation}\label{eq:b_ex}
(b_i(l_{i,j}+s)-B(l_{i,j}+s),0 \le s \le r_{i,j}-l_{i,j}) 
\end{equation}
is a Brownian excursion of length $r_{i,j}-l_{i,j}$.
The case $i=n-2$ is very slightly different, and we now describe it; for the remainder of the paragraph set $i=n-2$. If $B(t) \ge b_{n-2}(t)$ then there is no change. 
However, if $B(t) < b_{n-2}(t)$ then there there is a unique integer $j \ge 1$ with $[l_{i,j},r_{i,j}]$ with $r_{i,j}=t$; for this $j$ the process described by (\ref{eq:b_ex}) is a Brownian meander of length $r_{i,j}-l_{i,j}$; for all other $j$ the process is a Brownian excursion. All this is true even if we additionally condition on $A_2 \cap E$, since letting $\mathcal U=\cup_{i\le n-2}\mathcal{U}_i$, the occurrence of the event $A_2 \cap E$ is determined by $\mathrm{Leb}(\mathcal U)$ and $B|_{[0,t]\setminus \mathcal U}$.
By Lemma~\ref{lem:gb}, it follows that 
\begin{align}
& \Cprob{\sup_{s \in \mathcal{U}_i} (b_i(s)-B(s)) \ge x}{\mathcal{U}_i,A_2 \cap E} \nonumber\\
= & \Cprob{\sup_{j \ge 1}\sup_{s \in [l_{i,j},r_{i,j}]} (b_i(s)-B(s)) \ge x}{\mathcal{U}_i,A_2 \cap E} \nonumber\\
\le & 
\exp\left(-\frac{x^2}{16\mathrm{Leb}(\mathcal{U}_i)}\right) + \I{x^2 < 64 \mathrm{Leb}(\mathcal{U}_i)}\, .\label{eq:max_bd}
\end{align}
We next analyze the event $A_1^c \cap A_2$. 
Note that $b_i+1$ is the linear interpolation of $b$ on the interval $[iL,(i+2)L]$. Since $b$ is convex it follows that $b \le b_i+1$ on this interval. 

If $A_1^c$ occurs then 
there is $s \le t$ such that $B(s) \le b(s) - K t^{1/6}$. 
For such $s$, for any $i$ with $s \in [iL,(i+2)L]$, 
the preceding paragraph then implies that $B(s) \le b(s) -Kt^{1/6} \le b_i(s) - (Kt^{1/6}-1)$. 

Next suppose $A_2$ occurs, and suppose $s\le t$ is such that $B(s) \le b(s)-Kt^{1/6}$. Then $s$ is in an excursion of $B(s)$ below $b(s)$. Temporarily write $[g,d]$ for the time interval during which this excursion takes place. Since $A_2$ occurs, $[g,\min(d,t)]$ has length at most $Ct^{1/3}$ so is strictly contained within in an interval $[iL,(i+2)L]$ for some $i \le n-2$. Since $s \in [g,\min(d,t)]$ and $B(g) = b(g) \ge b_i(g)$ and either $ d \ge t$ or $B(d) = b(d) \ge b_i(d)$ but $B(s) < b_i(s)$, it follows that $s \in \mathcal{U}_i$. 
On the other hand, each point $s$ lies in at most three distinct sets $\mathcal{U}_i$, so on $A_2$ we have 
\[
\sum_{i \le n-2} \mathrm{Leb}(\mathcal{U}_i) \le 3Ct^{1/3}\, .
\]

Finally, suppose $A_1^c \cap A_2$ occurs. 
Then the observations of the preceding three paragraphs imply that there exists $i \le n-2$ and 
$s \in \mathcal{U}_i$ such that $B(s) \le b_i(s) - (Kt^{1/6}-1) < b_i(s)- Kt^{1/6}/2$, the last inequality holding for $t$ large. 
Combined with (\ref{eq:max_bd}), this yields 
\begin{align*}
& \Cprob{A_1^c}{A_2 \cap E} \\
 \le&
 \Cprob{\sup_{i \le n-2}\sup_{s \in \mathcal{U}_i}
(b_i(s)-B(s)) \ge Kt^{1/6}/2,\sum_{i \le n-2} \mathrm{Leb}(\mathcal{U}_i) \le 3C t^{1/3}}{A_2 \cap E}\, \\
\le&  
\mathop{\sup_{u_1+\ldots+u_{n-2} \le 3Ct^{1/3}}}_{u_i \ge 0}
\sum_{i=1}^{n-2} 
 \Cprob{\sup_{s \in \mathcal{U}_i} (b_i(s)-B(s)) \ge Kt^{1/6}/2}{\mathrm{\mathrm{Leb}}(\mathcal{U}_i)=u_i,A_2\cap E} \\
 \le&\mathop{\sup_{u_1+\ldots+u_{n-2} \le 3Ct^{1/3}}}_{u_i \ge 0} \sum_{i=1}^{n-2} \exp(-K^2t^{1/3}/64u_i)\, ,
\end{align*}
the last bound holding provided that $K^2> 768C$ so that $(Kt^{1/6}/2)^2 > 64(3Ct^{1/3})$. 
Finally, letting $x=K^2t^{1/3}/64$, the function $f(a)=e^{-x/a}$ is convex for $a\in [0,x/2]$, and $f(0)=0$, so if  $K^2> 384C$ then for each $i$, $f(u_i)\leq (u_i/\sum u_k) f(\sum u_k)$. Hence 
\[
 \Cprob{A_1^c}{A_2 \cap E}  \le e^{-K^2/192C} < e^{-4}<1/2, 
\]
as required. 
\end{proof}

We next state a variant of Lemma \ref{tech_lemma}  which is proved in a similar way.

\begin{lem} \label{tech_lemma_variant}
Let $(B(s))_{s\geq 0}$ be a Brownian motion started at $0$. Then given $C>0$, there is a constant $K=K(C)$ such that for $t$ sufficiently large, and any measurable function $\Delta:[0,t]\rightarrow (0,\infty)$, $u\le t$ and $z\in [b(u), b(u)+\Delta(u))$, we have
\begin{align*}
&\p{B(s) \leq b(s)+\Delta(s)\, \forall \, s\leq u, \mathrm{Leb} (\{s\leq u:B(s)\leq b(s)\})\leq Ct^{1/3}, B(u)\geq z}\\
&\le 2 \p{b(s)-Kt^{1/6} < B(s) < b(s)+\Delta(s)\, \forall \, s\leq u, B(u)\geq z}.
\end{align*}
and
\begin{align*}
&\p{B(s) \leq b(s)+\Delta(s)\, \forall \, s\leq u, \mathrm{Leb} (\{s\leq u:B(s)\leq b(s)\})\leq Ct^{1/3}}\\
&\le 2 \p{b(s)-Kt^{1/6} < B(s) < b(s)+\Delta(s)\, \forall \, s\leq u}.
\end{align*}
\end{lem}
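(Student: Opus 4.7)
The proof follows Lemma \ref{tech_lemma} closely, replacing $[0,t]$ by $[0,u]$ in the path-events while keeping the scales $Kt^{1/6}$ and $Ct^{1/3}$. For the first inequality, define
\[
E = \{B(s) \le b(s)+\Delta(s)\,\forall s\le u\},\quad A_1 = \{B(s) \ge b(s)-Kt^{1/6}\,\forall s\le u\},
\]
\[
A_2 = \{\mathrm{Leb}(\{s\le u:B(s)\le b(s)\})\le Ct^{1/3}\},\quad F = \{B(u)\ge z\};
\]
as in Lemma \ref{tech_lemma}, it suffices to show $\Cprob{A_1^c}{A_2\cap E\cap F}\le 1/2$. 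The second inequality is identical with $F$ dropped.

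Assume first that $u > 2Ct^{1/3}$, so that some $L \in (Ct^{1/3}, 2Ct^{1/3}]$ satisfies $n := u/L \in \Z$. Partition $[0,u]$ into overlapping intervals $[iL, (i+2)L]$ for $0 \le i \le n-2$, define $b_i$ as the linear interpolant of $b$ on $[iL,(i+2)L]$ shifted down by $1$ (so $b_i \le b \le b_i + 1$ by convexity of $b$), and let $\mathcal U_i$ be the set of times at which $B$ lies below $b_i$ within $[iL,(i+2)L]$, with $\mathcal U = \bigcup_i \mathcal U_i$. The key observation for the first inequality is that $z \ge b(u) \ge b_{n-2}(u)$ forces $u \notin \mathcal U$ on $F$; hence $A_2 \cap E \cap F$ is $\sigma(\mathcal U, B|_{[0,u]\setminus \mathcal U})$-measurable, and conditional on these data each excursion of $b_i - B$ remains an independent Brownian excursion (the potential final meander of Lemma \ref{tech_lemma} is ruled out by $F$). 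Applying Lemma \ref{lem:gb} as in \eqref{eq:max_bd} and then summing over $i$ using $\sum_i \mathrm{Leb}(\mathcal U_i) \le 3Ct^{1/3}$ together with convexity of $a \mapsto e^{-x/a}$ yields $\Cprob{A_1^c}{A_2 \cap E \cap F} \le e^{-K^2/(192C)} < 1/2$ for $K=K(C)$ sufficiently large. The second inequality is handled identically, except that the final excursion of $B$ below $b_{n-2}$ is permitted to be a Brownian meander, which is also covered by Lemma \ref{lem:gb}.

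The boundary case $u \le 2Ct^{1/3}$ requires a separate argument, as the partition construction fails. Here $A_2$ is automatic when $u \le Ct^{1/3}$ and carries little information otherwise, and the needed bound reduces to showing that, conditional on $E \cap F$, a Brownian motion does not dip more than $Kt^{1/6}$ below $b$ over an interval of length $O(t^{1/3})$. This follows by a direct reflection-principle estimate, since both the unconstrained fluctuation scale $\sqrt u = O(t^{1/6})$ and the endpoint gap $|z - b(u)| \le \Delta(u)$ are far smaller than $Kt^{1/6}$ for $K = K(C)$ large. The main technical subtlety throughout is verifying that the endpoint event $F$ lies in the $\sigma$-algebra generated by the excursion skeleton and the path off the excursions, so that conditional independence of the excursions survives; this is where the hypothesis $z \ge b(u)$ plays the essential role, by placing $u$ outside the excursion set $\mathcal U$.
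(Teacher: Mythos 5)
Your argument for the main case $u>2Ct^{1/3}$ matches the paper's intended proof exactly: restrict the partition-and-excursion scheme of Lemma~\ref{tech_lemma} to $[0,u]$, and observe that $z\ge b(u)\ge b_{n-2}(u)$ forces $u$ outside the excursion set $\mathcal U$ on $F$, so conditioning on $F=\{B(u)\ge z\}$ does not disturb the conditional law of the excursion paths given $(\mathcal U_i)_i$ and no meander arises; dropping $F$ gives the second inequality with a possible final meander, still covered by Lemma~\ref{lem:gb}. This is precisely the content of the paper's one-line remark after the statement.

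Your treatment of the boundary case $u\le 2Ct^{1/3}$, however, contains a flaw. You argue via a reflection/bridge estimate, asserting that ``the endpoint gap $|z-b(u)|\le \Delta(u)$ [is] far smaller than $Kt^{1/6}$.'' But $\Delta$ is an arbitrary measurable function into $(0,\infty)$ — in the eventual application it takes values up to $Kt^{1/3}$ — so $\Delta(u)$ need not be small; and since $F$ pins $B(u)$ only from below, $B(u)$ is not constrained to lie within $O(t^{1/6})$ of $b(u)$. A bridge-style estimate therefore does not get off the ground as stated. The simpler and correct fix is to run your own excursion argument with a single interval: take $b_0$ to be the linear interpolant of $b$ on $[0,u]$ minus $1$. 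The computation in~(\ref{eq:to_justify}) still gives $b_0\le b\le b_0+1$ on $[0,u]$ since $u^2\sup_{[0,u]}|b''|=O(t^{2/3}/t^{5/3})\to0$; every excursion of $B$ below $b$ on $[0,u]$ is trivially contained in $[0,u]$ (no appeal to $A_2$ is needed for the containment); and $\mathrm{Leb}(\mathcal U_0)\le u\le 2Ct^{1/3}$. Lemma~\ref{lem:gb} then bounds $\Cprob{A_1^c}{E\cap F}$ by $\exp\bigl(-K^2t^{1/3}/(64\cdot 2Ct^{1/3})\bigr)$, which is at most $1/2$ for $K=K(C)$ large. With that repair the proof is complete.
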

\begin{proof}
These bounds are proved in the same way as Lemma \ref{tech_lemma}, by only considering the times $(\mathcal U_i)_i$ when $B$ is performing an excursion below $b_i$ on the interval $[0,u]$, and using that $z\ge b(u)$, conditioning on $B(u)\ge z$ does not affect the distribution of $B$ on $(\mathcal U_i)_i$ given $(\mathcal U_i)_i$. We omit the details. 
\end{proof}
We are now in a position to complete the proof of Proposition \ref{surf_prop}, using Lemmas \ref{tailor_made_roberts}, \ref{tech_lemma} and \ref{tech_lemma_variant}.
\begin{proof}[Proof of Proposition \ref{surf_prop}]
Choose $\beta$ such that that Lemma \ref{tailor_made_roberts} applies, and recall that $b(s)=\sqrt 2 s - c (s+\beta t)^{1/3}\leq g(s)$ $\forall s\geq 0$. Then 
\begin{align*}&\p{\exists i \in N(t)\text{ s.t. } \text{Leb} (\{s\leq t:X_{i,t}(s)\leq g(s)\})\leq Ct^{1/3}}\\
&\hspace{2cm}\leq \p{\exists i \in N(t)\text{ s.t. } \text{Leb} (\{s\leq t:X_{i,t}(s)\leq b(s)\})\leq Ct^{1/3}}.
\end{align*}
We shall prove that 
\begin{equation}  \label{want_to_show}
\p{\exists i \in N(t)\text{ s.t. } \text{Leb} (\{s\leq t:X_{i,t}(s)\leq b(s)\})\leq Ct^{1/3}}\leq e^{-\delta t^{1/3}}
\end{equation}
for some $\delta >0$ for $t$ sufficiently large, which establishes the proposition. 
Take $K$ and $t$ sufficiently large that Lemmas~\ref{tailor_made_roberts},~\ref{tech_lemma} and~\ref{tech_lemma_variant} hold. 
Then let $\Delta:[0,t]\to [t^{1/4},Kt^{1/3}]$ be as in Lemma~\ref{tailor_made_roberts}, and in particular satisfying that  $\Delta (t)\leq Kt^{1/4}$ and $|\Delta ' (s)|\le 1$ for all $s\in [0,t]$.

Since $|\Delta ' (s)|\le 1$ for all $s\in [0,t]$, $\inf_{u\in [j,j+1]} \Delta (u) \ge \Delta (j)-1$ for $j\in [0,t-1]$. Hence if for some $i\in N(j+1)$, $X_{i,j+1}(s)\ge b(s)+\Delta(s)$ for some $s\in [j,j+1]$, then since $b$ is increasing,
\begin{equation} \label{partition_delta_bound}
X_{i,j+1}(s)\ge b(j)+\inf_{u\in [j,j+1]} \Delta (u)\ge b(j)+ \Delta (j)-1.
\end{equation}
Using \eqref{partition_delta_bound}, and partitioning the event $\{\exists i\in N(t)\text{ s.t. Leb}(\{s\leq t:X_{i,t}(s)\leq b(s)\})\leq Ct^{1/3}\}$ according to the interval $[j,j+1]$ in which $X_{i,t}(s)$ first exceeds $b+\Delta$, we have that
\begin{align*}
&\p{\exists i\in N(t)\text{ s.t. Leb}(\{s\leq t:X_{i,t}(s)\leq b(s)\})\leq Ct^{1/3}}\\
&\hspace{0.5cm}\leq
\p{\exists i \in N(t)\text{ s.t. }X_{i,t}(s)\leq b(s)+\Delta(s)\, \forall \, s\leq t, \text{Leb}(\{s\leq t:X_{i,t}(s)\leq b(s)\})\leq Ct^{1/3}}\\
&\hspace{1cm}+\sum_{j=0}^{\lfloor t \rfloor} \mathbf P \{\exists i \in N(j+1)\text{ s.t. }X_{i,j+1}(s)\leq b(s)+\Delta(s)\, \forall \, s\leq j,\\
&\hspace{2.5cm} \text{Leb}(\{s\leq j:X_{i,j+1}(s)\leq b(s)\})\leq Ct^{1/3},\sup_{s\in [j,j+1]}X_{i,j+1}(s)\geq b(j)+\Delta(j)-1\}\\
&\hspace{0.5cm}\leq e^t \p{B(s)\leq b(s)+\Delta(s)\, \forall \, s\leq t, \text{Leb}(\{s\leq t:B(s)\leq b(s)\})\leq Ct^{1/3}}\\
&\hspace{1cm}+\sum_{j=0}^{\lfloor t \rfloor}e^{j+1}\mathbf P \{B(s)\leq b(s)+\Delta(s)\, \forall \, s\leq j,\\
&\hspace{4cm} \text{Leb}(\{s\leq j:B(s)\leq b(s)\})\leq Ct^{1/3},\sup_{s\in [j,j+1]}B(s)\geq b(j)+\Delta(j)-1\},
\end{align*}
where the last inequality follows by Markov's inequality and the many-to-one lemma.
By partitioning according to the value of $B(j)$, we further have
\begin{align*}
&\p{B(s)\leq b(s)+\Delta(s)\, \forall \, s\leq j,\text{Leb}(\{s\leq j:B(s)\leq b(s)\})\leq Ct^{1/3},\sup_{s\in [j,j+1]}B(s)\geq b(j)+\Delta(j)-1}\\
&\leq \p{B(s)\leq b(s)+\Delta(s)\, \forall \, s\leq j,\text{Leb}(\{s\leq j:B(s)\leq b(s)\})\leq Ct^{1/3}, B(j)\geq b(j)+\Delta(j)-\tfrac{1}{2}t^{1/4}}\\
&\hspace{0.5cm}+\p{\sup_{[0,1]}B(u)\geq \tfrac{1}{2}t^{1/4}}\p{B(s)\leq b(s)+\Delta(s)\, \forall \, s\leq j,\text{Leb}(\{s\leq j:B(s)\leq b(s)\})\leq Ct^{1/3}}.
\end{align*}
Since $\Delta (j)-\tfrac{1}{2}t^{1/4}>0$, we can now apply Lemma \ref{tech_lemma} and Lemma \ref{tech_lemma_variant} to conclude that
\begin{align} \label{leb_under_f}
&\p{\exists i\in N(t)\text{ s.t. Leb}(\{s\leq t:X_{i,t}(s)\leq b(s)\})\leq Ct^{1/3}}\nonumber \\
&\hspace{1cm}\leq 2 e^t \p{b(s)-Kt^{1/6} < B(s) < b(s)+\Delta(s)\, \forall \, s\leq t}\nonumber\\
&\hspace{1.2cm}+2\sum_{j=0}^{\lfloor t \rfloor}e^{j+1}\bigg(\p{b(s)-Kt^{1/6} < B(s) < b(s)+\Delta(s)\, \forall \, s\leq j,B(j)\geq b(j)+\Delta(j)-\tfrac{1}{2}t^{1/4}}\nonumber\\
&\hspace{3.8cm}+ 2e^{-\tfrac{1}{8}t^{1/2}}\p{b(s)-Kt^{1/6} < B(s) < b(s)+\Delta(s)\, \forall \, s\leq j}\bigg).
\end{align}
We can now apply Lemma \ref{tailor_made_roberts} to each term. First, by Lemma \ref{tailor_made_roberts} applied with $u=t$ and $x=-Kt^{1/6}$, since $\Delta (t) \le Kt^{1/4}$,
\[
\p{b(s)-Kt^{1/6} < B(s) < b(s)+\Delta(s)\, \forall \, s\leq t}\le \exp (-t -t^{1/3}/K +\sqrt 2 K(t^{1/4}+t^{1/6})). 
\]
By Lemma \ref{tailor_made_roberts} applied with $u=j$ and $x=\Delta (j)-\tfrac{1}{2}t^{1/4}$,
\begin{align*}
&\p{b(s)-Kt^{1/6} < B(s) < b(s)+\Delta(s)\, \forall \, s\leq j,B(j)\geq b(j)+\Delta(j)-\tfrac{1}{2}t^{1/4}}\\
&\hspace{1cm}\le \exp(-j-t^{1/3}/K+\sqrt 2 \tfrac{1}{2} t^{1/4}).
\end{align*}
Finally, by Lemma \ref{tailor_made_roberts} applied with $u=j$ and $x=-Kt^{1/6}$, since $\Delta(j)\le Kt^{1/3}$,
\[
\p{b(s)-Kt^{1/6} < B(s) < b(s)+\Delta(s)\, \forall \, s\leq j}
\le \exp(-j-t^{1/3}/K+\sqrt 2 K (t^{1/3}+t^{1/6})). 
\]
Putting everything together in \eqref{leb_under_f},
\begin{align*}
&\p{\exists i\in N(t)\text{ s.t. Leb}(\{s\leq t:X_{i,t}(s)\leq b(s)\})\leq Ct^{1/3}}\\
&\hspace{1cm}\le 2\exp (-t^{1/3}/K +\sqrt 2 K(t^{1/4}+t^{1/6}))\\
&\hspace{1.2cm}+2e\sum_{j=0}^{\lfloor t \rfloor}\bigg(\exp(-t^{1/3}/K+\sqrt 2 \tfrac{1}{2} t^{1/4})+ 2\exp(-t^{1/2}/8+O(t^{1/3}))\bigg)\\
&\hspace{1cm}\leq e^{-\delta t^{1/3}}
\end{align*}
for some $\delta >0$ for $t$ sufficiently large, which proves \eqref{want_to_show}.
\end{proof}

\section{The greatest overall particle density}\label{sec:max_density}
Before moving to the lower bound, we first prove logarithmic upper bounds on how the greatest particle density grows over time; these are needed to ensure that particle masses cannot {\em decay} too quickly. This may seem contradictory, but the point is that a particle may {\em a priori} quickly lose a large amount of mass if it finds itself in an extremely dense environment. The next proposition rules this out. 
\begin{prop}
\label{prop:massbound}
Let $Z=2\cdot 10^8$; then for all $s$ sufficiently large, 
\[\p{\sup\{\zeta(t,x): 0 \le t \le s, x \in \R\} > Z\log s} \le s^{-4}.\]
\end{prop}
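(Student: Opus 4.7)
My proof strategy reduces the uniform bound to a single-cell, single-time estimate by a union bound, and then establishes the single-cell estimate using the self-correcting dynamics of $\zeta$ via a bootstrap argument.

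\textbf{Step 1: Spatial-temporal discretization.}
First, by standard estimates on the maximum displacement of BBM (via many-to-one and Gaussian tail bounds, as used in the proof of Proposition~\ref{upper_bound}), with probability at least $1 - s^{-5}$ every particle remains in $[-2s, 2s]$ throughout $[0, s]$. Hence I need only control $\zeta(t,x)$ for $x$ in an interval of length $O(s)$. I would then discretize time with a grid $t_k = k\Delta$ where, say, $\Delta = 1/\log^3 s$. Between consecutive grid times, individual masses are non-increasing, so the only sources of growth in $\zeta(\cdot, x)$ are (a) branching, which at rate $1$ contributes a multiplicative factor $\le e^\Delta$, and (b) new mass entering the unit cell through Brownian motion. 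A Brownian tail estimate shows that with probability $\ge 1 - s^{-C}$, particles move less than, say, $1/2$ over any interval of length $\Delta$, so mass entering $(x-1, x+1)$ comes only from particles in a slightly enlarged cell. Combining these, if $\sup_y \zeta(t_k, y) \le Z\log s / 2$ then off a controlled event we retain $\zeta(t,\cdot) \le Z\log s$ for $t \in [t_k, t_{k+1}]$. A union bound over the $O(s \log^3 s)$ pairs $(t_k, x)$ reduces the problem to showing
\[
\mathbf P\{\zeta(t_k,x) > Z\log s / 2\} \le s^{-8}
\]
for each such pair.

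\textbf{Step 2: Single-cell estimate via self-correction.}
Fix $(t,x)$ and set $M = Z\log s/2$. The heuristic $\partial_t \zeta \approx \zeta(1-\zeta)$ says that sustained high density cannot exist. To formalize, I would argue: if $\zeta(s, y) \ge A$ for all $(s,y)$ in a space-time box $[t - \tau, t] \times (x - R, x + R)$, then any particle whose trajectory lies in this box loses mass at rate $\ge A$, so its mass at time $t$ is at most $e^{-A\tau}$. By a continuity estimate for the ancestral trajectory, particles in $(x-1, x+1)$ at time $t$ are, with probability $1 - O(e^{-cR^2/\tau})$, contained in $(x - R, x + R)$ over $[t-\tau, t]$ for suitable $R$. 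Hence, on the box-density event,
\[
\zeta(t,x) \le 2K_{\text{cell}}\, e^{-A\tau},
\]
where $K_\text{cell}$ is the number of nearby particles. On the complementary event --- that $\zeta(s, y) < A$ somewhere in the box --- I would split off a small sub-box and iterate.

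\textbf{Step 3: Bootstrap.}
Step 2 is useless in isolation because we have no a priori upper bound $A$ on $\zeta$ in the preceding space-time box. I would therefore set up an inductive scheme: start from the trivial bound $\zeta \le B_0 := n(s) \le e^{2s}$ (with probability $1 - e^{-s}$, say, from a standard estimate on $n(s)$), and show that if $\sup_y \zeta(r, y) \le B_n$ for $r \le t_k - \Delta'$, then Step 2 applied with $A = B_n/C_1$ and $\tau = C_2/B_n$ yields $\sup_y \zeta(r,y) \le B_{n+1} := B_n/C_3 + C_4 \log s$ for $r \in [t_k - \Delta', t_k]$. After $O(\log \log s)$ iterations, $B_n$ contracts to a fixed point of order $\log s$, delivering the required bound with $Z$ depending only on the implicit constants $C_1, \ldots, C_4$.

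\textbf{Main obstacle.}
The principal difficulty is controlling the \emph{inflow} of mass into the cell over the window $[t-\tau, t]$. Freshly-arriving particles carry whatever mass they had outside the high-density region, and even a modest number of such particles can spike $\zeta$. Ruling this out requires combining the bootstrap's control on $\zeta$ outside the cell with precise Brownian hitting-time and occupation-time estimates, all while keeping the iteration tight enough that $Z$ stays a bounded universal constant. Closing this loop --- i.e., quantifying inflow, out-motion, branching, and decay simultaneously to get a genuine contraction in the bootstrap --- is where I expect the bulk of the technical work to go.
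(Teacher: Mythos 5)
Your high-level intuition — self-correction plus a union bound over a spatio-temporal grid, bootstrapping an initially crude density bound down to $O(\log s)$ — is not how the paper argues, and as written it has a genuine gap that the paper's argument is designed specifically to avoid.

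The core problem is in the interaction between your Steps 2 and 3. Step 2 needs a \emph{lower} bound $\zeta \ge A$ throughout a space-time box to conclude that particles in the box lose mass; Step 3 only supplies \emph{upper} bounds $\zeta \le B_n$. Passing from ``$\zeta(t,x)$ is large at a point'' to ``$\zeta \ge A$ throughout a box'' is exactly the hard step, and ``split off a small sub-box and iterate'' does not resolve it: if density is spatially ragged, no sub-box need be uniformly dense. The paper's Lemma~\ref{lem:decrease_1} sidesteps this by making its hypothesis purely about the initial mass $A = \sum_{|x_j|<1/4} m_j$: it then defines the event $E$ that nearby total mass stays above $cA$, and observes that if $E$ holds the non-branching, non-escaping particles drive themselves down exponentially, while if $E^c$ holds the conclusion is trivially true because the mass near the origin has already collapsed. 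The starting cluster supplies its own density lower bound or the conclusion is free. Your Step 2 has no analogue of this dichotomy, so your bootstrap doesn't close.

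There is also a structural difference. You discretize time and aim at the pointwise bound $\mathbf{P}\{\zeta(t_k,x) > Z\log s/2\} \le s^{-8}$, starting the bootstrap from $B_0 = e^{2s}$. Apart from the circularity worry (the bootstrap's inductive hypothesis $\sup_y \zeta(r,y) \le B_n$ over all $y$ and $r$ is essentially the statement you are trying to prove), the contraction $B_{n+1} = B_n/C_3 + C_4\log s$ requires $\Theta(s)$ iterations to descend from $e^{2s}$, not $O(\log\log s)$. The paper never pays this cost: it defines stopping times $\tau_k$ (the first time after a $10^5/N$ cooldown that the local density $z$ reaches $N-1$, with $N = 10^7\log s$), so that by construction the density entering each window $[\tau_k, \tau_k+10^5/N]$ is at most $N$ (a single branching adds at most one unit of mass). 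One only needs Fact~\ref{fact:massdoub} (density stays below $10N$ in the short window) and Fact~\ref{fact:masslow} (it is back below $N-1$ by the end), each proved once with the a priori bound $N$ already in hand. No iterative contraction from an exponential starting value is ever needed.

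What you correctly identify as the ``main obstacle'' — controlling inflow of mass into the cell from outside — is in fact a large part of the paper's work: the proofs of Facts~\ref{fact:massdoub} and~\ref{fact:masslow} decompose the mass arriving near $0$ into contributions $Y_n$ (or $Z_n$) indexed by the distance of the time-$\tau_k$ ancestor, and bound far contributions via Proposition~\ref{prop:mass_travel}/Corollary~\ref{cor:mass_travel} (mass doesn't travel far over a short time) and close contributions via Corollary~\ref{cor:decrease_2} (total mass can't multiply quickly) together with Lemma~\ref{lem:decrease_1}. So your diagnosis of where the difficulty lies is accurate, but the proposal does not yet contain the mechanism — the stopping-time structure and the initial-mass formulation of the decay lemma — that the paper uses to actually close that loop.
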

Proving Proposition~\ref{prop:massbound} turns out to be a fair amount of work. In order that the idea is not obscured by detail, however, we set up the heart of the argument right away. 

Let $z(t,x) = \sum_{\{i: |X_i(t)-x|< 1/2\}}M_i(t)$. The differences between $z$ and $\zeta$ are that $z$ only counts 
mass within distance $1/2$ of $x$, and does not ignore the mass of particles at $x$ (should there be any). 

Let $z(t) = \sup_x z(t,x)$, and define a sequence $(\tau_i,i \ge 0)$ of stopping times as follows. 
Fix $s$ large and for the remainder of the section write $N=N(s)=10^7\log s$. Let $\tau_0=\inf\{t:z(t) \ge N-1\}$, and for $k \ge 0$ let $\tau_{k+1} = \inf\{t > \tau_k+10^5/N: z(t) \ge N-1\}$. 
Then $\tau_k \ge 10^5k/N$, so with $I=I(s) = \inf\{k: \tau_k \ge s\}$, we have $I \le \lceil Ns/10^5 \rceil$ and 
\[
\sup\{z(t),t \le s\} \le \sup \{z(t),t < \tau_{I}\}. 
\]

Notice that the sequence of stopping times ``ignores'' small time intervals $[\tau_k,\tau_k+10^5/N]$. 
However in any time interval $[\tau_k+10^5/N,\tau_{k+1})$, the function $z$ nowhere exceeds $N$ by the definition of the stopping time $\tau_{k+1}$. 
We thus have 
\begin{equation}\label{eq:mass_keybound}
\sup\{z(t),t \le s\} \le \sup \{z(t),t < \tau_{I}\} \le 
\max\pran{N,\sup_{k < I} \sup_{t \in [\tau_k,\tau_k+10^5/N]} z(t)}
\end{equation}
We prove the proposition by establishing the following facts. The first fact says that for $k < Ns/10^5$, if $z(\tau_k)$ is not too large then with high probability $z(t)$ is not too large for any $t \in [\tau_k,\tau_k+10^5/N]$. The second says that for such $k$, with high probability $z(\tau_k+10^5/N)$ is small. 
\begin{fact}\label{fact:massdoub}
For $s$ sufficiently large, for all $0 \le k < Ns/10^5$, 
\[
\p{\sup\{z(t),t \in [\tau_k,\tau_k+10^5/N]\} > 10N,z(\tau_k)\le N,k < I} < s^{-6}. 
\]
\end{fact}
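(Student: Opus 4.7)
The plan is to apply the strong Markov property at $\tau_k$ on the event $\{z(\tau_k)\le N,\,k<I\}$ and then dominate $z(t,x)$ throughout $[\tau_k,\tau_k+T]$ (with $T:=10^5/N$) by a ``frozen-mass'' process. Since along each lineage the mass is nonincreasing, $M_j(t)\le M_{a_j}(\tau_k)$ for every $j$ alive at time $t\in[\tau_k,\tau_k+T]$, where $a_j$ denotes the index of the ancestor of $j$ at time $\tau_k$. Setting $\tilde z(t,x):=\sum_{X_j(t)\in B(x,1/2)} M_{a_j}(\tau_k)$ and grouping by ancestor gives
\[
z(t,x)\le\tilde z(t,x)\le \sum_y M_y(\tau_k)\,n_y(t,B(x,1/2)),
\]
where $y$ ranges over particles alive at $\tau_k$ and $n_y(t,A)$ counts descendants of $y$ in $A$ at time $t$. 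Conditional on $\cF_{\tau_k}$, the subtrees at distinct $y$'s are independent BBMs of duration $T$, and $n_y(T)\sim\mathrm{Geom}(e^{-T})$. Discretizing $x$ at the integers, for any $x_0\in\Z$ and any $x\in[x_0-\tfrac12,x_0+\tfrac12]$,
\[
\sup_{t\in[\tau_k,\tau_k+T]}\tilde z(t,x)\le W_{x_0}:=\sum_y M_y(\tau_k)\,n_y(T)\,\I{V_y(x_0)},
\]
where $V_y(x_0)$ is the event that some descendant of $y$ enters $B(x_0,1)$ during $[\tau_k,\tau_k+T]$. I split $W_{x_0}=W_{x_0}^{\mathrm{near}}+W_{x_0}^{\mathrm{far}}$ according to whether $X_y(\tau_k)\in B(x_0,3)$.

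For the near part, the hypothesis $z(\tau_k)\le N$ together with a covering argument yields $\sum_{X_y\in B(x_0,3)}M_y(\tau_k)\le 6N$. The key observation is that $f(M):=\log\E{e^{\theta M n_y(T)}}$ is convex in $M$ with $f(0)=0$, so $f(M)\le M f(1)\le M(\theta+2Te^\theta)$ for $M\in[0,1]$ and any $\theta$ with $(1-e^{-T})e^\theta\le\tfrac12$. Because this bound is \emph{linear} in $M$, the total log-MGF of $W_{x_0}^{\mathrm{near}}$ depends only on $\sum M_y\le 6N$ and \emph{not} on the number of particles in $B(x_0,3)$ (which is \emph{a priori} unbounded). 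Optimizing at $\theta=\log(1/(4T))=\Theta(\log\log s)$ gives
\[
\Cprob{W_{x_0}^{\mathrm{near}}>9N}{\cF_{\tau_k}}\le \exp\bigl(-3N\log(1/(4eT))\bigr),
\]
which is super-polynomially small in $s$.

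For the far part, $V_y(x_0)$ requires a BBM lineage starting at $X_y(\tau_k)\notin B(x_0,3)$ to traverse more than $2$ units in time $T=10^{-2}/\log s$; many-to-one together with the Brownian tail yields $\p{V_y(x_0)}\le 2e^T\exp(-(|X_y-x_0|-1)^2/(2T))$. Combining with $\E{n_y(T)^2}=O(1)$ via Cauchy--Schwarz and summing over annular shells of width $1$ around $x_0$ (each of total mass at most $4N$) gives $\E{W_{x_0}^{\mathrm{far}}\mid\cF_{\tau_k}}\le Ns^{-50}$, whence $\Cprob{W_{x_0}^{\mathrm{far}}>N}{\cF_{\tau_k}}\le s^{-49}$ by Markov. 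A union bound over $x_0\in\Z\cap[-3s,3s]$, together with a standard many-to-one bound on $\p{\exists i,t\le s+T:|X_i(t)|>3s}$, concludes the proof. The hardest step is the Chernoff bound for the near part: the hypothesis controls only the total local mass, so the log-MGF estimate must be \emph{exactly} linear in $M_y$, since any additive slack per particle would be multiplied by a potentially exponential particle count and ruin the bound.
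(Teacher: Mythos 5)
Your argument is correct and follows essentially the same strategy as the paper's proof: freeze masses at the stopping time $\tau_k$ via the strong Markov property, discretize the spatial variable, split the frozen-mass contribution into nearby ancestors (controlled by a Chernoff bound for mass-weighted geometric sums whose exponent depends on the total nearby mass, not the particle count -- exactly the content of Lemma~\ref{lem:decrease_2}/Corollary~\ref{cor:decrease_2} in the paper) and far-off ancestors (controlled by Gaussian tails for the mass that travels more than a unit distance in time $10^5/N$, the content of Proposition~\ref{prop:mass_travel}/Corollary~\ref{cor:mass_travel}), and finish with a union bound on $O(s)$ discrete locations plus the confinement event $\{\text{all particles stay in } [-3s,3s]\}$. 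The only divergence is cosmetic: for the far-off contribution you apply Cauchy--Schwarz to decouple $n_y(T)$ from $\I{V_y(x_0)}$, whereas the paper avoids this by bounding the first moment of the frozen mass carried by far-travelling descendants directly.
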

\begin{fact}\label{fact:masslow}
For $s$ sufficiently large, for all $0 \le k < Ns/10^5$, 
\[
\p{z(\tau_k+10^5/N) \ge N-1,k < I} < s^{-6}. 
\]
\end{fact}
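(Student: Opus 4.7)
My plan is to exploit the density self-correction principle from Section 2 in combination with Fact~\ref{fact:massdoub}. The heuristic $\tfrac{d}{dt} z(t,y) \approx z(t,y)(1 - z(t,y))$ shows that once $z(t,y)$ reaches order $N$, it decays back to $O(1)$ on a timescale $O(1/N)$, much shorter than the full interval of length $10^5/N$. Hence there is ample room to force $z$ strictly below $N-1$ by time $\tau_k + 10^5/N$.

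First I would restrict to the event $G = \{z(t) \le 10N \text{ for all } t \in [\tau_k,\tau_k+10^5/N]\}$. Since $z(t,y)$ can increase only when a particle either branches in $(y-1/2,y+1/2)$ or enters that interval, each such event contributes at most one inherited mass $\le 1$, so $z(\tau_k) \le z(\tau_k^-) + 1 \le N$ a.s. Fact~\ref{fact:massdoub} then yields $\p{G^c,\,k<I}<s^{-6}$, and it remains to show that on $G$, $z(\tau_k+10^5/N,y)<N-1$ for every $y$.

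Fix $y$ and set $\nu=\tau_k+10^5/N$; suppose $z(\nu,y)\ge N-1$. Then at least $N-1$ particles occupy $B_y=(y-1/2,y+1/2)$ at time $\nu$. I would then trace ancestries backward using Brownian-bridge estimates: since $\nu-\tau_k=10^{-2}/\log s$ is tiny, each bridge of this length starting and ending in $B_y$ stays within $(y-1,y+1)$ throughout with probability $1-e^{-\Omega(\log s)}$. Combining this with a bound on branching events (each particle branches at rate $1$, so essentially none do) and the many-to-one lemma, I would conclude that with probability $\ge 1-s^{-7}$ at least $N/2$ of the particles contributing to $z(\nu,y)$ have ancestral trajectories lying in $(y-1,y+1)$ throughout $[\tau_k+5\cdot 10^4/N,\nu]$. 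During this sub-interval, each such particle $i$ sees local density $\zeta(s,X_{i,\nu}(s))\ge$ (cluster mass) $-1$, so as long as the cluster mass stays $\ge N/4$, each individual mass is multiplied by $\exp(-\Omega(10^4))$ over the sub-interval, making $z(\nu,y)\ll 1$ and contradicting the assumption.

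The main obstacle is the bootstrap step: rigorously maintaining a lower bound of order $N$ on the cluster mass throughout the sub-interval while it is being consumed, and controlling the influx of mass from outside $B_y$ — here the surrounding bound $10N$ on $G$ is essential. I would likely implement this either through a backwards-in-time inductive argument, or directly via the identity $M_i(\nu) = \exp(-\int_{\tau_k}^\nu \zeta(s,X_{i,\nu}(s))\,ds)\cdot M_{i,\nu}(\tau_k)$, arguing that the sum over $i$ contributing to $z(\nu,y)$ of these integrals is too large to be compatible with $z(\nu,y)\ge N-1$ unless many particles have ancestral trajectories leaving $(y-1,y+1)$ for a substantial fraction of the interval — a Brownian event whose probability we have already bounded.
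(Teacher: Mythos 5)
Your proposal captures the right intuition (a large concentrated mass should decay exponentially during the interval $[\tau_k, \tau_k+10^5/N]$) and correctly identifies the main difficulty, which you call the ``bootstrap step.'' But you don't resolve it, and this is not a detail: it is the heart of the argument, and the paper introduces a genuine idea to close it which your sketch is missing.

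The paper resolves the circularity via Lemma~\ref{lem:decrease_1}, which works \emph{forward} from time $\tau_k$ rather than backward from $\nu$, and decomposes the particles descended from a given dense cluster into three categories: those whose ancestral trajectory branches before time $\nu$ (their total mass is controlled by the mass-growth estimate of Lemma~\ref{lem:decrease_2}/Corollary~\ref{cor:decrease_2}); those whose trajectory does not branch but wanders far (controlled by Gaussian tail bounds); and the non-branching trajectories that stay near the cluster. The key observation for the third group is \emph{monotonicity}: the total mass of non-branching trajectories that remain near the cluster is nonincreasing in time. Hence either it stays above some threshold $cA$ throughout the interval --- in which case every constituent mass is multiplied by $e^{-\Omega(cA\cdot 10^5/N)} = e^{-\Omega(1)}$ and the total at time $\nu$ is small --- or it drops below $cA$ at some point and then \emph{stays} below $cA$, which is already a satisfactory bound. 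This dichotomy dissolves the ``maintaining a lower bound while it is being consumed'' problem you flag. Working backward from a conditioned terminal configuration, as you suggest, makes the probabilistic estimates much harder because the conditioning couples everything.

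There is also a secondary but real gap: you invoke Fact~\ref{fact:massdoub} to conclude $\p{G^c, k<I}<s^{-6}$, but Fact~\ref{fact:massdoub} controls the interval only on the event $\{z(\tau_k)\le N\}$. Removing that conditioning requires knowing $z(\tau_{k-1}+10^5/N)<N-1$ with high probability, which is an instance of the very fact you are trying to prove at index $k-1$. The paper handles this by an explicit induction over $k$ at the end of the proof of Fact~\ref{fact:masslow}; your proposal as written would be circular without setting this up. Finally, your spatial control is also thinner than the paper's: the paper decomposes $z(\nu,0)$ by where the time-$\tau_k$ ancestor was (the quantities $Z_n$, $D_\ell$) and uses Corollary~\ref{cor:mass_travel} to handle the influx from $|n|\ge 2$; the bound $z\le 10N$ on $G$ alone is not enough to control influx without this decomposition.
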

Assuming these two facts, the proposition follows easily. 
\begin{proof}[Proof of Proposition~\ref{prop:massbound}]
Fix $k \le Ns/10^5$. 
Note that if $z(\tau_{k-1}+10^5/N) < N-1$ then $z(\tau_{k}^-) < N-1$. Since mass only increases by branching, it follows 
that almost surely a single branching event at time $\tau_{k}$ causes $z$ to increase above $N-1$. As all masses are at most $1$ and branching is binary, it follows that in this case almost surely $z(\tau_k) \le z(\tau_k^-)+1 < N$. 
With Fact~\ref{fact:masslow}, this implies that 
\begin{align*}
\p{z(\tau_{k}) > N,k < I}
& \le \p{z(\tau_{k-1}+10^5/N)\ge N-1,k<I}\\
& \le  \p{z(\tau_{k-1}+10^5/N)\ge N-1,k-1<I} \\
&< s^{-6}\, .
\end{align*}

We now use that for any events $A,B,C$ we have $\p{A\cap C} \le \p{A \cap B\cap C} + \p{B^c \cap C}$. 
By Fact~\ref{fact:massdoub} and the preceding bound, we obtain that for $0 \le k < Ns/10^5$, 
\[
\p{\sup\{z(t),t \in [\tau_k,\tau_k+10^5/N]\} > 10N, k < I} \le 2s^{-6}
\]
A union bound and (\ref{eq:mass_keybound}) then yield 
\begin{align*}
\p{\sup_{t \le s} z(t) > 10N} & 
\le 
\p{ \sup_{k <I} \sup_{t \in [\tau_k,\tau_k+10^5/N]} z(t) > 10N} \\
& \le \sum_{k=0}^{\lfloor Ns/10^5\rfloor} \p{\sup_{t \in [\tau_k,\tau_k+10^5/N]} z(t) > 10N, k < I} \\
& \le \pran{1+\frac{Ns}{10^5}} \cdot 2s^{-6}\, \\
& < s^{-4}\, ,
\end{align*}
the last inequality holding for $s$ large. 
Finally, it is easy to see that $\sup_x \zeta(t,x) \le 2 z(t)$, so the same bound holds for 
$\p{\sup_{t \le s} \sup_x \zeta(t,x) > 20N}$, which proves the proposition.\end{proof}

The reader who is willing to believe the Facts~\ref{fact:massdoub} and~\ref{fact:masslow} without proof -- or who is impatient to see how Proposition~\ref{prop:massbound} is used to prove the lower bound from the main theorem -- could skip directly to Section~\ref{sec:lower} at this point. 
 
\subsection{Proofs of Facts~\ref{fact:massdoub} and~\ref{fact:masslow}}\label{sec:twofacts}
We first prove a handful of technical estimates required for the proofs. 
The first shows that a fixed mass of particles is extremely unlikely to quickly increase its total mass. 
Recall the definition of $\mathbf{P}_{\bx,\bm}$ from just before the start of Section~\ref{sec:ub}. 
\begin{lem}\label{lem:decrease_2} 
Fix $\bx=(x_1,\ldots,x_k) \in \R^k$ and $\bm = (m_1,\ldots,m_k) \in [0,1]^k$. Under $\mathbf{P}_{\bx,\bm}$, for 
$1 \le j \le k$ let $G_j(s) = \# \{i: j_{i,s}(0)=j\}$ be the number of time-$s$ descendants of $x_j$. 
Then for any $J\subset \{1,\ldots, k\}$, any  $x\ge\sum_{j \in J} m_j$, for all $t \le \log 2$ and all $\delta > 0$, 
\[
\psub{\bx,\bm}{\sum_{j\in J} m_jG_j(t) \ge (1+\delta) x} \le 2 \pran{2^{1+\delta} (1-e^{-t})^{\delta}}^x.
\]
\end{lem}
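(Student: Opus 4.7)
The plan is a Chernoff-type argument that exploits the fact that each $G_j(t)$ is $\mathrm{Geom}(e^{-t})$-distributed (the standard property of the Yule process) and that the $(G_j(t))_{j\in J}$ are independent under $\mathbf{P}_{\bx,\bm}$. Write $p=e^{-t}$ and $q=1-p$; the hypothesis $t\le \log 2$ is equivalent to $q\le 1/2$, and $\E{e^{\lambda G_j(t)}}=pe^\lambda/(1-qe^\lambda)$ whenever $qe^\lambda<1$.

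The key technical step will be a log-convexity reduction that strips the masses $m_j$ out of the moment generating functions: for $\lambda\ge 0$ and $m\in[0,1]$,
\[
\E{e^{\lambda m G_j(t)}} \le \pran{\E{e^{\lambda G_j(t)}}}^{m}.
\]
This follows because the cumulant generating function $K(s)=\log\E{e^{sG_j(t)}}$ is convex in $s$ and satisfies $K(0)=0$: convexity applied to $\lambda m=(1-m)\cdot 0 + m\cdot\lambda$ gives $K(\lambda m)\le m K(\lambda)$. Combined with independence, with the hypothesis $\sum_{j\in J}m_j\le x$, and with $\E{e^{\lambda G_j(t)}}\ge 1$ (valid for $\lambda\ge 0$), Markov's inequality will yield
\[
\psub{\bx,\bm}{\sum_{j\in J}m_j G_j(t)\ge (1+\delta)x} \le \pran{\E{e^{\lambda G_j(t)}}\, e^{-\lambda(1+\delta)}}^{x}.
\]

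It then remains to pick $\lambda$. The clean choice is $\lambda=\log(1/(2q))$, which is nonnegative exactly because $q\le 1/2$, and which is tuned so that $qe^\lambda=1/2$. Under this choice one computes $\E{e^{\lambda G_j(t)}}=p/q$ and $e^{-\lambda(1+\delta)}=(2q)^{1+\delta}$, so
\[
\pran{\tfrac{p}{q}\cdot(2q)^{1+\delta}}^{x} = \pran{p\cdot 2^{1+\delta}\cdot q^\delta}^{x} \le \pran{2^{1+\delta} q^\delta}^{x},
\]
which is in fact sharper than the stated bound (the factor of $2$ is slack). The boundary case $q=1/2$ (i.e.\ $t=\log 2$) forces $\lambda=0$ and the Chernoff bound collapses to the trivial value $1$; one verifies directly that $1\le 2(2^{1+\delta}(1/2)^\delta)^x = 2\cdot 2^x$, so the stated bound still holds uniformly.

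The main obstacle I anticipate is spotting the right free parameter: without the log-convexity reduction, the product $\prod_{j\in J} pe^{\lambda m_j}/(1-qe^{\lambda m_j})$ is unwieldy, because the $m_j$'s appear both as linear weights and inside the denominators. Once that reduction is in hand the ``right'' value $\lambda=\log(1/(2q))$ is essentially forced by the desired factor $q^\delta$ in the tail bound, and the remaining manipulations are immediate.
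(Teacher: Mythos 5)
Your proof is correct, and it takes a genuinely different route from the paper's. The paper reduces to the normalized case $\sum_{j\in J}m_j=x$ and then invokes a standalone tail bound for weighted sums of i.i.d.\ geometrics (Lemma~\ref{lem:geom_ld}), whose proof runs a Chernoff argument and then bounds the resulting product $\prod_i(1-\eps)/(1-\eps e^{cVp_i})$ by an optimization over the weight vector $(p_i)$ subject to $\max_i p_i\le 1/V$ and $\sum_i p_i=1$. Your approach replaces that optimization step with the cleaner log-convexity inequality $\e[e^{\lambda m G}]\le\bigl(\e[e^{\lambda G}]\bigr)^m$ for $m\in[0,1]$, $\lambda\ge 0$, which uses the constraint $m_j\le 1$ directly rather than the ratio $\max_j m_j/\sum_j m_j$; together with $\e[e^{\lambda G}]\ge 1$ and $\sum_{j\in J}m_j\le x$ this also absorbs the paper's ``add particles to force $\sum m_j=x$'' reduction. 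Both proofs then tune the free parameter to $e^\lambda=1/(2(1-e^{-t}))$. What your route buys: it is shorter, avoids an optimization claim that takes some care to justify, and in fact yields the slightly sharper bound $(2^{1+\delta}(1-e^{-t})^\delta e^{-t})^x$, so the prefactor $2$ in the statement is pure slack (your remark that the boundary case $t=\log 2$ ``needs'' the factor $2$ is unnecessary: there the Chernoff bound gives $1\le 2^x=(2^{1+\delta}(1/2)^\delta)^x$, so even the sharper bound holds). What the paper's route buys is a reusable lemma stated for general weights, though in this paper it is applied only here.
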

\begin{proof}
We may clearly assume $J=\{1,\ldots,k\}$. 
Also, adding particles to increase the mass of the starting configuration can only increase the probability we aim to bound, so we may assume that $x = \sum_{i=1}^k m_i$. 
The random variables $(G_j(s),1 \le j \le k)$ are i.i.d.\  and are Geom$(e^{-s})$-distributed (see, e.g., \cite{mckean75}). 
Lemma~\ref{lem:geom_ld} provides upper tail bounds for weighted sums of geometric random variables where the individual coefficients are small compared with their sum. Using that lemma (with $\eps=1-e^{-t}$ -- this is where we require that 
$t < \log 2$), the result follows. 
\end{proof}
Since $G_j(s)$ is non-decreasing in $s$, we have
\[
\sup_{s \in [0,t]} \sum_{\{i:j_{i,s}(0)\in J\}} M_{i,s}(0) = \sup_{s \in [0,t]} \sum_{j\in J} m_j G_j(s) = \sum_{j \in J} m_j G_j(t)\, .
\]
Combining this with the preceding lemma thus also yields the following bound. 
\begin{cor}\label{cor:decrease_2}
With the hypotheses and notation of Lemma~\ref{lem:decrease_2}, 
\[
\psub{\bx,\bm}{\sup_{s \in [0,t]} \sum_{\{i:j_{i,s}(0)\in J\}} M_{i,s}(0) \ge (1+\delta) x} \le 2\pran{2^{1+\delta} (1-e^{-t})^{\delta}}^x.
\]
\end{cor}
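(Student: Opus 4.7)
The plan is a one-line reduction to Lemma~\ref{lem:decrease_2} via a monotonicity observation which is essentially spelled out in the display preceding the statement.

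First, I would unpack the sum $\sum_{\{i:j_{i,s}(0)\in J\}} M_{i,s}(0)$: under $\mathbf{P}_{\bx,\bm}$, the time-$0$ configuration consists of particles of mass $m_j$ at positions $x_j$, so $M_{i,s}(0)=m_j$ whenever $j_{i,s}(0)=j$. Grouping time-$s$ particles by the label of their time-$0$ ancestor, the sum collapses to $\sum_{j \in J} m_j G_j(s)$.

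Second, I would note that $G_j(\cdot)$ is pathwise non-decreasing on $[0,t]$ — in BBM, descendants of a fixed particle only ever appear, never disappear. Since each $m_j \ge 0$, the weighted sum $\sum_{j \in J} m_j G_j(s)$ is non-decreasing in $s$, and so its supremum on $[0,t]$ is attained at $s=t$:
\[
\sup_{s \in [0,t]} \sum_{\{i:j_{i,s}(0)\in J\}} M_{i,s}(0) \;=\; \sum_{j \in J} m_j G_j(t).
\]
The event whose probability we wish to bound is therefore identical to the event $\{\sum_{j \in J} m_j G_j(t) \ge (1+\delta)x\}$ controlled by Lemma~\ref{lem:decrease_2}, and plugging in that lemma immediately yields the stated inequality with no loss in the constants.

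There is no real obstacle; the whole content is the observation that replacing a time-$s$ mass by the (larger) time-$0$ ancestral mass makes the relevant statistic monotone in $s$, which converts the supremum in the corollary into the pointwise-at-$t$ quantity already handled by the preceding lemma. In particular the hypothesis $t \le \log 2$ is inherited without change from Lemma~\ref{lem:decrease_2}.
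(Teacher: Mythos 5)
Your argument is correct and is precisely the paper's: rewrite $\sum_{\{i:j_{i,s}(0)\in J\}} M_{i,s}(0) = \sum_{j\in J} m_j G_j(s)$, observe that each $G_j$ is pathwise non-decreasing so the supremum over $[0,t]$ is attained at $s=t$, and then invoke Lemma~\ref{lem:decrease_2}. Nothing is missing.
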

The next proposition says that mass does not travel far in a short time, even once branching is taken into account. 
\begin{prop}\label{prop:mass_travel}
Fix $\bx=(x_1,\ldots,x_k) \in \R^k$ and $\bm = (m_1,\ldots,m_k) \in (0,1]^k$. Then for all  $x \ge  \sum_{1 \le i \le k} m_i$, 
for all $t > 0$, $L \ge 1/2$ and $v > 0$, we have 
\[
\psub{\bx,\bm}{\sum_{\{i:X_i(t)-X_{i,t}(0)> L\}} M_{i,t}(0) > vx} \le \frac{\exp(t-L^2/(2t))}{v}\, .
\]
\end{prop}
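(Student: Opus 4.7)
The plan is to prove this by a straightforward Markov's inequality, with the expectation computed via the many-to-one lemma. The crucial observation that makes everything simple is that $M_{i,t}(0)$ is just the \emph{initial} mass of the ancestor of $X_i(t)$ at time $0$; under $\mathbf{P}_{\bx,\bm}$ this equals $m_{j_{i,t}(0)}$, a deterministic function of the starting configuration and the genealogy. In particular, the sum $\sum_{\{i:X_i(t)-X_{i,t}(0)>L\}} M_{i,t}(0)$ does not involve the mass-decay dynamics at all; it is purely a weighted counting statistic on a standard BBM. This is why no competition-related estimate is needed.

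First I would decompose the sum according to the initial ancestor. Let $J_j = \{i : j_{i,t}(0)=j\}$ be the indices of time-$t$ descendants of the $j$-th starting particle. Since the mass dynamics do not alter motion or branching, the BBM trees rooted at the distinct starting particles $x_1,\ldots,x_k$ are independent standard BBMs, and the displacements $X_i(t)-X_{i,t}(0)$ for $i \in J_j$ have the same joint law as particle positions in a standard BBM started at $0$. Therefore
\[
\esub{\bx,\bm}\!\pran{\sum_{\{i:X_i(t)-X_{i,t}(0)>L\}} M_{i,t}(0)}
= \sum_{j=1}^{k} m_j\, \e\!\pran{\sum_{i \in J_j} \I{X_i(t)-x_j > L}}.
\]

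Next I would apply the many-to-one lemma to each inner expectation: each equals $e^t\, \p{B(t)>L}$, where $B$ is a standard Brownian motion. Combined with the standard Chernoff bound $\p{B(t)>L}\le \exp(-L^2/(2t))$ (obtained by optimizing $\lambda>0$ in $\E{e^{\lambda B(t)}}e^{-\lambda L}$) and the hypothesis $\sum_j m_j \le x$, this gives
\[
\esub{\bx,\bm}\!\pran{\sum_{\{i:X_i(t)-X_{i,t}(0)>L\}} M_{i,t}(0)} \le x \exp\!\pran{t - L^2/(2t)}.
\]
Markov's inequality applied to the nonnegative random variable on the left then delivers the claimed bound $\exp(t - L^2/(2t))/v$.

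There is essentially no obstacle here—the proof is a one-line first-moment calculation once one sees that $M_{i,t}(0)$ is ancestor-initial mass, not time-$t$ mass. The hypothesis $L\ge 1/2$ plays no logical role in this argument; presumably it is carried along only so the right-hand side is a useful bound in the applications that invoke this proposition.
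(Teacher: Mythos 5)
Your proof is correct and follows essentially the same route as the paper's: decompose the weighted sum by initial ancestor, observe that $M_{i,t}(0)=m_{j_{i,t}(0)}$ so the statistic is purely genealogical, apply the many-to-one lemma together with a Gaussian tail bound, and finish with Markov's inequality. Your remark that $L\ge 1/2$ is logically superfluous is also right (the Chernoff bound $\p{B(t)>L}\le e^{-L^2/(2t)}$ holds for all $L\ge 0$), so the hypothesis is indeed just carried along for convenience in applications.
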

\begin{proof}
We begin with a few simplifying assumptions. First, we may clearly assume that $x_i=0$ for all $i\le k$. Next, adding particles to the system at time $0$ can only increase the probability we aim to bound, so we may assume that $x = \sum_{i=1}^k m_i$. 

For $j \le k$ write $S_j = \{i \le n(t): j_{i,t}(0)=j\}$  
for the set of indices of time-$t$ descendants of $x_j$. 
Then let 
$R_j = \{i \in S_j: X_i(t)-X_{i,t}(0) > L\}$, so that 
\[
\sum_{\{i:X_i(t)-X_{i,t}(0)> L\}} M_{i,t}(0) = 
\sum_{j=1}^k \sum_{i \in R_j} M_{i,t}(0) = 
\sum_{j=1}^k m_j |R_j|\, .
\]
By the many-to-one lemma, for $W$ a one-dimensional Brownian motion,
\[
\E{|R_j|}= e^t \p{W_t-W_0> L}\le \exp(t-L^2/(2t))
\]
for $L \ge 1/2$.
This bound does not depend on $j \le k$. It then follows by Markov's inequality that for $v > 0$, 
\begin{align*}
\p{\sum_{\{i:X_i(t)-X_{i,t}(0)> L\}} M_{i,t}(0) > vx} & = 
\p{\sum_{j \le k} m_j |R_j| > vx}\\
& \le 
\frac{\E{\sum_{j \le k} m_j |R_j|}}{vx}\\
& \le 
 \frac{\exp(t- L^2/(2t))}{v}\, ,
\end{align*}
where we have used in the last inequality that $\sum_{j \le k} m_j = x$. 
\end{proof}
In the sequel, we in fact use the following corollary, which extends Proposition~\ref{prop:mass_travel} by considering all times in an interval $[0,t]$, rather than a fixed time $t>0$, at the cost of a slightly weaker bound. 
\begin{cor}\label{cor:mass_travel}
Under the conditions of Proposition~\ref{prop:mass_travel}, for all $t_0 > 0$, $L\ge 1/2$ and $v > 0$,  and all $x \ge \sum_{i\le k} m_i$, 
\[
\psub{\bx,\bm}{\sup_{t \le t_0} \sum_{\{i:X_i(t)-X_{i,t}(0)\ge L\}} M_{i,t}(0) > 2vx} \le \frac{2 \exp(t_0-L^2/(2t_0))}{v} \, .
\]
\end{cor}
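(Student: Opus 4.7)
The plan is to replace the supremum over $t \le t_0$ by the expectation at a single time $t_0$, by dominating the integrand pathwise by a quantity that is non-decreasing in $t$. The natural candidate is the number of ``marked'' descendants, where a particle becomes marked the first time its ancestral trajectory exceeds displacement $L$ from its time-$0$ starting point; this count can only grow, and it dominates the count of particles currently displaced by $L$.

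Adopting the setup from the proof of Proposition~\ref{prop:mass_travel}, I may assume $x_j=0$ for all $j\le k$ and $x = \sum_{j=1}^k m_j$. Writing $S_j(t) = \{i \le n(t): j_{i,t}(0)=j\}$, define
\[
R_j(t) = \#\{i \in S_j(t) : X_i(t) \ge L\}, \qquad \tilde R_j(t) = \#\{i \in S_j(t) : \sup_{s \le t} X_{i,t}(s) \ge L\}.
\]
Trivially $R_j(t) \le \tilde R_j(t)$. Moreover $t\mapsto \tilde R_j(t)$ is almost surely non-decreasing: every descendant of a marked particle inherits the mark through its ancestral prefix, and each particle alive at time $t$ has at least one descendant at every later time (there is no death in BBM). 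Consequently
\[
\sup_{t \le t_0} \sum_{\{i:X_i(t) - X_{i,t}(0)\ge L\}} M_{i,t}(0) \,=\, \sup_{t \le t_0} \sum_{j=1}^k m_j R_j(t) \,\le\, \sum_{j=1}^k m_j \tilde R_j(t_0).
\]

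For the expectation, the many-to-one lemma applied to the BBM subpopulation descended from the $j$-th initial particle, combined with the reflection principle, gives
\[
\E{\tilde R_j(t_0)} \,=\, e^{t_0}\, \p{\sup_{s \le t_0} W(s) \ge L} \,=\, 2 e^{t_0}\, \p{W(t_0) \ge L} \,\le\, 2\exp\pran{t_0 - L^2/(2t_0)},
\]
where $W$ is a standard Brownian motion and the last inequality is the Gaussian tail bound used in the proof of Proposition~\ref{prop:mass_travel} (valid for $L \ge 1/2$). Markov's inequality now yields
\[
\psub{\bx,\bm}{\sum_{j=1}^k m_j \tilde R_j(t_0) > 2vx} \,\le\, \frac{2 x \exp(t_0 - L^2/(2t_0))}{2vx} \,=\, \frac{\exp(t_0 - L^2/(2t_0))}{v},
\]
which is a factor of two sharper than the stated bound; either suffices for the downstream applications.

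There is no real analytic obstacle. The single point worth being careful about is the pathwise monotonicity of $\tilde R_j$, which one should record explicitly: it relies on the fact that BBM has no death, so once a particle's ancestral trajectory achieves the threshold, every descendant in its subtree inherits the mark, and the number of such descendants is pointwise non-decreasing in $t$.
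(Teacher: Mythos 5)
Your proof is correct, and it takes a genuinely different route from the paper's. The paper's argument is a maximal-inequality argument: introduce the stopping time $\tau$ at which the displaced mass first exceeds $2vx$, apply the strong Markov property at $\tau$, and use a symmetry/reflection argument to say that, conditionally on $\tau\le t_0$, the displaced mass at the fixed time $t_0$ still exceeds $vx$ with conditional probability at least $1/2$; combining this with the bound at the single time $t_0$ from Proposition~\ref{prop:mass_travel} gives the extra factor of $2$ in the constant. You instead construct a pathwise non-decreasing dominating process — replace ``displaced at time $t$'' by ``ever displaced by time $t$'' — so that the running supremum is simply the value at $t_0$, and then apply the many-to-one lemma, the reflection principle for the spine Brownian motion, and Markov's inequality directly. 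Both are standard techniques; yours avoids the slightly delicate conditioning step at the stopping time (which the paper leaves terse) and, because the reflection principle enters at the level of the spine rather than through the stopped process, it even yields the bound $\exp(t_0-L^2/(2t_0))/v$, a factor of two sharper than stated. The one point you correctly flag as needing care — the monotonicity of $\tilde R_j$, which rests on the absence of death in BBM and on ancestral trajectories of descendants extending those of their ancestors — is indeed the crux and you handle it cleanly.
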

\begin{proof}
Consider the stopping time 
\[
\tau = \inf\left\{t: \sum_{\{i:X_i(t)-X_{i,t}(0)\ge L\}} M_{i,t}(0) > 2vx \right\}\, .
\]
By symmetry, 
\[
\Cprob{\sum_{\{i:X_i(t_0)-X_{i,t_0}(0)\ge L\}} M_{i,t_0}(0) > vx}{\tau \le t_0} \ge \frac{1}{2}\, ,
\]
and the corollary follows. 
\end{proof}

The next lemma says that a large, concentrated mass will quickly decay; once we prove this we will have all the tools we need to establish Facts~\ref{fact:massdoub} and~\ref{fact:masslow}. 
\begin{lem}\label{lem:decrease_1}
There exist $t_0>0$ and $C > 0$ such that the following holds. 
Fix $\bx=(x_1,\ldots,x_k) \in \R^k$ and $\bm = (m_1,\ldots,m_k) \in [0,1]^k$. 
Let $J = \{j: |x_j| < 1/4\}$, and suppose $A = \sum_{j \in J} m_j > C$. 
Then for all $t \in [500/A, t_0]$, 
setting $I = \{i: j_{i,t}(0) \in J\}$ we have 
\[
\mathbf{P}_{\bx,\bm}\left\{\sum_{i \in I} M_i(t) > A/24\right\} \le 2e^{-200A}\, .
\]
\end{lem}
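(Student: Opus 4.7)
The plan is to split $\sum_{i \in I} M_i(t)$ into a contribution from descendants whose ancestral path has stayed close to its starting point and the rest, and analyse the two pieces separately. The essential observation for the close contribution is that such particles all lie in a common window of length less than $1$, so each one fully contributes to the competitive pressure felt by the others; this forces their total mass to satisfy a logistic-type differential inequality and decay like $1/t$. Fix $L = 1/8$ and set $I_{\mathrm{close}}(s) = \{i \in I(s) : |X_{i,s}(u) - X_{i,s}(0)| \le L\ \forall u \le s\}$, $I_{\mathrm{wand}}(s) = I(s) \setminus I_{\mathrm{close}}(s)$, and $a^*(s) = \sum_{i \in I_{\mathrm{close}}(s)} M_i(s)$.

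For the probabilistic setup, choose $t_0$ sufficiently small (with $\delta = 1$) that $2^{1+\delta}(1-e^{-t_0})^{\delta} \le e^{-200}$; this gives $t_0 \lesssim e^{-200}$, and setting $C = 500/t_0$ ensures that $500/A < t_0$ whenever $A > C$. Corollary~\ref{cor:decrease_2} then yields
\[
\mathbf{P}_{\bx,\bm}\Bigl\{\sup_{s \le t_0}\sum_{j \in J} m_j G_j(s) > 2A\Bigr\} \le 2 e^{-200A} .
\]
A parallel argument---combining Corollary~\ref{cor:mass_travel} with a Chernoff-style exponential estimate on the weighted wandering mass via the many-to-one lemma---should produce
\[
\mathbf{P}_{\bx,\bm}\Bigl\{\sum_{i \in I_{\mathrm{wand}}(t)} M_i(t) > A/48\Bigr\} \le e^{-200A},
\]
using $M_i(t) \le M_{i,t}(0)$. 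On the intersection of these favourable events one has $\sum_{i \in I_{\mathrm{close}}(t)} M_{i,t}(0) \le 2A$.

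For the ODE step, since $J \subset (-1/4, 1/4)$ and close lineages shift by at most $L < 1/4$, every close particle at time $s$ lies in $(-3/8, 3/8)$; hence any two close particles are within distance $<1$, and for each $i \in I_{\mathrm{close}}(s)$,
\[
\zeta(s, X_i(s)) \ge a^*(s) - M_i(s) \ge a^*(s) - 1 .
\]
Summing the identity $M_{i,t}(s) = M_{i,t}(0) \exp\!\bigl(-\int_0^s \zeta(u, X_{i,t}(u))\,du\bigr)$ over $i \in I_{\mathrm{close}}(t)$ therefore gives
\[
a^*(t) \le 2A\, e^t \exp\!\Bigl(-\int_0^t a^*(u)\,du\Bigr) .
\]
Setting $F(t) = \int_0^t a^*(u)\,du$, this reads $F'(t) e^{F(t)} \le 2A e^t$, and integrating yields $e^{F(t)} \le 1 + 2A(e^t - 1)$, so $a^*(t) \le 2Ae^t/(1 + 2A(e^t-1))$. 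For $t \ge 500/A$ the denominator is at least $1001$ and for $t \le t_0 \le 1$ the numerator is at most $4A$, hence $a^*(t) \le A/250 < A/48$. Combining with the wandering bound, $\sum_{i \in I} M_i(t) \le A/48 + A/48 = A/24$ on an event of probability at least $1 - 2e^{-200A}$.

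The main obstacle is obtaining the required tail for the wandering mass. A naive application of Corollary~\ref{cor:mass_travel} with $L < 1/4$ and $t_0$ a fixed constant only yields a Gaussian tail $\exp(-L^2/(2t_0))$, which is a fixed constant in $A$ and hence beats $e^{-200A}$ only for $A$ bounded above. Since the lemma is applied in the regime $A > C$ with $C$ large, bridging this gap is where the real work lies; I would address it by a dedicated many-to-one plus exponential Chebyshev bound on the weighted mass of descendants whose ancestor leaves $(-L, L)$, exploiting that for a single tagged trajectory the exit probability is $\le 2e^{-L^2/(2t_0)}$ and combining independence at branch points with a conditional Chernoff estimate to upgrade the per-particle tail to a bound exponential in $A$ for the weighted sum.
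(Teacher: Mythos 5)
Your decomposition into ``close'' and ``wandering'' lineages, without first separating out the trajectories that branch, introduces two problems the paper's proof is specifically engineered to avoid.

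The first and most concrete issue is in the ODE step. From $F'(t)e^{F(t)} \le 2Ae^t$ with $F(0)=0$ one can only conclude $e^{F(t)} \le 1 + 2A(e^t-1)$, i.e.\ $e^{-F(t)} \ge 1/(1+2A(e^t-1))$. This is the \emph{wrong direction}: to upper-bound $a^*(t) \le 2Ae^t e^{-F(t)}$ you would need a lower bound on $F(t)$, equivalently an upper bound on $e^{-F(t)}$, and the integrated inequality gives the opposite. The step ``so $a^*(t) \le 2Ae^t/(1+2A(e^t-1))$'' silently replaces the differential \emph{inequality} by the corresponding equality, which is not valid here because $a^*$ is not monotone: $I_{\mathrm{close}}(s)$ grows at every branching event of a close particle, so $a^*$ can jump upwards and in particular a small value of $a^*(s)$ at some $s<t$ controls nothing at time $t$. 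This is exactly why the paper first strips out all branching trajectories ($I_b$, handled via Lemma~\ref{lem:decrease_2}) before running the density self-correction argument on $I_n$: once trajectories are non-branching, $\sum_{i\in I_n(s)}M_i(s)$ is genuinely decreasing in $s$, and the dichotomy ``either the local density stays above $cA$ and everything decays exponentially, or it drops below $cA$ at some $s$ and stays there'' becomes a deterministic statement that requires no ODE comparison at all.

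The second issue is the one you flag yourself: the wandering-mass tail. Corollary~\ref{cor:mass_travel} alone cannot give a bound decaying exponentially in $A$, and you leave the fix as a sketch. The missing ingredient is again the non-branching restriction. Once $I_b$ is removed, each surviving lineage has a unique time-$0$ ancestor $j$, a single Brownian trajectory, and independent excursion behaviour; ``wanders beyond distance $1/4$'' is then a Bernoulli$(q)$ event with $q=8\exp(-1/(32t))$ independently across $j\in J$, so $\sum_{i\in I_f}M_i(t)$ is stochastically dominated by $\sum_{j\in J}m_j\xi_j$ and Theorem~\ref{thm:bern} gives a tail like $(eq/b)^{bA}$, exponentially small in $A$. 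Without stripping the branching trajectories you cannot get this independence, and your proposed ``conditional Chernoff over the branching structure'' would have to reprove something essentially equivalent to the paper's $I_b$--$I_f$ split. So the gap is not cosmetic: the branching/non-branching decomposition is the key idea, and both halves of your argument --- the ODE bound and the wandering tail --- fail without it.
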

\begin{proof}
The proof is divided as follows. First, the total mass at time $t$ of particles whose trajectory branches at least once is small. Next, among non-branching trajectories, the total mass which moves far from the origin is small. Finally, particles whose trajectories do not branch and stay near the origin will lose a large amount of mass since they are a dense environment. We now formalize this. 

Write $I_b = \{i \in I: \exists i' \ne i, j_{i,t}(0)=j_{i',t}(0)\}$ for the indices of particles starting near (distance $< 1/4$) to the origin whose trajectories branch before time $t$. 
Then let $I\setminus I_b= I_f \cup I_n$, where 
\[
I_f = \left\{i\in I\setminus I_b: |X_{i,t}(0)| < 1/4, \sup_{s \in [0,t]} |X_{i,t}(s)| > 1/2\right\}
\]
indexes non-branching trajectories that start near the origin but move far (distance $>1/2$) from the origin before time $t$, and 
where $I_n=I\setminus (I_f\cup I_b)$ indexes non-branching trajectories that stay near the origin. Then with $M_b= \sum_{i \in I_b} M_i(t)$ and $M_f$, $M_n$ defined accordingly, we have 
\[
\sum_{i \in I} M_i(t)=M_b + M_f+M_n. 
\]

We begin by considering branching trajectories. For each $1 \le j \le k$, let $G_j = \# \{i \in I: j_{i,t}(0)=j\}$. 
Then $i \in I_b$ precisely if $j_{i,t}(0) \in J$ and $G_{j_{i,t}(0)}> 1$. Since masses decrease with time, 
\begin{align*}
\sum_{i \in I_b} M_i(t)  	& \le \sum_{i \in I_b} m_{j_{i,t}(0)} \\	
					& = \sum_{j \in J} m_j G_j \I{G_j > 1}.
\end{align*}
Next, since the $G_j$ are integer-valued, 
\[
\sum_{j \in J} m_j G_j \I{G_j > 1} = \sum_{j \in J} m_j (G_j-1) + \sum_{j \in J} m_j \I{G_j > 1} < 2\sum_{j \in J} m_j (G_j-1), 
\]
which with the preceding bound gives 
\begin{align*}
\sum_{i \in I_b} M_i(t)	& \le 2 (\sum_{j \in J} m_j G_j - A). 
\end{align*}
By Lemma~\ref{lem:decrease_2}, it follows that for any fixed $\delta > 0$, if $t<\log 2$,
\begin{align}\label{eq:ib_bound}
\p{\sum_{i \in I_b} M_i(t) > 2\delta A} & \le 
\p{\sum_{j \in J} m_jG_j \ge (1+\delta) A} \nonumber\\ 
& \le 2(2^{1+\delta} (1-e^{-t})^{\delta})^A \nonumber\\
& < 2(2^{1+\delta} t^{\delta})^A \nonumber\\
& \le e^{-200A}\, ,
\end{align}
the last bound holding for $t$ sufficiently small that $2^{2+\delta} t^{\delta} < e^{-200}$. 
We next bound $\sum_{i \in I_n} M_i(t)$, the total final mass from ``typical'' trajectories, which do not branch and do not move far from their starting position by time $t$. 
Fix $c  \in (0,1)$ and let $E$ be the event that for all $s \in [0,t]$, $\sum_{\{i: |X_i(s)|<1/2\}} M_i(s) > cA$. On $E$, if $i \in I_n$ has $j_{i,t}(0)=j$ then $M_i(t) \le m_j \cdot e^{-tcA}$. 
We thus have 
\[
\sum_{i \in I_n} M_i(t) \I{E} \le \sum_{j \in J} m_j \cdot e^{-tcA}\cdot \I{E} = A e^{-tcA}\I{E}\, .
\]
Next, let $I_n(s)=\{j_{i,t}(s): i \in I_n\}$ be the indices of time-$s$ ancestors of individuals in $I_n$. 
Since trajectories indexed by $I_n$ do not branch, $\sum_{i \in I_n(s)} M_i(s)$ is decreasing for $s \in [0,t]$. 
Necessarily $|X_i(s)| < 1/2$ for $i \in I_n(s)$, so if $E^c$ occurs then there is $s \in [0,t]$ such that $\sum_{i \in I_n(s)} M_i(s) \le cA$. 
We thus have 
\[
\sum_{i \in I_n} M_i(t) \I{E^c} \le cA\I{E^c}\, ,
\]
and the two preceding bounds together give 
 \begin{align} \label{eq:in_bound}
\sum_{i \in I_n} M_i(t) & \le \max\pran{cA, A e^{-tcA}}\, .
 \end{align}

Finally, we turn to the final mass of non-branching trajectories that move far from the origin, counted by $\sum_{i \in I_f} M_i(t)$. 
For any $i \in I$, If $j_{i,t}(0)=j$ and $|x_j|< 1/4$ then in order to have $\sup_{s \in [0,t]} |X_{i,t}(s)| > 1/2$ the trajectory leading to $X_i(t)$ wanders a distance of at least $1/4$ from its starting position. 
Let $W$ denote one-dimensional Brownian motion started from the origin. By the reflection principle and the fact that $\p{G > x} \le e^{-x^2/2}/(2x)$ for $G$ a standard normal and for all $x > 0$, we have 
\[
\p{\sup_{s \le t} |W_s| > 1/4} \le 4\p{W_t > \frac{1}{4}} \le 8\exp(-1/(32t))\, . 
\]
Since an individual trajectory of $X$ has the law of Brownian motion, for a particle starting at distance less than $1/4$ from the origin whose trajectory never branched, the above is a bound on the probability the trajectory attained distance $1/2$ from the origin. It follows that 
\[ 
\sum_{i \in I_f} M_i(t) \pst \sum_{j \in J} m_j \cdot \xi_j\, ,
\]
where the terms $\xi_j$ are i.i.d.\  Ber$(8\exp(-1/(32t)))$. 
The variance of the latter sum is bounded by $A\cdot 8\exp(-1/(32t))$, so 
Theorem~\ref{thm:bern} yields that for any fixed $b > 0$, 
\begin{equation}\label{eq:if_bound}
\p{\sum_{i \in I_f} M_i(t) > (b+8\exp(-1/(32t)))A}  \le 
\pran{\frac{8e^{1-1/(32t)}}{b}}^{bA} 
< e^{-200A}\, ,
\end{equation}
the final inequality for $t$ sufficiently small. 

We now combine (\ref{eq:ib_bound}), (\ref{eq:in_bound}) and (\ref{eq:if_bound}). This yields that for $t$ sufficiently small, 
and in particular provided that $2^{2+\delta}t^{\delta}  < e^{-200}$, $((8e^{1-1/(32t)})/b)^{b}<e^{-200}$  and that 
\[
2\delta + \max(c,e^{-tcA})+b+8 \exp(-1/(32t)) < \frac{1}{24}\,
\]
we have 
\[
\p{\sum_{i \in I} M_i \ge A/24} \le 2e^{-200A}\, .
\]
It can be checked that taking $\delta=b=c=1/100$ does the job when $t > 100 \log 100/A$ (so that $\max(c,e^{-tcA})=1/100$)
and $t$ is sufficiently small (it is in order to satisfy these simultaneously that we require a lower bound on $A$). 
This completes the proof. 
\end{proof}

\begin{proof}[Proof of Fact~\ref{fact:massdoub}]
Let $\Z/2 = \{y/2: y \in \Z\}$. 
Define the event 
\[
E=\{\max\{ |X_i(r)|, i \ge 1, 0 \le r \le s+10^5/N\} \le 3s\}\, .
\] 
Any unit interval $[x-1/2,x+1/2]$ is covered by at most two intervals from $\{[y-1/2,y+1/2]: y \in \Z/2\}$. 
It follows that on $E$, if $\tau_k< s$ but $\sup\{z(t), t \in [\tau_k,\tau_k+10^5/N]\} > 10N$ then there is $y$ with $y \in [-3s,3s]\cap \Z/2$ such that 
\[
\sup_{t \in [\tau_k,\tau_k+10^5/N]}\sum_{\{i:|X_i(t)-y| <1/2\}}M_i(t) > 5N. 
\]
When $k < I$ we have $\tau_k < s$, so 
\begin{align}	\label{eq:y_bound}
	& \p{\sup_{t \in [\tau_k,\tau_k+10^5/N]} z(t) > 10N,z(\tau_k)\le N,k < I} \nonumber\\
\le 	& \p{E^c} + \sum_{y \in [-3s,3s]\cap\Z/2} \p{\sup_{t \in [\tau_k,\tau_k+10^5/N]}z(t,y) > 5N,z(\tau_k) \le N}.
\end{align}
Our bound on the above summands works identically for each $y \in [-3s,3s] \cap \Z/2$; we explain it for $y=0$ to avoid notational overload. 
So we wish to bound 
\[
\p{\sup_{t \in [\tau_k,\tau_k+10^5/N]}z(t,0) > 5N,z(\tau_k) \le N}.
\]

Our strategy is as follows:  we use Corollary~\ref{cor:decrease_2} to show that with high probability, for all $t \in [\tau_k,\tau_k+10^5/N]$ the total contribution to $z(t,0)$ from descendants of particles with $|X_i(\tau_k)| \le 3/2$ is at most $4N$. We then use Proposition~\ref{prop:mass_travel} to show that with high probability the contribution to $z(t,0)$ from descendants of further-off particles decreases quadratically (as a function of $|X_i(\tau_k)|$); since the quadratic series converges, this implies a bound on the total contribution from far-off particles. We now proceed to details.

For $n \in \Z$ let 
\[
Y_n = \sup_{t \in [\tau_k,\tau_k+10^5/N]} \sum_{\{i: |X_i(t)| \le 1/2, |X_{i,t}(\tau_k)-n| \le 1/2\}} M_i(t); 
\]
$Y_{n}$ counts the greatest contribution at any time $t \in [\tau_k,\tau_k+10^5/N]$, to the mass near $0$ from particles that at time $\tau_k$ are near $n$. We clearly have 
\begin{equation}\label{eq:z_local_contrib}
\sup_{t \in [\tau_k,\tau_k+10^5/N]} z(t,0) \le \sum_{n \in \Z} Y_n\, .
\end{equation}

As sketched above, we bound the sum in two parts: the contribution from $Y_{-1},Y_0$ and $Y_1$ is handled separately from the rest, and we do this first. 
Note that since masses decrease with time,
\[
Y_{-1}+Y_0+Y_1 \le \sup_{t \in [\tau_k,\tau_k+10^5/N]}\sum_{\{i:|X_{i,t}(\tau_k)| \le 3/2\}} M_{i,t}(\tau_k). 
\]
If $z(\tau_k) \le N$ then $\sum_{\{i: |X_i(\tau_k)| \le 3/2\}} M_i(\tau_k) \le 3N$ so, by 
Corollary~\ref{cor:decrease_2} and the strong Markov property, 
\begin{align*}
\p{Y_{-1}+Y_0+Y_1> 4N, z(\tau_k) \le N} & \le 2\pran{2^{1+1/3}(1-e^{-10^5/N})^{1/3}}^{3N} \\
					& \le (20^5/N)^{N}\, .
\end{align*}

Now consider $n \in \Z$ with $|n| \ge 2$, and assume by symmetry that $n > 0$. 
If $|X_i(t)| \le 1/2$ but  $|X_{i,t}(\tau_k)+n| \le 1/2$ then $X_i(t)-X_{i,t}(\tau_k) \ge n-1$. Assuming 
$z(\tau_k) \le N$, in particular we have $z(\tau_k,-n)\le N$. Furthermore, 
\[
Y_{-n} \le \sup_{t \in [\tau_k,\tau_k+10^5/N]} \sum_{\{i: X_i(t)-X_{i,t}(\tau_k) > n-1\}} M_{i,t}(\tau_k)\, .
\]
When $n \ge 2$, applying Corollary~\ref{cor:mass_travel} with $t_0 = 10^5/N$, $L=n-1$, $v=1/(20(n-1)^2)$ and $x = N$, we then obtain that 
\begin{align} \label{Y_far_bound}
\p{Y_{-n} > \frac{N}{10(n-1)^2}, z(\tau_k) \le N} & \le 
40(n-1)^2 \exp\left(\frac{10^5}{N}-\frac{N(n-1)^2}{2\cdot 10^5}\right)\, \notag \\
&  < 
\exp\left(-\frac{N(n-1)^2}{3 \cdot 10^5}\right) \notag \\
&\le  
s^{-10(n-1)^2}\, .
\end{align}
The final inequality holds since $N=N(s)=10^7 \log s$; the second inequality holds provided $N$ is sufficiently large. We emphasize that once $N$ is large enough the inequality holds for {\em all} $n \ge 2$. Note that by symmmetry the same bound also holds for $Y_n$. 

Using (\ref{eq:z_local_contrib}) and the two preceding probability bounds (and the fact that 
$(1/10)\sum_{|n| \ge 2} (n-1)^{-2} = \pi^2/30 < 1$), we thus have 
\begin{align}
& \p{\sup_{t \in [\tau_k,\tau_k+10^5/N]}z(t,0) > 5N,z(\tau_k) \le N}\nonumber\\
 \le & \p{Y_{-1}+Y_0+Y_1> 4N, z(\tau_k) \le N} + 
 \sum_{\{n \in \Z: |n| \ge 2\}} \p{Y_n \ge \frac{N}{10(n-1)^2},z(\tau_k) \le N} \nonumber\\
< &  \pran{\frac{20^5}{N}}^{N} + \sum_{|n| \ge 2} s^{-10(n-1)^2} \nonumber \\ 
< & \pran{\frac{20^5}{N}}^{N} + 4s^{-10}, \label{eq:for_nextfact}
\end{align}
where the last inequality holds for $s$ sufficiently large.
The same argument yields the same bound with $z(t,y)$ in place of $z(t,0)$, and (\ref{eq:y_bound}) then gives
\begin{align*}
	& \p{\sup_{t \in [\tau_k,\tau_k+10^5/N]} z(t) > 10N,z(\tau_k)\le N,k < I} \\
\le 	& \p{E^c} + 12s\cdot \pran{\pran{\frac{20^5}{N}}^{N} + 4s^{-10}}\\
\le 	& \p{E^c} + s^{-8}\, ,
\end{align*}
the latter bound holding for $s$ large, since $N=10^7 \log s$. To conclude, we use the fact that 
\[
\Cprob{ \max\{|X_i(s+10^5/N)|, i \ge 1\} \ge 3s}{E^c} \ge \frac{1}{2}, 
\]
which follows by considering the stopping time $\tau = \inf\{r: \max\{|X_i(r)|, i \ge 1\} \ge 3s\}$ and using symmetry. 
This yields  
\begin{align}
\p{E^c} & \le 2 \p{\max\{|X_i(s+10^5/N)|, i \ge 1\} \ge 3s} \nonumber\\
& \le 4\E{\#\{i: X_i(s+10^5/N) \ge 3s\}} \nonumber\\
& = 4e^s \p{N(0,s+10^5/N) \ge 3s} \nonumber\\
& \le e^{-3s} \label{eq:ebound}\\
& < s^{-8}, \nonumber 
\end{align}
where the last two inequalities hold for $s$ sufficiently large.
\end{proof}
\begin{proof}[Proof of Fact~\ref{fact:masslow}]
The proof has aspects which will be familiar from the previous proof; we describe these first. We recycle the event $E$ from the preceding proof. Note that on $E\cap \{k < I\}$ we have 
\[
z(\tau_k+10^5/N) \le 2 \sup_{y \in [-3s,3s] \cap \Z/2} z(\tau_k+10^5/N,y)\, ,
\]
so 
\begin{align}
& \p{z(\tau_k+10^5/N) \ge N-1, z(\tau_k) \le N, k < I, E}\nonumber\\
\le & 
\sum_{y \in [-3s,3s] \cap \Z/2} \p{ z(\tau_k+10^5/N,y) > \frac{N-1}{2}, z(\tau_k) \le N}. \label{eq:zbounds}
\end{align}
We once again focus on the case $y=0$ for notational simplicity. We write 
\[
Z_n = \sum_{\{i: |X_i(\tau_k+10^5/N)|<1/2, |X_{i,\tau_k+10^5/N}(\tau_k)-n|< 1/2\}} M_i(\tau_k+10^5/N)\, .
\]
The indices of summation correspond to particles with position near $0$ at time $\tau_k+10^5/N$, whose time $\tau_k$ ancestor had position near $n$. We have 
\[
z(\tau_k+10^5/N,0) \le \sum_{n \in \Z } Z_n. 
\]
Now similarly to the argument leading to (\ref{Y_far_bound}), apply Corollary~\ref{cor:mass_travel} with $t_0 = 10^5/N$, $L=n-1$, $v=1/(40(n-1)^2)$ and $x = N$ to bound $Z_n$ for $|n|\ge 2$. 
%
We obtain that for $s$ sufficiently large (since $(1/20)\sum_{|n| \ge 2} (n-1)^{-2} = \pi^2/60 < 1/2$) 
\begin{align}
& \p{z(\tau_k+10^5/N,0) \ge \frac{N-1}{2}, z(\tau_k) \le N} \nonumber\\
\le & \p{Z_{-1}+Z_0+Z_1 \ge \frac{N}{4}, z(\tau_k) \le N} + 
 \sum_{\{n \in \Z : |n| \ge 2\}}  \p{Z_n \ge \frac{N}{20(n-1)^2},z(\tau_k) \le N} \nonumber\\
\le  & \p{Z_{-1}+Z_0+Z_1 \ge \frac{N}{4}, z(\tau_k) \le N} + 4s^{-10}\, .\label{eq:firstbound}
\end{align}
We now bound $Z_{-1}+Z_0+Z_1$ from above by the {\em total mass at time $\tau_k+10^5/N$ of individuals whose time-$\tau_k$ ancestor lies in $[-3/2,3/2]$}. More precisely, recall that $X_{i,t}(s)$ is the (location of) the time-$s$ ancestor of $X_i(t)$, and write
\[
D_{\ell} = \sum_{\{i: X_{i,\tau_k+10^5/N}(\tau_k) \in [\ell/2,(\ell+1)/2]\}}M_i(\tau_k+10^5/N). 
\]
Then 
\[
Z_{-1}+Z_0+Z_1 \le  \sum_{\ell \in [-3,2]\cap \Z} D_\ell \, .
\]
This holds because the time-$\tau_k$ ancestors of particles counted by $Z_{-1}+Z_0+Z_1$ all lie in $[-3/2,3/2] = \bigcup_{\ell \in [-3,2] \cap \Z} [\ell/2,(\ell+1)/2]$. The bound may be strict because particles counted by $Z_{-1}+Z_0+Z_1$ are additionally required to lie near $0$ at time $\tau_k+10^5/N$. 

Bounding each of the summands $D_\ell$ by the largest summand, we then have 
\[
Z_{-1}+Z_0+Z_1 \le  6\max_{\ell \in [-3,2]\cap \Z} D_\ell\, , 
\]
so 
\begin{align*}
& \p{Z_{-1}+Z_0+Z_1 \ge \frac{N}{4}, z(\tau_k) \le N} \\
\le & 
 6\max_{\ell \in [-3,2]\cap \Z} \p{D_\ell > \frac{N}{24}, z(\tau_k)\le N}
\end{align*}
The final probabilities are not hard to bound: if $D_\ell$ hearkens from a total time-$\tau_k$ mass which is very small then at time $\tau_k+10^5/N$ it is still rather small by 
Corollary~\ref{cor:decrease_2}. On the other hand, if the aggregate mass of its time-$\tau_k$ ancestors was larger (but still at most $N$) then by Lemma~\ref{lem:decrease_1}, at time $\tau_k+10^5/N$ that ancestral population has lost most of its mass. 

More precisely, since $M_i(\tau_k+10^5/N)\le M_{i,\tau_k+10^5/N}(\tau_k)$ for each $i$, by Corollary~\ref{cor:decrease_2} and the strong Markov property, 
\[
\Cprob{D_\ell > \frac{N}{24}}{\sum_{\{j: X_j(\tau_k) \in [\ell/2,(\ell+1)/2]\}}M_j(\tau_k) \le N/48} \le 2(4(1-e^{-10^5/N}))^{N/48} \le 2 \pran{\frac{20^5}{N}}^{N/48}. 
\]
Now assume that $N=N(s)=10^7\log s > 48 C$, where $C$ is the constant from Lemma~\ref{lem:decrease_1}. By that lemma, since $10^5/N>500/(N/48)$,
\[
\Cprob{D_\ell > \frac{N}{24}}{\sum_{\{j: X_j(\tau_k) \in [\ell/2,(\ell+1)/2]\}}M_j(\tau_k) \in [N/48,N]} \le 2e^{-200N/48} < e^{-4N}, 
\]
the latter inequality for $N=N(s)$ sufficiently large. 
This bound holds for each $\ell \in [-3,2]\cap \Z$. Under the assumption that $z(\tau_k)\le N$, one of the conditions in the above conditional probabilities must occur. It follows that 
\begin{align*}
 6\max_{\ell \in [-3,2]\cap \Z} \p{D_\ell > \frac{N}{24}, z(\tau_k)\le N} 
 \le 6\max\pran{2\pran{\frac{20^5}{N}}^{N/48},e^{-4N}}\, ,
\end{align*}
so for $N$ sufficiently large 
\[
\p{Z_{-1}+Z_0+Z_1 \ge \frac{N}{4}, z(\tau_k) \le N} \le 6e^{-4N}= 6s^{-4\cdot 10^7}\, .
\]
Combined with (\ref{eq:firstbound}) this gives 
\[
\p{z(\tau_k+10^5/N,0) \ge \frac{N-1}{2}, z(\tau_k) \le N} < 5s^{-10}. 
\]
The same bound holds for each $z(\tau_k+10^5/N,y)$, so 
using (\ref{eq:zbounds}) and the bound $\p{E^c} \le e^{-3s}$ from (\ref{eq:ebound}), for $s$ large we obtain 
\[
\p{z(\tau_k+10^5/N) \ge N-1, z(\tau_k) \le N, k < I} \le 60s^{-9}+e^{-3s} < s^{-8}\, .
\]

The proof is almost complete; to finish it off we need to deal with the event $\{z(\tau_k) \le N\}$ in the preceding probability. 
To do so we use induction. 
First, for $s$ large, since $N=N(s)=10^7\log s$ and $\tau_0=\inf\{t:z(t) \ge N-1\}$, then 
$z(\tau_{0}^-)\le N-1$. It follows that
$z(\tau_{0}) \le z(\tau_{0}^-)+1 \le N$ (this was explained in the proof of Proposition~\ref{prop:massbound}), so when $k=0$ we have 
\[
\p{z(\tau_k+10^5/N) \ge N-1, k < I} = \p{z(\tau_k+10^5/N) \ge N-1, z(\tau_k) \le N, k < I} \le s^{-8}\, .
\]
For larger $k$, 
similarly
if $z(\tau_{k-1}+10^5/N) \le N-1$ then $z(\tau_{k}) \le z(\tau_{k}^-)+1 \le N$. We thus have 
\begin{align*}
\p{z(\tau_k+10^5/N) \ge N-1, k < I} & \le 
\p{z(\tau_k+10^5/N) \ge N-1, z(\tau_k) \le N, k < I} \\
& \quad + \p{z(\tau_k) > N,k < I} \\
& \le s^{-8} + \p{z(\tau_{k-1}+10^5/N) \ge N-1,k-1 < I}, 
\end{align*}
so by induction and the hypothesis that $k \le Ns/10^5$, 
\[
\p{z(\tau_k+10^5/N) \ge N-1, k < I} \le (k+1) \cdot s^{-8} 
< N s^{-7} < s^{-6}. \qedhere
\]
\end{proof}

\section{Lower bound}\label{sec:lower} 
The next proposition restates the second inequality of Theorem~\ref{thm:main}. Recall that 
$c^*=3^{4/3}\pi^{2/3}/2^{7/6}$. 
\begin{prop}\label{prop:lb}
For any $m  \in (0,1)$, almost surely 
\[
\liminf_{t \to \infty} \frac{\sqrt{2} t-D(t,m)}{t^{1/3}} \le c^*. 
\]
\end{prop}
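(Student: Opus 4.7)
The plan is to fix $c>c^*$ and $m\in(0,1)$, and show that almost surely there exist arbitrarily large times $s'$ with $D(s',m)\ge \sqrt 2 s'-c(s')^{1/3}-O(\log s')$; intersecting the resulting events over a countable dense family of parameters and taking $c\downarrow c^*$ then yields the proposition. Following the ``Population $+$ no competitors $=$ mass'' and ``Surfing the wave'' heuristics of the sketch, I would argue by contradiction against a ``no density ahead of the barrier'' hypothesis, using Proposition~\ref{prop:massbound} to force the violation to occur at a macroscopic time, and then invoke density self-correction to amplify a tiny seed density to one exceeding $m$.

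Set $f(s)=\sqrt 2 s-cs^{1/3}$, and for each large parameter $t$ define
\[
\beta_t(s)=\begin{cases} A\log(s+2) & 0\le s\le t^{1/4},\\ t^{-1} & t^{1/4}<s\le t,\end{cases}
\]
where $A$ is chosen (via Proposition~\ref{prop:massbound} and Borel--Cantelli) so that almost surely $\sup_x\zeta(s,x)\le A\log(s+2)$ for all sufficiently large $s$; note $\int_0^t\beta_t(s)\,ds=O(t^{1/4}\log t)$. Suppose toward contradiction that $D(s,\beta_t(s))\le f(s)$ for every $s\in[0,t]$. Since $c>c^*$, consistent-maximal-displacement estimates for BBM from \citep{matt_cmd} yield, with probability at least $1-e^{-\delta t^{1/3}}$, a set of at least $N_t:=e^{\kappa t^{1/3}}$ particles $i\in N(t)$ with $X_{i,t}(s)\ge f(s)$ for all $s\in[0,t]$, for some $\kappa,\delta>0$ depending only on $c$. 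Under the hypothesis each such particle sees total local mass at most $\beta_t(s)$ at time $s$, so $M_i(t)\ge\exp(-\int_0^t\beta_t(s)\,ds)=e^{-O(t^{1/4}\log t)}$. Standard maximal-displacement bounds place all time-$t$ particles in $[-2\sqrt 2 t,2\sqrt 2 t]$ with overwhelming probability, so pigeonhole supplies a unit window centred at some $x>f(t)+1$ containing at least $N_t/(8\sqrt 2 t)$ of these particles, giving
\[
\zeta(t,x)\ge \frac{N_t\,e^{-O(t^{1/4}\log t)}}{8\sqrt 2 t}\gg t^{-1}=\beta_t(t),
\]
contradicting $D(t,\beta_t(t))\le f(t)$. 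Hence with high probability there exist $s\in[0,t]$ and $x>f(s)$ with $\zeta(s,x)>\beta_t(s)$. On the a.s.\ event furnished by Proposition~\ref{prop:massbound} this is impossible for $s\le t^{1/4}$, so $s\in(t^{1/4},t]$ and $\zeta(s,x)>t^{-1}$.

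It remains to upgrade such a seed density of $t^{-1}$ to one exceeding $m$ at a nearby time and location. The density self-correction heuristic $\partial_t\zeta\approx\zeta(1-\zeta)$ predicts exponential growth of $\zeta$ in regions where total mass is small, so starting from $\zeta(s,x)\ge t^{-1}$ we expect $\zeta(s',x')\ge m$ at some $(s',x')$ with $s'-s=O(\log t)$ and $|x'-x|=O(1)$. Since $s\ge t^{1/4}$ gives $\log t=O(\log s')$, this yields
\[
D(s',m)\ge x'\ge f(s)-O(1)\ge \sqrt 2 s'-c(s')^{1/3}-O(\log s').
\]
Applying this at times $t_k=2^k$ and summability of the failure probabilities, Borel--Cantelli produces almost surely arbitrarily large witnesses $s'_k$ with $(\sqrt 2 s'_k-D(s'_k,m))/(s'_k)^{1/3}\le c+o(1)$. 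Taking countable intersections and then $c\downarrow c^*$ completes the proof. The main obstacle is making the density self-correction step quantitative and robust: the ODE heuristic must be controlled against the noise of branching events, particles drifting in and out of the unit window, and the risk that the seed particles responsible for the initial $t^{-1}$ mass are themselves annihilated (by competition with denser regions behind them) before they can amplify. I expect the bulk of Section~\ref{sec:lower} to be devoted to establishing a quantitative amplification statement for $\zeta$ uniformly on a suitable macroscopic event.
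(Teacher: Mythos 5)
Your plan follows the ``Surfing the wave'' heuristic from the proof sketch rather than the paper's actual argument in Section~\ref{sec:lower}, and the two routes are genuinely different. The paper does \emph{not} work at $c>c^*$ with an exponentially large crowd of surviving particles and a pigeonhole over unit windows. Instead it works directly with $g$ at the critical level $c^*$ (including the $c^*t^{1/3}/\log^2(t+e)$ correction from Roberts' Lemma~\ref{lem:matt}), and uses the amplification in Corollary~\ref{C_star_cor} to get a random a.s.\ finite shift $C^*$ such that one single ancestral trajectory stays above $g-C^*$ for all time. That lone particle, combined with the stopping time $\sigma=\inf\{s\ge t^{1/4}: D(s,1/t)\ge g(s)-C^*-1\}$, already furnishes a seed mass of at least $e^{-1-Zt^{1/4}\log t}$ (when $\sigma>t$) or a seed density of $1/t$ at time $\sigma\in[t^{1/4},t]$ (when $\sigma\le t$), and in either case Lemma~\ref{lem:mass_to_one} does the amplification. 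Because only one survivor is needed, no pigeonhole and no concentration estimate for survivor counts are required; because $g$ is already at $c^*$, no limit $c\downarrow c^*$ is needed at the end.

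Your version has a concrete gap at the key quantitative step. You invoke, with no proof, that with probability at least $1-e^{-\delta t^{1/3}}$ there are at least $e^{\kappa t^{1/3}}$ particles whose ancestry stays above $f(s)=\sqrt 2 s-cs^{1/3}$ on $[0,t]$. The cited Roberts result (Lemma~\ref{lem:matt} here) only gives that at the critical level $g$ the \emph{existence} of one surviving particle has limiting probability $p^*>0$; neither this nor the standard first/second moment estimates deliver the lower deviation bound $\p{\#\{\text{survivors}\}<e^{\kappa t^{1/3}}}\le e^{-\delta t^{1/3}}$, and the amplification argument in the paper (Corollary~\ref{C_star_cor}) only upgrades ``positive probability'' to ``a.s.\ one survivor above $g-C^*$ for a random finite $C^*$,'' not to an exponential bound on the probability of having few survivors above a \emph{fixed} barrier. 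Establishing your claim would require additional work the paper avoids. A second, more minor issue: your Borel--Cantelli step along $t_k=2^k$ also needs summable failure probabilities at the amplification stage, but Lemma~\ref{lem:mass_to_one} gives success probability $1-\eps$ for any fixed $\eps>0$ (with constants depending on $\eps$), not a summable error. The paper handles this correctly by observing that the target event is monotone in $t$ and letting $t\to\infty$ and then $\eps\to 0$, rather than by Borel--Cantelli. Apart from these two points the use of Proposition~\ref{prop:massbound} for the initial phase and of the density self-correction lemma for the final upgrade matches the paper well.
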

Given a function $f:[0,\infty) \to \R$, for $t \ge 0$ let $I(t,f) = \{ i \ge 1: \forall s \in [0,t], X_{i,t}(s) \ge f(s)\}$ be the indices of particles 
whose ancestral trajectory stays above $f$ up to time $t$. Note that $|I(t,f)|$ is decreasing in $t$: if a trajectory stays above $f$ to time $t$ then it also stays above $f$ to time $t'<t$. It follows that 
$\p{\forall t, I(t,f)\ne\emptyset} = \lim_{t \to \infty} \p{I(t,f) \ne \emptyset}$, and this is a decreasing limit. 
We will use the following result of Roberts \cite{matt_cmd}.
\begin{lem}[\cite{matt_cmd}, Theorem 1] \label{lem:matt}
Let $g(t) = \sqrt{2} t - c^* t^{1/3} + c^* t^{1/3}/\log^2(t+e)-1$. Then 
\[
\lim_{t \to \infty} \p{ I(t,g) \ne \emptyset} = p^* > 0
\]
\end{lem}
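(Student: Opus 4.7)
The plan is to establish $p^* > 0$ via a first-and-second-moment argument of Paley--Zygmund type. Observe first that $I(t',g) \subseteq I(t,g)$ whenever $t' \ge t$, so the events $\{I(t,g) \ne \emptyset\}$ are decreasing in $t$ and the limit $p^*$ exists; the remaining task is to prove strict positivity. It therefore suffices to show that $\p{I(t,g) \ne \emptyset}$ is bounded below by a positive constant uniformly in $t$.

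The first moment is handled by the many-to-one lemma:
\[
\E{|I(t,g)|} = e^t \p{B(s) \ge g(s) \; \forall s \in [0,t]},
\]
for $B$ a standard Brownian motion started at $0$. Setting $Y(s) = B(s) - \sqrt{2}\, s$ reformulates the barrier event as $\{Y(s) \ge h(s) \; \forall s \le t\}$ with $h(s) = -c^* s^{1/3}(1 - 1/\log^2(s+e)) - 1$, and a Girsanov change of measure removing the $-\sqrt 2$ drift on $Y$ yields
\[
\E{|I(t,g)|} = \E{e^{-\sqrt{2}\, Y(t)} ; \; Y(s) \ge h(s) \; \forall s \in [0,t]},
\]
with $Y$ now a standard BM. The constant $c^* = 3^{4/3}\pi^{2/3}/2^{7/6}$ is calibrated so that, for the pure power barrier $\phi(s) = -c^* s^{1/3}$, the weighted event ``$Y$ stays above $\phi$ and $Y(t) \approx \phi(t)$'' contributes $\asymp t^{-\alpha}$ to this expectation for some $\alpha > 0$; this is the Airy/Groeneboom-type asymptotic for Brownian motion above a concave cube-root barrier, and is precisely the subject of the tube estimates underlying Lemma~\ref{tailor_made_roberts}. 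The extra positive term $c^* s^{1/3}/\log^2(s+e)$ in $g$ widens the admissible strip just enough that the resulting $\log^{O(1)}$ cushion absorbs the polynomial loss, producing $\E{|I(t,g)|} \ge c_1 > 0$ uniformly in $t$.

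For the second moment I would use the many-to-two formula, partitioning pairs $(i,j) \in I(t,g)^2$ by the time $u$ of their last common ancestor:
\[
\E{|I(t,g)|^2} = \E{|I(t,g)|} + 2 \int_0^t e^{2t-u} \, q(u,t) \, du,
\]
where $q(u,t)$ is the probability that a single BM $B$ lies above $g$ on $[0,u]$ and two independent extensions starting at $B(u)$ both stay above $g$ on $[u,t]$. Conditioning on $B|_{[0,u]}$ and applying the first-moment analysis separately to each extension (after a Girsanov shift started at time $u$) shows $\E{|I(t,g)|^2} \le C\, \E{|I(t,g)|}^2$ uniformly in $t$; Paley--Zygmund then gives $\p{I(t,g) \ne \emptyset} \ge \E{|I(t,g)|}^2 / \E{|I(t,g)|^2} \ge c > 0$ for all $t$, and combining with the monotone-limit observation yields $p^* \ge c > 0$.

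The principal obstacle is the sharp first-moment estimate: one must obtain precise two-sided control on the probability that a Brownian motion stays above a concave $s^{1/3}$-barrier, identify the exact exponential rate associated with $c^*$ together with the correct polynomial prefactor, and then verify that the logarithmic cushion built into $g$ promotes that polynomial factor to a $\Theta(1)$ lower bound. This sharp barrier analysis is the technical heart of \cite{matt_cmd}, and is precisely what dictates the form of $g$ in the statement of the lemma.
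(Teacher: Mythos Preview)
The paper does not prove this lemma at all: it is quoted verbatim as Theorem~1 of Roberts~\cite{matt_cmd}, and the text immediately proceeds to use it (in Corollary~\ref{C_star_cor}) without further justification. So there is no ``paper's own proof'' to compare your attempt against.

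That said, your outline is essentially the right strategy and is in the spirit of how Roberts actually establishes the result in~\cite{matt_cmd}: a many-to-one/many-to-two computation combined with sharp tube estimates for Brownian motion confined near a $t^{1/3}$ curve, followed by Paley--Zygmund. Your identification of the ``principal obstacle'' is accurate: the hard work is the two-sided asymptotic for $\mathbf{E}[e^{-\sqrt{2}Y(t)};\,Y(s)\ge h(s)\ \forall s\le t]$ with the precise constant $c^*$, and verifying that the $c^* s^{1/3}/\log^2(s+e)$ cushion is exactly what promotes a polynomially decaying quantity to $\Theta(1)$. What you have written is a plausible plan rather than a proof; the second-moment bound in particular requires care, since one must control the contribution from pairs that split early and then both stay in the tube, and this again hinges on the sharp tube estimates. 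None of that analysis appears in the present paper, which simply imports the conclusion.
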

The idea of the proof of Proposition~\ref{prop:lb} is that if the density is always low beyond $g$ then a particle staying beyond $g$ will have reasonably large mass at time $t$; the lemma guarantees that such a particle has a reasonable chance $p^*$ of existing. 
The next corollary implies that at the cost of a constant shift of the function $g$, we may increase $p^*$ as close to one as we like. For $c \in \R$ write $g-c$ for the function with $(g-c)(x) = g(x)-c$. 
\begin{cor} \label{C_star_cor}
Let $C^* = \inf\{c: \forall t, I(t,g-c) \ne \emptyset\}$. Then almost surely $C^* < \infty$. 
\end{cor}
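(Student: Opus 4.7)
Write $p := \p{C^* < \infty}$; I would show directly that $p = 1$. \refL{lem:matt} gives the seed $p \ge p^* > 0$, since $\{\forall t,\, I(t,g) \ne \emptyset\} \subseteq \{C^* \le 0\}$. The strategy is a branching-property bootstrap: at a fixed large time $T$, each particle $v \in N(T)$ roots an independent BBM subtree, and each such subtree has its own independent chance to realise $C^* < \infty$.

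Fix $T > 0$, and for each $v \in N(T)$ let $E_v$ be the event that the subtree $\cT_v$ rooted at $v$ contains an infinite trajectory lying above $s \mapsto g(s) - c$ for all $s \ge T$, for some (random) $c \in \R$. I would argue three things. First, $E_v \subseteq \{C^* < \infty\}$: the good infinite ray in $\cT_v$, concatenated with the ancestral path of $v$ on $[0,T]$ (which is a.s.\ continuous, hence bounded), yields an infinite ray of the original BBM lying above $g - c'$ for some possibly larger $c'$. Second, $\Cprob{E_v}{\cF_T} = p$: translating the subtree to start at the origin at time zero, $E_v$ asks for a trajectory in a fresh BBM that stays above $s \mapsto g(s+T) - X_v(T) - c$ for some $c \in \R$; the starting position $X_v(T)$ is absorbed into the free constant, and the deviation $s \mapsto g(s+T) - g(s)$ is continuous on $[0,\infty)$ with finite limit $\sqrt 2 T$ as $s \to \infty$, hence bounded, so (after re-adjusting the free constant) the event coincides with $\{C^* < \infty\}$ for the fresh BBM. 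Third, the events $(E_v)_{v \in N(T)}$ are conditionally independent given $\cF_T$, by the branching property.

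Combining these three facts,
\[
p \;\ge\; \p{\bigcup_{v \in N(T)} E_v} \;=\; \E{1 - (1-p)^{n(T)}}.
\]
Since $n(T) \to \infty$ almost surely, if $p \in (0,1)$ then $(1-p)^{n(T)} \to 0$ a.s.\ as $T \to \infty$, and bounded convergence forces the right-hand side to $1$, a contradiction. Hence $p \in \{0,1\}$, and because $p \ge p^* > 0$ we conclude $p = 1$.

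The only step requiring real care is the second item above: the event of interest must be invariant both under spatial translations of the subtree's starting position (absorbed into the free $c$) and under the temporal shift $g \mapsto g(\cdot + T)$ (handled by the elementary observation that $g(\cdot+T) - g(\cdot)$ is bounded on $[0,\infty)$). No finer estimate involving the $\log^{-2}$ correction in the definition of $g$ is needed.
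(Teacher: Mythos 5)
Your proof is correct, and it takes a genuinely different route from the paper's. Both arguments invoke the branching property at a large fixed time $T$ and use \refL{lem:matt} as the seed, but the bookkeeping of the ``prefix cost'' on $[0,T]$ is handled very differently. The paper's proof keeps explicit track of it: using $-\inf_{s\le T}\inf_i X_i(s)$ and the event $\{n(T)\ge 2^T\}$, it produces a quantitative tail bound of the form $\p{C^*> (3+\sqrt 2)T+c^*T^{1/3}}\le (2/e)^T+e^{-7T/2}+(1-p^*)^{2^T}$. You instead notice that the event $\{C^*<\infty\}$ is invariant under bounded spatial shifts (absorbed by the free constant) and under the time shift $g\mapsto g(\cdot+T)$, because $g(\cdot+T)-g(\cdot)$ is bounded on $[0,\infty)$; so each subtree has conditional probability \emph{exactly} $p:=\p{C^*<\infty}$ of contributing a good ray, not merely the probability $p^*$ of the seed event. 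This converts the amplification into the self-referential bootstrap $p\ge \E{1-(1-p)^{n(T)}}$, which together with $n(T)\to\infty$ a.s.\ forces $p\in\{0,1\}$, and $p\ge p^*>0$ then gives $p=1$. Your route is cleaner -- it avoids constant-chasing and the auxiliary moment bound on $\inf_{s\le T}\inf_i X_i(s)$ -- at the cost of losing any explicit tail estimate on $C^*$; since the paper only uses $C^*<\infty$ a.s.\ (to pick $L$ with $\p{\max(C^*,t^*)\ge L}<\eps$ in the proof of \refP{prop:lb}), nothing is lost. One point that deserves to be made explicit in a polished write-up: the reformulation of $\{C^*<\infty\}$ as ``there exists an infinite ancestral ray staying above $g-c$'' -- which is what you use when concatenating rays and when translating subtrees -- follows from the definition ``for all $t$, $I(t,g-c)\ne\emptyset$'' by a K\"onig's-lemma / compactness argument, using that the BBM genealogy is a.s.\ locally finite. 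That equivalence is true, but it is not automatic and merits a sentence.
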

\begin{proof}
The proof technique is sometimes called an amplification argument. 
Consider the $n(t)\approx e^t$ independent copies of the BBM rooted at time-$t$ particles, the $i$'th copy having initial individual at position $X_i(t)$. Suppose the ``translate by $X_i(t)$'' of the event from Lemma~\ref{lem:matt} occurs in the $k$'th copy; more precisely, suppose that for all $t' \ge t$ there is a descendant $X_j(t')$ of $X_k(t)$ such that for all $s \in [t,t']$, 
\[
X_{j,t'}(s) - X_k(t) \ge g(s-t) \ge g(s)-\sqrt 2 t - c^* t^{1/3}. 
\]
For $s \le t$ we also have 
\[
X_{j,t'}(s) \ge  \inf_{i \ge 1} X_i(s) \ge \inf_{s \le t}\inf_{i \ge 1} X_i(s) \ge g(s) + \inf_{s \le t}\inf_{i \ge 1} X_i(s) - \sup_{s \le t} g(s)\, ,
\]
so in this case 
\[
C^*\le -\inf_{s \in [0,t]}\inf_{i \ge 1} X_i(s) + \sqrt 2 t + c^* t^{1/3}. 
\]
By the branching property (i.e. the independence of the trajectories emanating from each of the particles $(X_i(t),i \ge 1)$), it follows that 
\begin{align}\label{eq:cor_3bd}
\p{C^* \le 3t + \sqrt 2 t + c^* t^{1/3}} & \le \p{n(t) \le 2^t} + \p{\inf_{s \in [0,t]}\inf_{i \ge 1} X_i(s) \le -3t} + (1-p^*)^{2^t}, 
\end{align}
where $p^*$ is the constant from Lemma~\ref{lem:matt}. Since $n(t)$ is Geom$(e^{-t})$ we have $\p{n(t) \le 2^t} \le (2/e)^t$. 
Finally, let $\sigma = \inf\{s: \inf_{i \ge 1} X_i(s) \le -3t\}$, so $\inf_{s \in [0,t]}\inf_{i \ge 1} X_i(s) \le -3t$ if and only if $\sigma < t$. 
Considering the descendants of the first individual to reach position $-3t$, by symmetry we have 
\[
\Cprob{\inf_{i \ge 1} X_i(t) \le -3t}{\sigma < t} \ge \frac{1}{2}\, ,
\]
so 
\[
\p{\sigma < t} \le 2 \p{\inf_{i \ge 1} X_i(t) \le -3t} \le 2e^t \p{N(0,t) \le -3t} \le e^{-7t/2}\, .
\]
These bounds and (\ref{eq:cor_3bd}) then yield 
\[
\p{C^* \le 3t + \sqrt 2 t + c^* t^{1/3}} \le (2/e)^t + e^{-7t/2} + (1-p^*)^{2^t}\, .
\]
This can be made arbitrarily small by taking $t$ large. 
\end{proof}
In order to prove Proposition~\ref{prop:lb}, we require one final lemma which shows that a small mass will quickly increase to form some region of constant density within a constant distance.
\begin{lem}\label{lem:mass_to_one}
For all $\eps > 0$ and $m \in (0,1)$ there is $C > 0$ such that for all $\bx \in \R^k$ and $\bm \in (0,1]^k$, if $z := \sum_{\{i:|x_i|< 1\}} m_i > 0$ then 
\begin{equation}\label{eq:mass_21}
\psub{\mathbf{x},\mathbf{m}}{\exists t \in [0,C(1+\log(1/z))],x \in [-C,C] : \zeta(t,x) \ge m} \ge 1-\eps\, .
\end{equation}
\end{lem}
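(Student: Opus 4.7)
The plan is to formalize the density self-correction heuristic of the proof sketch: on the complement of the target event we have $\zeta(s,\cdot) < m$ uniformly in $[-C,C]$, so along any ancestral trajectory confined to that region the mass decays at rate at most $m<1$, while binary branching adds population at rate $1$; consequently a confined subpopulation must grow exponentially at rate $\alpha-m>0$, eventually forcing $\zeta \ge m$ somewhere.

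Fix $C$ large (depending on $m$ and $\eps$), set $T = C(1+\log(1/z))$, and let $\sigma = \inf\{t : \max_{x \in [-C,C]} \zeta(t,x) \ge m\}$; the goal is $\p{\sigma \le T} \ge 1-\eps$. Define the confined-descendant set $I(T) = \{i \in N(T) : X_{i,T}(s) \in [-(C-1), C-1]\text{ for all } s \in [0,T]\}$ and the initial-mass-weighted count $Y_w(T) = \sum_{i \in I(T)} m_{j_{i,T}(0)}$. Since $\bX$ evolves as standard BBM independently of the masses, $Y_w(T)$ is a functional of pure BBM; by many-to-one combined with the principal Dirichlet eigenvalue estimate for Brownian motion in the strip $[-(C-1), C-1]$ (applied to initial positions in $[-1,1]$),
\[
\E{Y_w(T)} \ge z\, c_C\, e^{\alpha T}, \qquad \alpha := 1 - \frac{\pi^2}{8(C-1)^2}.
\]
Take $C$ large enough that $\alpha > m$. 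On $\{\sigma > T\}$, each $i \in I(T)$ satisfies $\zeta(s, X_{i,T}(s)) < m$ for all $s \le T$ (since $X_{i,T}(s) \in [-C,C]$), so $M_i(T) \ge m_{j_{i,T}(0)}\, e^{-mT}$. Summing and tiling $[-(C-1), C-1]$ by $O(C)$ unit neighborhoods, pigeonhole yields some $x \in [-C, C]$ with $\zeta(T,x) \ge Y_w(T)\, e^{-mT}/(2(C-1))$; for this to remain below $m$ forces $Y_w(T) \le 2(C-1) m\, e^{mT}$. Hence
\[
\p{\sigma > T} \le \p{Y_w(T) \le 2(C-1)m\, e^{mT}}.
\]

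The ratio of threshold to expectation is $O(C)\, z^{(\alpha-m)C-1}\, e^{-(\alpha-m)C}$, which for $C$ sufficiently large is at most any prescribed $\delta_0 > 0$ uniformly in $z \in (0,1]$. To conclude one needs a lower-tail bound $\p{Y_w(T)/\E{Y_w(T)} < \delta_0}\le \eps$: in the worst case of a single initial particle at the origin with mass $z$, $Y_w(T)/z$ is (up to normalization by the principal Dirichlet eigenfunction) a Kesten--Stigum martingale converging almost surely to a limit $W_\infty$ which is strictly positive on the event that confined BBM does not become extinct. The confined-BBM extinction probability $q(C)$ tends to $0$ as $C \to \infty$ (by comparison with unconfined BBM, which never dies out), so $C$ can be chosen large enough that $q(C) + \p{0 < W_\infty < \delta_0} \le \eps$, and then $T \ge C$ is large enough for the distributional convergence $Y_w(T)\, e^{-\alpha T}/z \to W_\infty$ to deliver the required bound. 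The main obstacle is precisely this uniform lower-tail estimate: a plain Paley--Zygmund (second-moment) bound on $Y_w(T)$ only yields a constant-order success probability, and boosting to $1-\eps$ requires either the Kesten--Stigum argument just sketched or an amplification exploiting the branching property at an intermediate time to produce many near-independent attempts.
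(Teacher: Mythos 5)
Your plan captures the correct heuristic and is genuinely different from the paper's proof: you work in a single fixed strip $[-(C-1),C-1]$, extract the growth rate from the Dirichlet eigenvalue, compute a first moment by many-to-one, and try to control the lower tail by a Kesten--Stigum argument. The paper instead uses per-particle strips of fixed width $c$ (Fact~\ref{fact:strip_growth}: confined BBM has $\ge (e-\eps)^t$ descendants with probability $\ge 1-\eps$, for $c$ large), so it only ever needs a per-subtree success probability close to $1$ and never touches the eigenfunction or the additive martingale.

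The genuine gap is exactly the uniform lower tail, and there are two issues with the route you sketch. First, the claim that the worst initial configuration is a single particle of mass $z$ is plausible but not automatic and is not proved; the masses $m_i$ are arbitrary subject to summing to $z$, and one must rule out the possibility that the heavy $m_i$ correlate badly with the failing confined subtrees. The paper's ballot argument is precisely the device that makes this uniform: order the $m_i$ decreasingly, let $S_n$ count the good indices among the first $n$ (good meaning the $i$-th confined subtree grows as in Fact~\ref{fact:strip_growth}), note $S$ dominates a Bernoulli$(1-\eps)$ walk, and apply a ballot-type inequality to get $S_n \ge (1-\eps^{1/2})n$ simultaneously for all $n$ with probability $\ge 1-\eps^{1/2}$; since the $m_i$ are decreasing this transfers, by an Abel-summation rearrangement, to ``at least $(1-\eps^{1/2})z$ of the initial mass sits on good indices.'' That converts a per-particle success probability into a per-mass one with no case analysis. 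Second, even in the single-particle case your Kesten--Stigum step has a circularity to untangle: $C$ is chosen large so that $q(C)$ and $\alpha-m$ are controlled, but then the time $T_1$ after which $M(T)$ is close to $W_\infty^{(C)}$, and the threshold $\delta_0$ for which $\p{W_\infty^{(C)} \le \delta_0}$ is small, both depend on $C$; you also need to pass from the eigenfunction-weighted martingale $M(T)$ back to the unweighted count $Y_w(T)$ using only that the eigenfunction is bounded above (it vanishes at the boundary, so there is no uniform lower bound). All of this can be done with a two-stage choice of constants since the time horizon $C(1+\log(1/z))$ is at least $C$, but it needs to be written out, and the resulting argument is considerably heavier than the paper's, which avoids the martingale entirely by paying only for strip survival probability $\ge 1-\eps$ and a ballot inequality.
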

To prove the lemma we use the following fact, whose proof is left to the reader. 
\begin{fact}\label{fact:strip_growth}
For all $\eps > 0$, there are $t_0=t_0(\eps)$ and $c=c(\eps)>0$ such that 
\begin{equation}\label{eq:hhk}
\p{\forall t \ge t_0, \#\{i: \forall s \in [0,t],|X_{i,t}(s)|<c\} \ge (e-\eps)^t} > 1-\eps. 
\end{equation}
\end{fact}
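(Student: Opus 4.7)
The plan is to exhibit, for each $\eps > 0$, a strip width $c=c(\eps)$ large enough that $N_c(t) := \#\{i:\forall s\le t,\, |X_{i,t}(s)|<c\}$ grows exponentially at rate at least $\log(e-\eps)$ with probability at least $1-\eps$; the tool is the additive martingale for BBM with absorption at the sides of the strip. Let $\lambda_c = \pi^2/(8c^2)$ and $\phi_c(x)=\cos(\pi x/(2c))$ be the principal Dirichlet eigenvalue and eigenfunction of $\tfrac12\partial^2$ on $(-c,c)$. By the many-to-one lemma together with It\^o's formula,
\[
W^c_t \;:=\; e^{-(1-\lambda_c)t}\sum_{i:\, |X_{i,t}(s)|<c\,\forall s\le t}\phi_c(X_i(t))
\]
is a non-negative $(\cF_t)$-martingale with $\E{W^c_t}=\phi_c(0)=1$. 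A standard many-to-two second moment computation shows $\sup_t\E{(W^c_t)^2}<\infty$ for $c$ large (taking $\lambda_c<1/2$ is more than enough), so $W^c_t\to W^c_\infty$ in $L^2$ with $\E{W^c_\infty}=1$. Since $\phi_c\le 1$, on $\{W^c_\infty\ge\eta\}$ we immediately get $N_c(t)\ge\eta\,e^{(1-\lambda_c)t}$ for all large $t$, which beats $(e-\eps)^t$ as soon as $1-\lambda_c>\log(e-\eps/2)$.

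It remains to arrange $\p{W^c_\infty\ge\eta}>1-\eps$ for some $\eta>0$. Since $W^c_\infty\ge 0$ has unit mean and bounded variance, a Paley--Zygmund bound reduces this to showing $\p{W^c_\infty>0}$ can be made arbitrarily close to $1$ by enlarging $c$. I would do this by amplification, in the spirit of Corollary~\ref{C_star_cor}: fix $t_1=t_1(\eps)$ large, then choose $c\gg\sqrt{t_1\log t_1}$ so that with probability $\ge 1-\eps/2$ there are at least $e^{t_1/2}$ particles $X_i(t_1)\in(-1,1)$ whose ancestral paths lie entirely in $(-c+1,c-1)$ (via a Gaussian tail union bound over the $\lesssim e^{t_1}$ ancestral Brownian paths, plus standard lower-tail estimates on the BBM population). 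Each such particle initiates an independent BBM whose subtree, by translation invariance, yields a positive limiting martingale confined to $(X_i(t_1)-(c-1),X_i(t_1)+(c-1))\subset(-c,c)$ with probability $S_{c-1}:=\p{W^{c-1}_\infty>0}>0$ (the supercriticality condition $1>\lambda_{c-1}$ holds once $c-1>\pi/(2\sqrt 2)$). The probability that every one of these $\ge e^{t_1/2}$ independent subtree martingales vanishes is then at most $(1-S_{c-1})^{e^{t_1/2}}<\eps/2$, once $t_1$ is large.

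The main technical obstacle is the uniform-in-$t$ second-moment bound for $W^c_t$, which requires a many-to-two computation together with a Green's-kernel estimate for the killed generator on $(-c,c)$; this is standard but slightly delicate because one must show the bound does not blow up as $t\to\infty$. Secondary care is needed to choose the parameters in the right order: first fix $\eps$, then $t_1=t_1(\eps)$, and only then $c$ large enough both to cover the ancestral Brownian fluctuations on $[0,t_1]$ via the union bound and to keep $\lambda_c$ small enough that $1-\lambda_c>\log(e-\eps)$ (so also $e^{1-\lambda_c}>e-\eps$). The two requirements on $c$ are compatible because the first only asks $c$ to grow like a polylog of $t_1$, while the second is a fixed lower bound depending on $\eps$.
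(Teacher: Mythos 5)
The paper does not actually give a proof of this Fact: it is ``left to the reader'' with only a two-sentence hint (show $p(c)\to 1$ as $c\to\infty$, then use a branching-process approximation). Your additive-martingale proposal is a legitimate and fairly standard way to implement that hint, but as written it has two gaps worth flagging.

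\emph{A logical gap in the amplification step.} You aim to show $\p{W^c_\infty>0}$ is close to $1$ and then feed this into the first paragraph, but the amplification only shows that with high probability \emph{some subtree martingale} $W^{c-1,i}_\infty$ (for a width-$2(c-1)$ sub-strip) is positive. This does not by itself imply $W^c_\infty>0$: because $\lambda_c<\lambda_{c-1}$, a surviving subtree produces only $\Theta\bigl(e^{(1-\lambda_{c-1})(t-t_1)}\bigr)$ confined particles, which is $o\bigl(e^{(1-\lambda_c)t}\bigr)$, so on that event $W^c_t$ may still converge to zero. You either need a Kesten--Stigum-type equivalence (``$W^c_\infty>0$ iff survival in $(-c,c)$''), which is strictly more than the bare $L^2$ bound, or---more cleanly---bypass $W^c_\infty$ entirely: a surviving subtree already gives $N_c(t)\gtrsim e^{(1-\lambda_{c-1})(t-t_1)}$, which beats $(e-\eps)^t$ as soon as $c-1$ is large enough that $1-\lambda_{c-1}>\log(e-\eps)$. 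Then the role of the first paragraph's $L^2$ machinery is only to guarantee $S_{c'}:=\p{W^{c'}_\infty>0}>0$ for some fixed $c'=c'(\eps)$, which is exactly what the amplification needs. Either route also lets you make $t_0$ deterministic: on $\{W^{c'}_\infty\ge\eta\}$ you have $N_c(t)\ge\eta e^{(1-\lambda_{c'})t}$ only for \emph{randomly} large $t$, so one must additionally control $\p{\exists t\ge t_0: W^{c'}_t<\eta/2}$ via Doob's $L^2$ maximal inequality applied to $(W^{c'}_t-W^{c'}_{t_0})_{t\ge t_0}$.

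\emph{A numerical error in the union bound.} The claim that $c\gg\sqrt{t_1\log t_1}$ (and the remark that $c$ ``only grows like a polylog of $t_1$'') is wrong. There are $\approx e^{t_1}$ ancestral Brownian paths, so the union bound gives $\p{\text{some path exits }(-c,c)\text{ before }t_1}\lesssim e^{t_1}e^{-c^2/2t_1}$, which is only small when $c\gtrsim t_1$---as it must be, since the rightmost particle at time $t_1$ is near $\sqrt 2\,t_1$. This does not break the argument: the other constraint on $c$ (that $\lambda_{c'}$ be small enough) is a fixed lower bound depending only on $\eps$ and is trivially compatible with $c\asymp t_1(\eps)$. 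But the stated scaling is incorrect, and once $c\gtrsim t_1$ the needed lower-tail bound on the number of time-$t_1$ particles still in the strip becomes easy (whp \emph{no} path exits, so one only needs the routine bound $n(t_1)\ge e^{t_1/2}$ whp).
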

One straightforward way to prove the fact is as follows. First show that $p(c)$, the survival probability of branching Brownian motion with absorbing boundaries at $-c$ and $c$, started from the origin, satisfies $p(c) \to 1$ as $c \to \infty$. Then use a suitable branching process approximation. 
As an aside, we note the very nice recent work \cite{hhl} on the asymptotics of this survival probability for $c$ near the critical width $\hat{c}$ below which $p(c)=0$.
\begin{proof}[Proof of Lemma~\ref{lem:mass_to_one}]
The claim is clearly true if $z \ge m$, and we hereafter assume $z \in (0,m)$. We also assume $\eps$ is small enough that $(e-\eps)e^{-m}(1-\eps^{1/2}) > (1+\eps)$; this can only make our job harder. 

By relabelling, we may assume that for some $1 \le k' \le k$ we have $|x_i| < 1$ for $1 \le i \le k'$ and $|x_i| > 1$ for $i > k'$, so that $z = \sum_{1 \le i \le k'} m_i$. We also assume $x_1,\ldots,x_k$ are ordered so that $(m_i,1 \le i \le k')$ is decreasing. 

For $1 \le i \le k'$ let $J_i(t)$ index the time-$t$ descendants of $x_i$ whose trajectory stays fairly near the origin, i.e., 
\[
J_i(t) = \{\ell \ge 1: j_{\ell,t}(0)=i, |X_{\ell,t}(s)-x_i|<c~ \forall s \in [0,t]\}\, ,
\]
where $c$ is chosen as in Fact~\ref{fact:strip_growth}. By that fact, we then have for $t_0=t_0(\eps)$
\[
\p{\forall t \ge t_0, |J_i(t)| \ge (e-\eps)^t} > 1-\eps. 
\]
For $1 \le n \le k'$ write 
\[
S_n = \#\{1 \le i \le n: \forall t \ge t_0(\eps), |J_{i}(t)| \ge (e-\eps)^t\}. 
\]
Then $S=(S_n)_{1 \le n \le k'}$ stochastically dominates a random walk with Bernoulli$(1-\eps)$ steps. It follows by a ballot-type theorem (\citep[Corollary 11.17]{kallenberg}, for example, is sufficient for our needs) that for any $A>1$, 
\begin{equation}\label{eq:ballot}
\p{\exists n \le k': S_n < (1-A\eps) n} < A^{-1}.
\end{equation}

We hereafter assume $t \ge t_0(\eps)$. Now suppose that $\zeta(s,x) < m$ for all $s \le t$ and $|x| \le c+1$. Then for each $1 \le i \le k'$, for all $j \in J_i(t)$, $M_j(t) \ge m_i \cdot e^{-mt}$, so 
\[
\sum_{1 \le i \le k'} \sum_{j \in J_i(t)} M_j(t) 
\ge e^{-mt} \sum_{1 \le i \le k'} m_i |J_i(t)| 
\ge e^{-mt} (e-\eps)^t \sum_{1 \le i \le k'} m_i \I{|J_i(t)| \ge (e-\eps)^t}\, .
\]
Since the masses $m_i$ are decreasing in $i \in \{1,2,\ldots,k'\}$, if $S_n > (1-\eps^{1/2}) n$ for all $n \le k'$ then it follows that
\[
\sum_{1 \le i \le k'} \sum_{j \in J_i(t)} M_j(t) \ge e^{-mt} (e-\eps)^t \sum_{i=1}^{k'} m_i(1-\eps^{1/2}) = e^{-mt} (e-\eps)^t (1-\eps^{1/2}) \cdot z .
\]
By our assumption on $\eps$, we have  
$e^{-m}(e-\eps) > (1+\eps)/(1-\eps^{1/2}) > 1+2\eps$, so this gives 
\[
\sum_{j: |X_j(t)| < c+1} M_j(t) \ge (1+2\eps)^{t-1}z > c+2\, ,
\]
the last inequality provided that $t\ge 1+\log_{1+2\eps} ((c+2)/z)$. Since $[-c-1,c+1]$ can be covered by $\lfloor c+2 \rfloor $ intervals of radius $1$, we see that in this case there is $x$ with $|x| \le c+1$ such that $\zeta(t,x) > 1$. 

To sum up: assuming the random walk $S$ behaves, and that $t \ge t_0(\eps)$ and $t\ge 1+\log_{1+2\eps} ((c+2)/z)$, either $\zeta(s,x) \ge m$ for some $s\le t$ and $|x| \le c+1$, or else $\zeta(t,x) > 1$ for some $x$ with $|x| \le c+1$. By taking $A=\eps^{-1/2}$ in (\ref{eq:ballot}) and choosing $C=C(\eps)$ appropriately, we obtain 
\[
\psub{\mathbf{x},\mathbf{m}}{\exists s \in [0,C(1+\log(1/z))],x \in [-C,C]: \zeta(t,x) \ge m} \ge 1-\eps^{1/2}. \qedhere
\]
\end{proof}
We are now ready for the final proof of the paper. 
\begin{proof}[Proof of Proposition~\ref{prop:lb}]
Fix $m \in (0,1)$. Let $Z=2\cdot 10^8$ and 
\[t^* = \inf\{r \ge 0: \forall t\ge r, \sup_{s \in [0,t]} \sup_{x \in \R} \zeta(s,x) \le Z \log t\},\]
and note that $t^* < \infty$ almost surely by Proposition~\ref{prop:massbound} and the first Borel-Cantelli lemma. 

Fix $\eps > 0$ and by Corollary \ref{C_star_cor} choose $L>1$ large enough that $\p{\max(C^*,t^*) \ge L} <\eps$. 
Fix $t$ much larger than $L$ (so that $\log \log t > L$, say). 

Let $\sigma = \inf\{s \ge t^{1/4}: D(s,1/t) \ge g(s)-C^*-1\}$. We first suppose that $\sigma > t$, so that for all $s \in [t^{1/4},t]$ we have $D(s,1/t) < g(s)-C^*-1$. Let $i^*$ be such that $X_{i^*,t}(s) \ge g(s)-C^*$ for all $s \in [0,t]$; such $i^*$ exists by the definition of $C^*$. If $t^* \le L<t$ then we have 
\begin{align*}
-\log M_{i^*}(t)	& = \int_0^t \zeta(s,X_{i^*,t}(s)) \mathrm{d}s \\
			& \le \int_0^{t^{1/4}} \zeta(s,X_{i^*,t}(s))\mathrm{d}s + \int_{t^{1/4}}^t \frac{1}{t}\mathrm{d}s\, \\
			& \le Zt^{1/4}\log t + 1\, ,
\end{align*}
the last bound because when $t \ge t^*$ the integrand is at most $Z\log t$. 

Let $C=C(\eps,m)$ be the constant from Lemma~\ref{lem:mass_to_one}. Then by that lemma (applied with $z=M_{i^*}(t) \ge \exp(-1-Zt^{1/4}\log t)$) and the Markov property, given that $\{t^* \le t\}$, with probability at least $1-\eps$ there is $s\in (t,t+C(2+Zt^{1/4}\log t))$ and $x$ with $|x| \le C$ such that $\zeta(s,X_{i^*}(t)+x) \ge m$. 
If this occurs, and additionally $C^* \le L$ we have 
\[
D(s,m) \ge  X_{i^*}(t)-C\ge g(t)-C^* -C \ge g(s)-s^{1/4}\log^2 s\, ,
\]
the last bound holding for all $t$ sufficiently large since $s -t \le C(2 + Z t^{1/4} \log t)$, and for $s$ and $t$ large we have $g(s) -g(t) =O(s-t)$. We thus have 
\begin{align}
	& \probC{\exists s \ge t: D(s,m) \ge g(s)-s^{1/4}\log^2 s}{\sigma > t} \nonumber\\
\ge	&  \probC{\max(C^*,t^*)<L,~\exists s \ge t: D(s,m) \ge g(s)-s^{1/4}\log^2 s}{\sigma > t}\nonumber\\
	& 
	- \probC{\max(C^*,t^*) \ge L}{\sigma > t} \nonumber\\
\ge	&1-\eps-\probC{\max(C^*,t^*) \ge L}{\sigma > t}\, .\label{eq:sigma_ge_t}
\end{align}

Next suppose that $\sigma \le t$. Apply the strong Markov property at time $\sigma$, and apply Lemma~\ref{lem:mass_to_one} just as above (but with a starting mass in $[D(\sigma ,1/t)-1,D(\sigma,1/t)+1]$ of at least $1/t=e^{-\log t}$ rather than $e^{-1-Zt^{1/4}\log t}$). We obtain that 
with probability at least $1-\eps$ there is $s \in (\sigma,\sigma+C(1+\log t))$ such that 
\[
D(s,m) \ge g(\sigma)-C-C^* \ge g(s)-\log^2 s\, ,
\]
the last bound holding for $t$ sufficiently large since $s-\sigma \le  C(1+\log t)$ and $\log t \le 4\log \sigma \le 4\log s$, and under the assumption $C^* \le L$. 

Since $\sigma \ge t^{1/4}$ and $\log^2 s < s^{1/4} \log^2 s$, it follows that 
\begin{align*}
	& \probC{\exists s \ge t^{1/4}: D(s,m) \ge g(s)-s^{1/4}\log^2 s}{\sigma \le t} \\
\ge	& 1-\eps- \probC{\max(C^*,t^*) \ge L}{\sigma \le t}\, .
\end{align*}
Now combine this with (\ref{eq:sigma_ge_t}) using the law of total probability. 
We chose $L$ large enough that $\p{\max(C^*,t^*) \ge L} \le \eps$, so we obtain 
\[
\p{\exists s \ge t^{1/4}: D(s,m) \ge g(s)-s^{1/4} \log^2 s} > 1-2\eps\, .
\]
Finally, if $D(s,m) \ge g(s)-s^{1/4} \log^2 s$ then 
\[
\frac{\sqrt{2}s-D(s,m)}{s^{1/3}} \le c^* - \frac{c^*}{\log^2(s+e)} + \frac{1}{s^{1/3}} + \frac{\log^2 s}{s^{1/12}}\, ,
\]
which tends to $c^*$ as $s \to \infty$. 
\end{proof}
\section{Discussion and questions}\label{sec:conc}

\begin{itemize}
\item The analysis of the paper should carry through fairly straightforwardly to higher dimensions $\mathbb{R}^k$, provided we redefine $d(t,m)$ and $D(t,m)$ as
\[
d(t,m) = \min\{|x|: \zeta(t,x) < m\}, \quad D(t,m) =\max\{|x|: \zeta(t,x) > m\}\, .
\]
At time $t$, the density is then at least $m$ within the ball of radius $d(t,m)$ around $0$, and less than $m$ outside the ball of radius $D(t,m)$ around $0$. 
The proof of the lower bound is then the same as in Sections 4 and 5.
The proof of the upper bound requires ruling out the possibility that the modulus of a particle in the BBM stays ahead of a moving barrier $g$ even though it cannot have consistent displacement more than $g$ in any fixed direction.
In order for our proof techniques to carry over, this requires sample path estimates for $\mathrm{Bes}(k)$ processes analogous to the ones derived in this work for Brownian motion. We expect such estimates to hold for all $k \ge 1$, though verifying this may be technical.

\item We believe that Proposition~\ref{prop:lb} predicts the ``true'' front location, in that both $D(t,m)$ and $d(t,m)$ are typically at distance $o(t^{1/3})$ from $\sqrt{2}t-c^* t^{1/3}$ when $t$ is large. This is our justification for the remark in the final paragraph of Section~\ref{sec:intro}. 


\item In the same way as the KPP equation describes the evolution of multiplicative functionals of BBM \cite{mckean75}, it seems plausible that the model proposed in this work (or a related model) should be connected to an equation of the form 
\[
u_t = \frac{1}{2} u_{xx} - u(1-u) - \int_{\{y:|y-x|<1\}} u(t,y)\mathrm{d}y. 
\]
This equation has steady states at $0$ (unstable) and $1/2$ (stable), and is redolent of a family of ``non-local'' Fisher-KPP-type equations which was introduced \cite{britton89} to model populations in which aggregation can have both a competitive advantage (safety in numbers) and disadvantage (due to competition for resources). These equations have received substantial study  \cite{berestycki09,britton90,gourley00}; the survey \cite{volpert2009reaction} contains many further references, as well as perspective on the biological motivations for such study. 

If a probabilistic model for such an equation were found, it could yield new results on, e.g., the front propagation speed or temporal fluctuations of solutions to the above equation. Conversely, a glance at that literature suggests new probabilistic questions: for example, what if the effect of competition is described by a kernel $\kappa$, where $\kappa(|x-y|)$ describes the degree of competition for resources between individuals at spatial positions $x$ and $y$? In our model we took $\kappa(|x-y|) = \I{|x-y| \in (0,1)}$; a kernel which allows substantial long-range interaction might yield rather different dynamics.

\item As mentioned in the introduction, one may reasonably consider the mechanism for mass growth in our model -- both children inherit the mass of the parent -- nonphysical. More physically realistic (at least for amoebae) is for the children to each have half the mass of the parent. One must also then change the rules to allow for mass growth; a reasonable modification is to take 
\[
\overline{\zeta}(t,x) = \sum_{\{i:|X_i(t)-x| \le 1\}} M_i(s), 
\]
and 
\[
M_i(t) = \exp(\int_0^t (1-\overline{\zeta}(s,X_i(s))) \mathrm{d}s)\, .
\]
In other words, the mass of an individual can increase, when there is little nearby competition for resources -- but the larger particles get, the harder it is for them to sustain themselves. The key point is that $1$ is still a universal upper bound on the greatest mass of any particle. 

We conjecture that any lack of physical realism in our model is relatively insignificant for the long term behaviour, and more concretely that the front location behaves similarly in the two models. 
As partial evidence for this, we note that the analyses from Sections~\ref{sec:max_density} and~\ref{sec:lower} carry through essentially unchanged for the model described above.

The argument from Section~\ref{sec:ub}, however, breaks down, because a particle moving through an environment of constant density $m<1$ will have mass which does {\em not} decay exponentially, even when the loss of mass due to branching is taken into account. Instead, such a particle will (at large times) have a mass which is random and typically of order $\Theta(1-m)$. 

Because of this, the existing argument only establishes Proposition~\ref{upper_bound} in a highly weakened form, with the condition $m\ge 1$ rather than $m > 0$. (It is possible to do very slightly better, by considering a variable bound $m=m(t)$. One can then take $m(t) < 1$ if $1-m(t)$ decays sufficiently quickly, but the pain-to-gain ratio in writing down such an argument in detail does not seem favourable.) But $m \in (0,1)$ is the really meaningful region. Proving a genuine analogue of Proposition~\ref{upper_bound} for this model seems to us the only missing step to a proof of Theorem~\ref{thm:main} for the modified dynamics.

\item In the variant just described, one intriguing possibility is that there may now be particles with mass $\Theta(1)$ at large times. If there are, they will be found near the front, since that is where they can find food. Do they really exist? 

\item More generally, one may take 
\[
M_i(t) = \exp(\int_0^t (a-b\overline{\zeta}(s,X_i(s))) \mathrm{d}s)\, .
\]
This looks, heuristically, like some sort of spatial logistic growth \cite{etheridge,lambert}. It may be interesting to investigate what different behaviours can occur as the parameters $a$ and $b$ are varied. 
\end{itemize}

\section{Acknowledgements}
The first author thanks Jeremy Quastel for asking a stimulating question about branching random walk with cutoff \cite{MR3013924}, which planted the seed of the current paper, as well as Elie A\"id\'ekon, Henri Berestycki, Julien Berestycki, Nathana\"el Berestycki and James Martin, for valuable comments. 
The second author thanks Julien Berestycki and David Corlin-Marchand for interesting discussions.
Both authors thank the Newton Institute, where parts of this manuscript were written.  
                 
\appendix

\section{Estimates for the Upper Bound} \label{append_upper}
We first turn to the proof of Lemma~\ref{tailor_made_roberts}. 
The proof relies on the following sample path estimate for Brownian motion. 
\begin{lem} \label{roberts_lemma}
Suppose $f:[0,t]\rightarrow \R$ and $L:[0,t]\rightarrow (0,\infty)$ are twice continuously differentiable functions, with $f(0)<0$, $f(0)+L(0)>0$ and $f$ increasing. We assume that there exists a constant $Q>0$ such that 
\[
|L'(0)|L(0)+|L'(u)|L(u)+\int_0^u |L''(s)|L(s)\, ds +\int_0^u |f''(s)| L(s)\, ds -|L'(0) | f(0) \leq Q 
\]
for all $0\leq u\leq t$, which we call Assumption (A). Then there is a constant $M(Q)$ such that for $0\leq p\leq 1$,
\begin{align*} &\p{B(s)-f(s)\in (0,L(s))\forall s\leq u, B(u)-f(u)\in (pL(u),L(u))}\\
&\quad \leq M(Q)\exp(-\tfrac{1}{2}\int_0^u f'(s)^2 \, ds - \int_0^u \frac{\pi^2}{2L(s)^2}\, ds -p f'(u)L(u)-f'(0)f(0)+\tfrac{1}{2}\log L(u)).
\end{align*}
\end{lem}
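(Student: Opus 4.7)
\textbf{Proof plan for Lemma~\ref{roberts_lemma}.}

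The plan is to absorb the drift $f$ by a first Girsanov change of measure, and then to extract the Dirichlet eigenvalue $\pi^2/(2L(s)^2)$ from the survival probability of Brownian motion in the (time varying) tube $(0,L(s))$. First I would introduce $W_s = B(s)-f(s)$, so that the event in question becomes $\{W_s\in(0,L(s))\,\forall s\le u,\ W_u\in(pL(u),L(u))\}$ with $W_0=-f(0)\in(0,L(0))$. Define the martingale $\mathcal{E}_s=\exp(\int_0^s f'(r)\,dB(r)-\tfrac{1}{2}\int_0^s f'(r)^2\,dr)$ and let $Q$ be the measure with $dQ/dP=\mathcal{E}_u$; then under $Q$, $W$ is a standard Brownian motion started at $-f(0)$. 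Writing $P(\text{event})=E_Q[\mathcal{E}_u^{-1}\mathbf{1}_{\text{event}}]$ and using Itô's formula to integrate $\int_0^u f'(s)\,dW_s$ by parts, one rewrites
\begin{align*}
-\log\mathcal{E}_u
= \tfrac{1}{2}\int_0^u f'(s)^2\,ds + f'(0)f(0) + f'(u)W_u - \int_0^u f''(s)W_s\,ds,
\end{align*}
so on the event we can pointwise upper-bound $-f'(u)W_u\le -pf'(u)L(u)$ (using $f$ increasing) and $\int f''(s)W_s\,ds\le\int|f''(s)|L(s)\,ds$. This yields the prefactor $\exp(-\tfrac{1}{2}\int f'^2-f'(0)f(0)-pf'(u)L(u)+\int|f''|L\,ds)$ multiplied by the pure tube probability $Q(W\in(0,L(\cdot)))$.

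The core of the argument is then to prove, for a standard Brownian motion $W$ started at $-f(0)\in(0,L(0))$,
\begin{equation*}
Q(W_s\in(0,L(s))\,\forall s\le u)\le M'(Q)\exp\Bigl(-\int_0^u\tfrac{\pi^2}{2L(s)^2}\,ds+\tfrac{1}{2}\log L(u)\Bigr),
\end{equation*}
with the error controlled by the remaining pieces of Assumption~(A). To do this I would perform a second (local) change of measure driven by the first Dirichlet eigenfunction $\psi(s,x)=\sin(\pi x/L(s))$, which on $[0,L(s)]$ realizes the eigenvalue $\lambda(s)=\pi^2/(2L(s)^2)$. A direct Itô computation gives
\begin{align*}
d\bigl(e^{\int_0^s\lambda(r)dr}\psi(s,W_s)\bigr)
= e^{\int_0^s\lambda(r)dr}\Bigl[\tfrac{\pi}{L(s)}\cos(\pi W_s/L(s))\,dW_s - \tfrac{\pi L'(s)}{L(s)^2}W_s\cos(\pi W_s/L(s))\,ds\Bigr],
\end{align*}
so this quantity is a true (local) martingale up to a drift term that can be integrated by parts in $s$ — the resulting boundary and quadratic-variation errors match precisely the ingredients $|L'(0)|L(0)$, $|L'(u)|L(u)$, $\int|L''(s)|L(s)\,ds$ appearing in Assumption~(A). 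This reduces the tube survival probability to an exponential martingale estimate (plus a Gaussian estimate of $\psi(u,W_u)$ on the conditioning set $W_u\in(pL(u),L(u))$, which supplies the $\tfrac{1}{2}\log L(u)$ term from the normalization of $\psi$).

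The hard part will be controlling the error terms from the $\psi$-transform cleanly enough to land exactly inside Assumption~(A): one must integrate by parts twice and bound the resulting $\int L''W\,ds$ and boundary terms using only $0<W_s<L(s)$. Rather than rebuild this from scratch, the simplest route is to appeal to the tube estimates for Brownian motion established by Roberts~\cite{matt_cmd}, which are proved precisely in this generality; the assumption~(A) is stated so that those hypotheses are satisfied. Thus the proof reduces to (i) the Girsanov computation above, explicit and short, and (ii) a careful verification that the functions $f,L$ of the lemma meet the requirements of the corresponding estimate in \cite{matt_cmd}. I expect step~(ii), especially tracking the exponential prefactor $M(Q)$ uniformly in $Q$, to be the only delicate part; everything else is the Itô/integration-by-parts bookkeeping sketched above.
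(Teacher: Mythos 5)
Your proposal is correct and matches the paper's approach exactly: the paper gives no standalone proof, stating only that the lemma follows by combining Proposition 4 and Lemma 7 of Roberts \cite{matt_cmd} (split into the two cases $\int_0^u L(s)^{-2}\,ds>1$ and $\leq 1$), and your plan likewise defers the core tube estimate to Roberts, with your Girsanov/eigenfunction sketch being a faithful account of what happens inside that reference. One small remark: your displayed identity for $-\log\mathcal{E}_u$ carries an overall sign flip (as written the right-hand side equals $+\log\mathcal{E}_u$), though the way you subsequently bound $-f'(u)W_u$ and $\int f''W\,ds$ on the event is consistent with the corrected sign.
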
 
This result is obtained by combining Proposition 4 and Lemma 7 in \cite{matt_cmd} to cover the two cases $\int_0^u \frac{1}{L(s)^2}ds>1$ and $\int_0^u \frac{1}{L(s)^2}ds\leq 1$.
In order to apply Lemma~\ref{roberts_lemma}, 
we exploit the existence of a solution to an integral equation; such a solution is used for a related purpose in Section 3.4 of \cite{jaffuel2012}. 
\begin{lem}
For $c<c^*$, there exists a constant $\alpha>0$ such that the equation
\begin{equation} \label{l_defn}
l(s)=\alpha + c s^{1/3}-\frac{\pi^2}{2\sqrt{2}}\int_{0}^s \frac{1}{l(u)^2}\, du
\end{equation}
has a continuous solution on $[0,1]$ which is twice continuously differentiable on $(0,1)$ with $l(s)>0$ for all $s \in[0,1)$ and with $l(1)=0$.
\end{lem}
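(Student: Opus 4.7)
The plan is to rewrite \eqref{l_defn} as an ODE, construct a solution by a standard existence argument, and then use a change of variables together with the ODE's scaling symmetry to arrange $l(1)=0$. Differentiating \eqref{l_defn} on any subinterval where $l>0$ yields
\[
l'(s) = \frac{c}{3s^{2/3}} - \frac{\pi^2}{2\sqrt{2}\, l(s)^2}, \qquad l(0)=\alpha.
\]
For fixed $\alpha>0$, a Banach contraction applied to $T(l)(s):=\alpha+cs^{1/3}-\frac{\pi^2}{2\sqrt{2}}\int_0^s l(u)^{-2}\,du$ on the closed set $\{l\in C([0,s_1]): l\geq \alpha/2\}$ produces a unique local continuous solution $l_\alpha$ for $s_1=s_1(\alpha)$ small enough, and on $s>0$ the right-hand side of the ODE is smooth in $(s,l)$ on $(0,\infty)\times(0,\infty)$, so Picard--Lindel\"of extends $l_\alpha$ uniquely to a maximal interval $[0,s^*(\alpha))$ on which $l_\alpha>0$.

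The crux is to show $s^*(\alpha)<\infty$ when $c<c^*$. I would introduce $h(\tau):=l_\alpha(e^\tau)e^{-\tau/3}$, converting the ODE to the autonomous equation
\[
\frac{dh}{d\tau}= F(h), \qquad F(h):=\frac{c-h}{3}-\frac{\pi^2}{2\sqrt{2}\, h^2}.
\]
On $(0,\infty)$, $F$ is smooth with $F(h)\to-\infty$ at both endpoints and a unique critical point at $h_0=(3\pi^2/\sqrt{2})^{1/3}$; the condition $F'(h_0)=0$ forces $\pi^2/(2\sqrt{2} h_0^2)=h_0/6$, whence $F(h_0)=c/3-h_0/2$. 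A direct calculation gives $3h_0/2=3^{4/3}\pi^{2/3}/2^{7/6}=c^*$, so for $c<c^*$ we have $\sup_{h>0}F(h)=F(h_0)<0$; set $\epsilon_0:=-F(h_0)>0$. Thus $dh/d\tau\leq -\epsilon_0$ as long as $h>0$, so $h$ reaches $0$ in finite $\tau$-time, and consequently $l_\alpha$ reaches $0$ at some $s^*(\alpha)<\infty$.

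Finally, the ODE is invariant under the scaling $(s,l)\mapsto(\lambda s,\lambda^{1/3}l)$: $\tilde l(s)=\lambda^{1/3}l_\alpha(s/\lambda)$ solves the same ODE with $\tilde l(0)=\lambda^{1/3}\alpha$ and $\tilde l(\lambda s^*(\alpha))=0$, so $s^*(\alpha)=\alpha^3 s^*(1)$. Choosing $\alpha:=s^*(1)^{-1/3}$ then yields $l_\alpha(1)=0$. Twice continuous differentiability on $(0,1)$ follows from smoothness of the ODE's right-hand side on $\{s>0,\ l>0\}$, continuity at $s=0$ is built into the integral equation, and continuity at $s=1$ follows from $l_\alpha(s)\to 0$ as $s\to 1^-$. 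The main obstacle -- and the only place where the hypothesis $c<c^*$ enters -- is the algebraic identification $3h_0/2=c^*$, which pins down $c^*$ as the critical value separating the regime where $F$ has zeros (and the $h$-ODE admits equilibria capable of supporting non-vanishing solutions) from the regime where $F<0$ strictly (and all trajectories collapse to zero in finite time).
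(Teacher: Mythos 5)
Your argument is correct, and it is a genuine self-contained proof where the paper simply cites Propositions 3.2 and 3.6(iii) of Jaffuel (2012) (after matching $\sigma=1/\sqrt 2$). The decisive step in your route is the logarithmic time-change $h(\tau)=l(e^{\tau})e^{-\tau/3}$, which converts the non-autonomous ODE into the autonomous one $h'=F(h)$ with $F(h)=\tfrac{c-h}{3}-\tfrac{\pi^2}{2\sqrt 2 h^2}$; the unique critical point of $F$ is $h_0=(3\pi^2/\sqrt 2)^{1/3}$, and the algebra $\tfrac{3}{2}h_0=3^{4/3}\pi^{2/3}2^{-7/6}=c^*$ cleanly exhibits $c^*$ as the threshold at which $\sup_{h>0}F(h)$ changes sign. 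Combined with the scaling $s^*(\alpha)=\alpha^3 s^*(1)$ (which follows from the invariance $(s,l)\mapsto(\lambda s,\lambda^{1/3}l)$ of the ODE together with uniqueness), choosing $\alpha=s^*(1)^{-1/3}$ forces $l(1)=0$. All the small checks go through: the Banach contraction on $\{l\ge\alpha/2\}$ with the sup norm handles the integrable singularity $s^{-2/3}$ at $s=0$; smoothness of the right-hand side on $(0,\infty)^2$ gives $C^2$ regularity on $(0,1)$ and maximal extension; since $h(\tau)\le h(\tau_0)-\epsilon_0(\tau-\tau_0)$ while $h\ge 0$, the blow-down time is finite, and $l$ is decreasing near its vanishing time so extends continuously to $l(1)=0$. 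What your approach buys is transparency: the autonomous phase-line analysis isolates exactly why $c<c^*$ is the correct hypothesis, whereas the paper defers this entirely to the cited reference. One could add a sentence noting explicitly that $h(\tau)\to\infty$ as $\tau\to-\infty$ so the initial data at $s=0$ corresponds to entering from $h=+\infty$, but this does not affect the finite-time hitting argument.
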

The lemma follows from Propositions~3.2 and~3.6(iii) of \cite{jaffuel2012}. More precisely, in those lemmas there is a variance term $\sigma$, and the value analogous to $c^*$ is $a_c=\tfrac{3}{2}(3\pi ^2 \sigma^2 )^{1/3}$. Taking $\sigma=1/\sqrt{2}$ yields the above formulation. 

\begin{proof}[Proof of Lemma~\ref{tailor_made_roberts}]
Fix $t>0$ large. Since the integral on the RHS of \eqref{l_defn} is non-negative, $l(s)\leq c+\alpha $ for all $s\in [0,1]$. 
Since $l(0)=\alpha>0$, we can fix $\beta \in(0,\min(\alpha^3/8,\alpha^3/(8c^3),1))$ sufficiently small that $l(s)\ge \alpha/2$ for $s <\beta$. 
Let 
\begin{equation} \label{L_defn}
L(s)= t^{1/3}\left(\frac{1+\beta }{u_t}\right)^{1/3}\,l\left(\frac{(s+\beta t) u_t}{t+\beta t}\right)
\end{equation}
 for $0\leq s \leq t$, where $u_t= \inf \{u\in [0,1]:l(u)\leq  2 t^{-1/12}\}$. Note that $u_t \to 1$ as $t \to \infty$.

We will prove that the lemma holds for above choice of $\beta$ and with the function $\Delta (s)=L(s)-Kt^{1/6}$, provided $K$ is sufficiently large. We must thus verify that $\Delta$ satisfies the requisite properties, and prove the bound (\ref{eq:tmr}). Write $f(s) = b(s)-Kt^{1/6} = \sqrt{2}s-c(s+\beta t)^{1/3}-Kt^{1/6}$; then to prove (\ref{eq:tmr}), it suffices to show that for $u \in [0,t]$ and all $x\in [0,L(u))$,
\begin{equation}\label{eq:toprove_fin}
\p{B(s) \in (f(s),f(s)+L(s)) \, \forall s\leq u, B(u)>f(u)+x}
\leq \exp (-u-t^{1/3}/K +\sqrt 2 (L (u)-x)).
\end{equation}
We establish this by applying Lemma~\ref{roberts_lemma} with the above functions $f$ and $L$. We next derive the properties of $f$, $L$, $L'$ and $L''$ which we require to do so. 

First, note that $f(0)=-c(\beta t)^{1/3}-Kt^{1/6}$ and $L(0)\ge t^{1/3}l((\beta u_t)/(1+\beta ))\ge \alpha t^{1/3}/2$ by our choice of $\beta$, so since $\beta<(\alpha/(2c))^3$ we have $L(0)+f(0)>0$ for $t$ sufficiently large.
For $t$ sufficiently large, $f$ is increasing on $[0,t]$, and $f$ and $L$ are twice continuously differentiable (since $l$ is $C^2$ on $(0,1)$).

We assume $t$ is sufficiently large that $(1+\beta)^{1/3}u_t^{-1/3}\le 2$. Then for $s\in [0,t]$, $L(s)\geq 2 (1+\beta )^{1/3} u_t^{-1/3}t^{1/4}\geq 2 t^{1/4}$ and $L(s)\leq (1+\beta )^{1/3} u_t^{-1/3}t^{1/3}(c+\alpha)\leq 2 (c+\alpha)t^{1/3}$, so for all $s\in [0,t]$,
\begin{equation} \label{L_bound}
2t^{1/4}\leq L(s)\leq 2(c+\alpha)t^{1/3}.
\end{equation}
Since $l$ is $C^2$ on $(0,1)$ and $l(s)>0$ for $s<1$, we can differentiate both sides of \eqref{l_defn} for $s\in (0,1)$ to give 
\begin{equation} \label{l_diff}
l'(s)=\frac{1}{3}c s^{-2/3}-\frac{\pi ^2}{2\sqrt 2}\frac{1}{l(s)^2}.
\end{equation} 
Hence $L$ is differentiable on $[0,t]$ with
\[
L ' (s)=t^{-2/3}\left(\frac{1+\beta }{u_t} \right)^{-2/3}\,l' \left(\frac{u_t(s+\beta t)}{t+\beta t} \right). 
\]
Also, for $\frac{u_t \beta }{1+\beta}\leq u \leq u_t$, by \eqref{l_diff} and the definition of $u_t$,
\begin{equation} \label{l_diff_bound}
|l'(u)|\leq \tfrac{1}{3}c (\beta u_t)^{-2/3}(1+\beta)^{2/3}+\tfrac{\pi^2}{8\sqrt 2}t^{1/6}\leq 2t^{1/6},
\end{equation}
for $t$ sufficiently large, so for all $s\in [0,t]$ we have 
\begin{equation} \label{L_diff_bound}
|L'(s)|\leq u_t^{2/3}(1+\beta)^{-2/3}t^{-2/3}2t^{1/6} \leq 2 t^{-1/2}.
\end{equation} 

This is a convenient moment to verify that the function $\Delta (s)=L(s)-Kt^{1/6}$ has the requisite properties. By \eqref{L_bound}, for $t$ sufficiently large, $\Delta:[0,t]\to [t^{1/4},2(c+\alpha)t^{1/3}]$. 
Also $\Delta (t)=L(t)-Kt^{1/6}< 2(1+\beta )^{1/3}u_t^{-1/3}t^{1/4}\le 4 t^{1/4} $ for $t$ sufficiently large. Finally, by \eqref{L_diff_bound}, $|\Delta '(s)|\le 2 t^{-1/2}\le 1$ for all $s\in [0,t]$, once again for $t$  sufficiently large. 


Proceeding with the proof of (\ref{eq:toprove_fin}), we now check that Assumption (A) holds for our choice of $f$ and $L$, for some constant $Q$ which does not depend on $t$. For $t$ sufficiently large, 
by \eqref{L_bound} and \eqref{L_diff_bound} we have 
 $\sup_{s \in [0,t]} |L'(s)L(s)| = O(t^{-1/6})$, and also  $|L'(0)f(0)| = O(t^{-1/6})$. 

By the definition of $L$ in~\eqref{L_defn}, for $s\in [0,t]$ we have
\[
|L'' (s)|L(s)= t^{-4/3} \left(\frac{1+\beta}{u_t}\right)^{-4/3}\left|l''\left( \frac{u_t (s+\beta t)}{t+\beta t}\right)\right| l\left( \frac{u_t (s+\beta t)}{t+\beta t}\right). 
\]
For $\frac{u_t \beta}{1+\beta}\leq u \leq u_t$ we also have $|l'(u)|\le 2 t^{1/6}$ by \eqref{l_diff_bound} and $2t^{-1/12} \le l(u)\le c+\alpha $ by the definition of $u_t$. By differentiating \eqref{l_diff} we obtain that, uniformly over $u$ in the above range, 
\[
|l''(u)|l(u)\leq \frac{2}{9}c u^{-5/3}l(u)+\frac{\pi^2}{\sqrt 2}\frac{|l'(u)|}{l(u)^2} = O(t^{1/3})\, , 
\]
and hence $\sup_{s \in [0,t]} |L''(s)|L(s)=O(t^{-1})$. Finally, $\sup_{s \in [0,t]} f''(s)=\sup_{s \in [0,t]} \frac{2}{9}c(s+\beta t)^{-5/3} = O(t^{-5/3})$, which with \eqref{L_bound} yields $\sup_{s \in [0,t]}|f''(s)| L(s)=O(t^{-4/3})$. Thus, Assumption (A) holds for some fixed constant $Q$ not depending on $t$.

Having verified the conditions of Lemma~\ref{roberts_lemma}, we now show that the bound from that lemma indeed implies (\ref{eq:toprove_fin}). 

For $0\leq u \leq t$, $f'(u)=\sqrt 2 - \tfrac{1}{3}c(s+\beta t)^{-2/3}$ so
\begin{align*}
-\tfrac{1}{2}\int_0^u f'(s)^2 \, ds&=-u+\sqrt 2 c(u+\beta t)^{1/3}-\sqrt 2 c (\beta t)^{1/3} +\tfrac{1}{3}c^2(u+\beta t)^{-1/3}-\tfrac{1}{3}c^2(\beta t)^{-1/3}\\
&\le -u+\sqrt 2 c u^{1/3}+O(t^{-1/3}). 
\end{align*}
Also by the definition of $L$ in \eqref{L_defn},
\[
\frac{\pi^2}{2}\int_0^u \frac{1}{L(s)^2}\, du =\frac{\pi^2}{2}t^{1/3}\left(\frac{1+\beta}{u_t}\right)^{1/3}\left(\int_{\frac{\beta u_t}{1+\beta}}^{\frac{(u+\beta t)u_t}{t+\beta t}}\frac{1}{l(s)^2}\, ds\right).
\]
We chose $\beta$ sufficiently small that for $s\leq \frac{\beta u_t}{1+\beta}$, $l(s)\ge \alpha/2$. Therefore
\[
 \int^{\frac{\beta u_t}{1+\beta}}_0\frac{1}{l(s)^2}\, ds \leq \frac{4}{\alpha^2}\frac{\beta u_t}{1+\beta}\le \frac{\alpha}{2},
 \]
since we also chose $\beta < \alpha^3/8$.
It follows by \eqref{l_defn} that
\begin{align*}
\frac{\pi^2}{2}\int_0^u \frac{1}{L(s)^2}\, du &\ge 
\sqrt 2 t^{1/3} \left(\frac{1+\beta}{u_t}\right)^{1/3}\left(\frac{\alpha}{2}+c \left( \frac{(u+\beta t)u_t}{t+\beta t}\right)^{1/3}- l \left(\frac{(u+\beta t)u_t}{t+\beta t} \right) \right)\\ 
&\ge (\sqrt 2/2) \alpha t^{1/3}+\sqrt 2 c u^{1/3}-\sqrt 2 L(u),
\end{align*}
where the second line follows by the definition of $L$ in \eqref{L_defn}.
Hence for $u\leq t$ and $p\in [0,1]$,
\begin{align} \label{roberts_bound}
&\exp(-\tfrac{1}{2}\int_0^u f'(s)^2 \, ds - \int_0^u \frac{\pi^2}{2L(s)^2}\, ds -p f'(u)L(u)-f'(0)f(0)+\tfrac{1}{2}\log L(u)) \notag \\
&\le \exp(-u-(\sqrt 2/2)\alpha t^{1/3}+\sqrt 2 (1-p)L(u)+\sqrt 2 c \beta^{1/3} t^{1/3} +O(t^{1/6}))\nonumber\\ 
&\le (M(Q))^{-1}\exp (-u-t^{1/3}/K +\sqrt 2 (1-p)L(u)). 
\end{align}
The last inequality holds for all large $t$, provided $K$ is sufficiently large that $(\sqrt{2}/2)\alpha-\sqrt{2}c\beta^{1/3}>1/K$; this is possible by our choice of $\beta$. 

Writing $p=x/L(u)$, then 
\begin{align*}
& \p{B(s) \in (f(s),f(s)+L(s)) \, \forall s\leq u, B(u)>f(u)+x}\\
= & \p{B(s)-f(s)\in (0,L(s))\forall s\leq u, B(u)-f(u)\in (pL(u),L(u))} \\
&\leq \exp (-u-t^{1/3}/K +\sqrt 2 (1-p)L(u))
\end{align*}
for some $K>0$ by Lemma \ref{roberts_lemma} and \eqref{roberts_bound}. This establishes (\ref{eq:toprove_fin}) and completes the proof. 
\end{proof}

We now turn to the proof of Lemma~\ref{lem:gb}, during which we will use the following fact. 
\begin{fact}\label{fac:gauss}
Let $(W(u),0 \le u \le 1)$ be either Brownian excursion or Brownian meander, and let $N$ be a standard Gaussian. Then 
\[
\p{\max_{u \le 1} W(u) \ge x} \le 4 \p{N \ge x/4}. 
\]
\end{fact}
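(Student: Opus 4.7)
The plan is to reduce both cases (meander and excursion) to Gaussian tail bounds via classical path transformations of Brownian motion. For the Brownian excursion, I would invoke Vervaat's theorem, which realizes $e$ as a standard Brownian bridge $b$ cyclically rerooted at its minimum. Under this realization,
\[
\max_{0 \le u \le 1} e(u) = \max_{0 \le u \le 1} b(u) - \min_{0 \le u \le 1} b(u),
\]
so a union bound combined with the reflection-principle identity $\mathbf{P}\{\max b \ge a\} = e^{-2a^2}$ and its counterpart for $-b$ gives $\mathbf{P}\{\max e \ge x\} \le 2 e^{-x^2/2}$. This is easily dominated by $4\mathbf{P}\{N \ge x/4\}$: for small $x$ the right-hand side exceeds $1$ and the claim is trivial, while for $x$ bounded away from $0$ the exponent $-x^2/2$ is much stronger than the required $-x^2/32$.

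For the Brownian meander $M$, I would use that $M(1)$ has the Rayleigh density $y e^{-y^2/2}\mathbf{1}_{\{y>0\}}$ and, conditional on $M(1)=y$, $M$ is distributed as a three-dimensional Bessel bridge from $0$ to $y$; equivalently, $M(u) = \|\beta(u)\|$ for $\beta$ a three-dimensional Brownian bridge from $0$ to $(y,0,0)$. Decomposing $\beta = (uy + \gamma_1, \gamma_2, \gamma_3)$ with $\gamma_1,\gamma_2,\gamma_3$ independent one-dimensional Brownian bridges from $0$ to $0$, the triangle inequality yields the pathwise bound
\[
M(u) \le u\, M(1) + |\gamma_1(u)| + |\gamma_2(u)| + |\gamma_3(u)|,
\]
so that $\max_u M(u) \le M(1) + \sum_{i=1}^3 \max_u |\gamma_i(u)|$. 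Applying the reflection identity $\mathbf{P}\{\max|\gamma_i|\ge a\}\le 2e^{-2a^2}$ to each bridge maximum and $\mathbf{P}\{M(1) \ge a\}=e^{-a^2/2}$ to the Rayleigh term, a well-chosen split of $x$ across these four contributions yields the claim.

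The main obstacle lies in the meander case: the Rayleigh tail $e^{-y^2/2}$ lacks the polynomial prefactor $1/y$ present in the Gaussian tail $\mathbf{P}\{N\ge y\}$, so taking $y=x/4$ in the Rayleigh part gives matching exponents $-x^2/32$ but not the correct $1/x$ decay. The factor $4$ appearing both inside and outside the target $4\mathbf{P}\{N\ge x/4\}$ is calibrated to absorb this: for $x$ below a universal threshold one has $4\mathbf{P}\{N\ge x/4\}\ge 1 \ge \mathbf{P}\{\max M \ge x\}$ trivially, while for $x$ above the threshold the Gaussian comparison can be verified either by a numerical bookkeeping or, more robustly, by sharpening the Rayleigh step using that the event $\{\max M \ge x\}$ entails first-hitting of $x$ at some time $\tau < 1$, after which the Bessel drift pushes $M(1)$ to remain large with high probability, thereby inserting the missing $1/x$ factor.
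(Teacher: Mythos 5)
Your excursion argument is sound and close to the paper's; both start from Vervaat's identity $\max e\eqdist\max b-\min b$, and the only real difference is that the paper prefers the pathwise bound $\max b-\min b\le 2\max|b|\le 4\max|B|$ and reflection, which gives the inequality in one stroke and avoids the small-$x$/large-$x$ case split your union-bound route needs.

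The meander case, however, contains a genuine gap. Your reduction to the three-dimensional Bessel bridge plus the triangle inequality is fine and yields $\max_u M(u)\le M(1)+\sum_{i=1}^3\max|\gamma_i|$ pathwise. But the balanced split $a_i=x/4$ produces the bound $e^{-x^2/32}+6e^{-x^2/8}$, whose leading term lacks the $1/x$ prefactor of $4\p{N\ge x/4}\sim\tfrac{16}{\sqrt{2\pi}\,x}e^{-x^2/32}$, so the claimed inequality actually \emph{fails} once $x\gtrsim 16/\sqrt{2\pi}$; the \emph{numerical bookkeeping} option therefore confirms the defect rather than repairing it. The \emph{Bessel drift} sharpening is not an argument as stated: conditional on the path hitting $x$ before time $1$, a Bessel-$3$ trajectory subsequently drops below $x/2$ with probability close to $1/2$ (the $1/Z$ scale function makes this explicit), so $M(1)$ does not stay large with high probability, and in any case no quantitative estimate is supplied. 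The easiest repair within your framework is an \emph{unbalanced} split, say $a_0=x/2$ for $M(1)$ and $a_i=x/6$ for the bridges, which gives the Rayleigh piece exponent $-x^2/8$, strictly stronger than $-x^2/32$, so the missing polynomial factor is absorbed by the exponential gap. The paper sidesteps the prefactor issue altogether by a cleaner device: $\max_{u\le 1}B^{\mathrm{me}}(u)$ is stochastically dominated by $\max_{u\le 1}Z(u)$ for a Bessel-$3$ process $Z$, and Pitman's $2M-B$ theorem gives the pathwise bound $\max_{u\le 1}Z(u)\eqdist\max_{u\le 1}|B(u)-2\inf_{t\le u}B(t)|\le 3\max_{u\le 1}|B(u)|$, which reduces the claim to the reflection principle for standard Brownian motion with the full Gaussian tail, $1/x$ factor included, carried through automatically.
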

\begin{proof} [Proof of Fact \ref{fac:gauss}]
Write $B$, $B^{\mathrm{me}}$, $B^{\mathrm{ex}}$ and $B^{\mathrm{br}}$ for Brownian motion, meander, excursion, and bridge, all of length one. 
In what follows, maxima are always over $u \in [0,1]$ even if this is not explicitly written. 

We have 
\[
\max B^{\mathrm{ex}} 
\eqdist
\max B^{\mathrm{br}} - \min B^{\mathrm{br}}
\le 
2\max |B^{\mathrm{br}}|
\eqdist
2\max_{u \le 1} |B(u)-uB(1)| \le 4\max |B|,
\]
so by the reflection principle, 
\[
\p{\max B^{\mathrm{ex}} \ge x} \le 
\p{\max|B| \ge x/4} \le 2 \p{\max B \ge x/4} = 4 \p{N \ge x/4}\, .
\]

Next, let $(Z(u),u \ge 0)$ be a standard Bessel process. 
Since $B^{\mathrm{me}}$ is Brownian motion conditioned to stay positive until time one, and $Z$ is Brownian motion conditioned to stay positive for all time, it follows straightforwardly that $\max_{u \le 1} Z(u)$ stochastically dominates $\max B^{\mathrm{me}}$. 
By Pitman's $2M-B$ theorem (see \cite{mp10}), 
we have 
\[
\max_{u \le 1} Z(u) \eqdist \max_{u \le 1} |B(u)-2\inf_{t \le u} B(t)| 
\le 3 \max |B|, 
\]
and it follows as above that 
\[
\p{\max B^{\mathrm{me}} \ge x} \le 
\p{\max |B| \ge x/3} \le 4 \p{N \ge x/3}\, .\qedhere
\]
\end{proof}
\begin{proof}[Proof of Lemma \ref{lem:gb}]
First, for a standard Gaussian $N$ we have $\p{N \ge c} \le \frac{1}{\sqrt{2\pi}}\frac{1}{c} e^{-c^2/2}$, for all $c > 0$. 
Using this bound, Fact~\ref{fac:gauss} and Brownian scaling, for each $i$ we obtain 
\[
\p{\max_{u \le t_i} X_i(u) \ge x} \le 
4\p{N \ge \frac{x}{4t_i^{1/2}}} \le \frac{1}{\sqrt{2\pi}}\frac{16t_i^{1/2}}{x} e^{-x^2/16t_i} < \frac{8t_i^{1/2}}{x} e^{-x^2/16t_i} \, .
\]
Provided that $x \ge 8T ^{1/2}$, a union bound then yields
\[
\p{\max_{i \ge 1} \max_{u \le t_i} X_i(u) \ge x}
\le \sum_{i \ge 1} e^{-x^2/16t_i}. 
\]
Finally, the function $f(a)=e^{-x^2/a}$ is convex for $a\in [0,x^2/2]$, and $f(0)=0$, so if $x^2 \geq 32T$ then for each $i$, $f(16t_i)\leq (t_i/T) f(16T)$. Hence 

\[
\p{\max_{i \ge 1} \max_{u \le t_i} X_i(u) \ge x}
\le  e^{-x^2/16T}. \qedhere
\]
\end{proof}

\section{Probability tail bounds}
We first state a Bernstein-type inequality due to Colin McDiarmid. 
\begin{thm}[\cite{mcdiarmid98}, Theorem 2.7] \label{thm:bern}
Let $X_1,\ldots,X_n$ be independent with $X_k - \e X_k \le 1$ for each $k$. Write $S_n=\sum_{k=1}^n X_k$ and 
fix $V \ge \V{S_n}= \sum_{k=1}^n \V{X_i}$. Then for any $c \ge 0$, 
\[
\p{S_n \ge \e S_n + c} \le e^c \cdot \pran{\frac{V}{V+c}}^{V+c} < \pran{\frac{eV}{c}}^c\, .
\]
\end{thm}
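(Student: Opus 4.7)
The plan is to apply the classical exponential-moment (Chernoff) method combined with a Bennett-type bound on $\E{e^{tY}}$ for a centered variable $Y \le 1$.

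First I would center the summands: writing $Y_k = X_k - \e X_k$, we have $Y_k \le 1$, $\e Y_k = 0$, and $\V{Y_k}=\V{X_k}$, and it suffices to bound $\p{\sum_k Y_k \ge c}$. The workhorse is the elementary inequality that for all $y \le 1$ and $t \ge 0$,
\[
e^{ty} \le 1 + ty + y^2(e^t - 1 - t),
\]
which is equivalent to the monotonicity on $\R$ of the function $y \mapsto (e^{ty} - 1 - ty)/y^2$ (extended continuously to the value $t^2/2$ at $y=0$). Taking expectations against a centered $Y \le 1$ of variance $\sigma^2$ and using $1 + x \le e^x$ then yields $\E{e^{tY}} \le \exp(\sigma^2 (e^t - 1 - t))$.

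Next, by independence,
\[
\E{e^{t \sum_k Y_k}} = \prod_k \E{e^{tY_k}} \le \exp\pran{(e^t - 1 - t) \V{S_n}} \le \exp\pran{(e^t - 1 - t)V},
\]
where the last bound uses $e^t - 1 - t \ge 0$ together with $V \ge \V{S_n}$. Markov's inequality then gives $\p{S_n - \e S_n \ge c} \le \exp(-tc + V(e^t - 1 - t))$ for each $t \ge 0$. Optimizing with $t = \log(1 + c/V)$ (the minimizer, obtained from $-c + V(e^t - 1) = 0$) yields
\[
\p{S_n \ge \e S_n + c} \le \exp\pran{c - (V+c)\log(1 + c/V)} = e^c \pran{\frac{V}{V+c}}^{V+c},
\]
which is the first claimed bound.

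The strictly weaker inequality $(eV/c)^c$ follows from $V^V c^c \le (V+c)^{V+c}$, which is immediate since $\log$ is increasing: $(V+c) \log(V+c) \ge V \log V + c \log c$. Rearranging,
\[
e^c \pran{\frac{V}{V+c}}^{V+c} = \frac{e^c V^{V+c}}{(V+c)^{V+c}} \le \frac{e^c V^c}{c^c} = \pran{\frac{eV}{c}}^c.
\]
The only nontrivial ingredient is the monotonicity lemma underlying the Bennett-type MGF bound in the second paragraph; once it is in hand, the rest is the standard Chernoff machinery together with a one-variable calculus optimization.
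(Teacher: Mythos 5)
The paper itself gives no proof of this statement — it simply imports it from McDiarmid's survey, remarking that the second inequality is ``easy.'' Your argument is correct and is essentially the canonical proof of this Bennett/Bernstein bound, matching what one finds in the cited reference. The exponential-moment lemma $\E{e^{tY}}\le\exp\bigl(\sigma^2(e^t-1-t)\bigr)$ for a centered $Y\le 1$ is the right workhorse; your justification via the monotonicity on all of $\R$ (not merely $(0,\infty)$) of $u\mapsto(e^u-1-u)/u^2$ is correct and is precisely the point one must be careful about, since $Y$ may take negative values. The Chernoff optimization at $t=\log(1+c/V)$ is carried out correctly, and the passage to $(eV/c)^c$ via $V^V c^c \le (V+c)^{V+c}$ is the standard simplification. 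One cosmetic remark: the theorem asserts strict inequality between the two right-hand sides, which your argument delivers only when $V>0$ and $c>0$ (at $c=0$ both sides equal $1$); this is harmless, since the paper's applications take $c>0$.
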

The first inequality is the heart of the theorem; the second is easy and is included to simplify an application of the theorem. 
The next lemma provides upper tail probability estimates for weighted geometric sums. 
\begin{lem}\label{lem:geom_ld}
Fix $\eps < 1/2$ and let $(G_i,i \ge 1)$ be i.i.d.\  $\mathrm{Geom}(1-\eps)$. For any $n$ and any non-negative real numbers $r_1,\ldots,r_n$ with $\max r_i /\sum r_i \le 1/V$, 
for all $\delta > 0$, 
\[
\p{\sum_{i=1}^n r_i G_i \ge (1+\delta)\sum_{i=1}^n r_i} \le 2(2^{1+\delta}\eps^{\delta})^V\, .
\]
\end{lem}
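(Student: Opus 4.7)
The plan is to apply a Chernoff-type bound using the explicit moment generating function of the geometric distribution, and to exploit the constraint $\max_i r_i/\sum_i r_i\le 1/V$ by a convexity argument on the log-mgf.

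First, I would compute that for $G\sim\mathrm{Geom}(1-\eps)$ and $\theta\in[0,\log(1/\eps))$, the log-mgf is
\[
h(\theta) := \log\e e^{\theta G} = \log\frac{(1-\eps)e^\theta}{1-\eps e^\theta}.
\]
A direct differentiation gives $h''(\theta)=\eps e^\theta/(1-\eps e^\theta)^2>0$, so $h$ is convex on its domain, and $h(0)=0$. The chord bound then yields $h(\theta)\le (\theta/\lambda)\, h(\lambda)$ for all $\theta\in[0,\lambda]$, for any fixed $\lambda$ in the domain.

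Second, I would set $\lambda=\log(1/(2\eps))$ (positive since $\eps<1/2$) and $\theta=\lambda/\max_i r_i$, so that $\theta r_i\le\lambda$ for all $i$. Independence of $(G_i)$ and the chord bound give
\[
\log\e e^{\theta\sum_i r_iG_i}=\sum_i h(\theta r_i)\le\frac{h(\lambda)}{\lambda}\sum_i\theta r_i=\frac{\theta R\, h(\lambda)}{\lambda},
\]
writing $R=\sum_i r_i$. Applying Markov's inequality,
\[
\p{\textstyle\sum_i r_iG_i\ge(1+\delta)R}\le\exp\!\Bigl(\theta R\bigl(h(\lambda)/\lambda-(1+\delta)\bigr)\Bigr).
\]
The hypothesis $\max_i r_i\le R/V$ yields $\theta R\ge \lambda V$. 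When the bracketed quantity is negative, multiplying by the larger positive $\theta R$ only makes it more negative, so the exponent is at most $V(h(\lambda)-(1+\delta)\lambda)$.

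Third, a direct computation with $\lambda=\log(1/(2\eps))$ gives $h(\lambda)=\log((1-\eps)/\eps)$, so
\[
h(\lambda)-(1+\delta)\lambda=\log\bigl((1-\eps)\cdot 2^{1+\delta}\eps^\delta\bigr)\le\log(2^{1+\delta}\eps^\delta).
\]
This proves the bound $(2^{1+\delta}\eps^\delta)^V\le 2(2^{1+\delta}\eps^\delta)^V$ in the case where $h(\lambda)-(1+\delta)\lambda<0$. In the remaining case $2^{1+\delta}(1-\eps)\eps^\delta\ge 1$, we have $2(2^{1+\delta}\eps^\delta)^V\ge 1$ and the stated inequality is trivial. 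I do not expect major obstacles; the only real points are to verify the convexity of $h$ and to check the routine algebra at $\lambda=\log(1/(2\eps))$. The key idea, which drives the appearance of $V$ in the exponent, is that the chord bound reduces the sum $\sum_i h(\theta r_i)$ to something proportional to $\theta R$, and the bound $\theta R\ge\lambda V$ then converts this into $V$ times a constant.
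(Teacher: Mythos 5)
Your proof is correct, and it is essentially the same Chernoff argument as the paper's, with the same effective tilt ($e^\lambda = 1/(2\eps)$ corresponds to the paper's choice $e^c = (2\eps)^{-1}$); the difference lies in how the constraint $\max_i r_i/\sum_i r_i\le 1/V$ is exploited. The paper centers the geometrics, normalizes to weights $p_i$ summing to one, and argues that $\prod_i(1-\eps)/(1-\eps e^{cVp_i})$ is maximized when $\lceil V\rceil$ of the $p_i$ equal $1/\lceil V\rceil$ — a somewhat informal extreme-point argument over the polytope $\{0\le p_i\le 1/V,\ \sum p_i=1\}$ that needs a little care when $V$ is not an integer. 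Your chord bound $h(\theta)\le(\theta/\lambda)h(\lambda)$ for the convex log-mgf through the origin does the same job cleanly and uniformly in $V$, turning $\sum_i h(\theta r_i)$ into $\theta R\,h(\lambda)/\lambda$ and then using $\theta R\ge\lambda V$; this sidesteps the extremal optimization entirely. You also correctly dispose of the case where the Chernoff exponent is nonnegative, in which the stated bound is trivially at least $1$. The computation $h(\lambda)-(1+\delta)\lambda=\log\bigl((1-\eps)2^{1+\delta}\eps^\delta\bigr)$ matches the paper's final bound $2\bigl(2^{1+\delta}(1-\eps)\eps^\delta\bigr)^V<2(2^{1+\delta}\eps^\delta)^V$.
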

\begin{proof}[Proof of Lemma~\ref{lem:geom_ld}]
Let $\hat{G}_j=G_j-1$ and $p_j=r_j/\sum_i r_i$.  Then we must bound 
\[
\p{\sum_{i=1}^n p_i \hat{G_i} \ge \delta}\, ,
\]
under the assumption that $\max_i p_i \le 1/V$. First note that for $c$ such that $\eps e^c<1$,
\begin{align*}
\E{\exp\pran{cV\cdot \sum_{i=1}^n p_i \hat{G_i}}} & = \prod_i \frac{ 1-\eps}{1-\eps e^{cVp_i}}\, .
\end{align*}
For $c>0$, the latter product is maximized (subject to the constraints that $\max_i p_i \le 1/V$ and that $\sum_i p_i =1$) when 
$p_i=1/\lceil V \rceil$ for $\lceil V \rceil$ values of $i$ and $p_i=0$ otherwise. 
We thus obtain 
\[
\E{\exp(cV\cdot \sum_{i=1}^n p_i \hat{G_i})} \le \frac{(1-\eps)^{\lceil V \rceil}}{(1-\eps e^c)^{\lceil V \rceil}} 
\,. 
\]
For any non-negative random variable, $\p{X> \delta} \le e^{-c\delta V}\e{e^{cVX}}$; 
taking $e^c=(2\eps)^{-1}$ yields
\[
\p{\sum_{i=1}^n p_i \hat{G_i} \ge \delta} \le e^{-c\delta V} \cdot 
\frac{(1-\eps)^V}{(1-\eps e^c)^{V+1}} = 
2\pran{2(1-\eps)(2\eps)^{\delta}}^V < 2(2^{1+\delta}\eps^{\delta})^V\, .\qedhere
\]
\end{proof}

\small 


\normalsize
\vspace{-0.2cm}
\end{document}